\numberwithin{equation}{section}
\normalfont\fontsize{13}{14}\selectfont\itshape}{\thesubsubsection}{1em}{}
\newcommand\scaleddot{\scalebox{.89}{.}}
\renewcommand{\ddddot}[1]{%
  {\mathop{\kern\z@#1}\limits^{\makebox[0pt][c]{\vbox to-2\ex@{\kern-\tw@\ex@\hbox{\normalfont\scaleddot\kern-0.5pt\scaleddot\kern-0.5pt\scaleddot\kern-0.5pt\scaleddot}\vss}}}}}
\newcommand{\bl}{[{\mskip-2mu}[}
\newcommand{\br}{]{\mskip-2mu}]}
\newcommand{\abs}[1]{\lvert#1\rvert}
\newcommand\norm[1]{\left\lVert#1\right\rVert}
\newcommand{\R}{\mathbb{R}}
\newcommand{\Z}{\mathbb{Z}}
\newcommand{\ee}{\mathrm{e}}
\newcommand{\ii}{\mathrm{i}}
\newcommand{\ds}{\, \mathrm{d}s}
\newcommand{\dt}{\, \mathrm{d}t}
\newcommand{\dx}{\, \mathrm{d}x}
\newcommand{\dy}{\, \mathrm{d}y}
\newcommand{\dz}{\, \mathrm{d}z}
\newcommand{\DD}{{\mathcal D}}
\newcommand{\ZZ}{{\mathcal Z}}
\DeclareMathOperator{\cosech}{cosech}
\DeclareMathOperator{\cosec}{cosec}
\newtheorem{theorem}{Theorem}[section]
\newtheorem{lemma}[theorem]{Lemma}
\newtheorem{proposition}[theorem]{Proposition}
\newtheorem{corollary}[theorem]{Corollary}
\newtheorem{remarks}[theorem]{Remark}
\newcounter{count}
\title{A resonant Lyapunov centre theorem with \\an application to doubly periodic\\ travelling hydroelastic waves}
\author{R. Ahmad\thanks{FR Mathematik, Universit\"{a}t des Saarlandes, Postfach 151150, 66041 Saarbr\"{u}cken, Germany}
\and
M. D. Groves\footnotemark[1]
\and
D. Nilsson\thanks{Centre for Mathematical Sciences, Lund University, Box 118, 221 00 Lund, Sweden}}
\date{}
\begin{document}

\maketitle

\begin{abstract}
We present a Lyapunov centre theorem for an antisymplectically reversible Hamiltonian system exhibiting a nondegenerate
$1:1$ or $1:-1$ semisimple resonance as a detuning parameter is varied.
The system can be finite- or infinite dimensional (and quasilinear) and have a non-constant symplectic structure. We allow the 
origin to be a `trivial' eigenvalue arising from a translational symmetry or, in an infinite-dimensional setting, to lie in the continuous
spectrum of the linearised Hamiltonian vector field provided a compatibility condition on its range is satisfied.

As an application we show how Kirchg\"{a}ssner’s spatial dynamics approach can be used to construct doubly periodic travelling
waves on the surface of a three-dimensional body of water (of finite or infinite depth) beneath a thin ice sheet (`hydroelastic waves'). The
hydrodynamic problem is formulated as a reversible Hamiltonian system in which an arbitrary horizontal spatial direction is the time-like variable
and the infinite-dimensional phase space consists of wave profiles which are periodic (with fixed period) in a second, different horizontal direction.
Applying our Lyapunov centre theorem at a point in parameter space associated with a $1:1$ or $1:-1$ semisimple resonance yields
a periodic solution of the spatial Hamiltonian system corresponding to a doubly periodic hydroelastic wave.
\end{abstract}

\section{Introduction}

\subsection{Resonant Hamiltonian systems} \label{Overview}

A linear dynamical system
$$\dot{v}=Lv$$
for which $L \in {\mathbb R}^{2n \times 2n}$ has a pair of simple, purely imaginary eigenvalues $\pm \ii \omega$ has a periodic orbit with frequency $\omega$.
The classical \emph{Lyapunov centre theorem} asserts that a (smooth) nonlinear perturbation
\begin{equation}
\dot{v} = Lv + N(v) \label{intro - DS}
\end{equation}
of this dynamical system has a family of small-amplitude periodic solutions with frequency near $\omega$
provided that (i) it is Hamiltonian or reversible, and (ii) the non-resonance condition that $\ii n \omega$ is not
an eigenvalue for any integer $n \neq \pm 1$ is satisfied (see Kielh\"{o}fer \cite[\S\S I.11.1--I.11.2]{Kielhoefer}).
The theorem can be extended to infinite-dimensional, quasilinear systems under the nonresonance condition
that $\ii n \omega \not\in \sigma(L)$ for $n \neq \pm 1$,  and furthermore the condition that $0 \not\in \sigma(L)$ can also
be replaced by a compatibility condition on the range of $L$ (see Iooss \cite{Iooss99}).

In this article we consider a parameter-dependent evolutionary equation of the form
$$v_t = L^{\mu_1}v + N^{\mu_1}(v)$$
which is both reversible and Hamiltonian; it may be finite- or infinite-dimensional (and quasilinear).
The linear operator $L^{\mu_1}$ is supposed to have two pairs
$\pm \ii \kappa_1^{\mu_1}$, $\pm \ii \kappa_2^{\mu_1}$ of simple purely imaginary eigenvalues
which depend smoothly upon $\mu_1$ and collide with `non-zero speed' at $\mu_1=0$, that is
$$\kappa_1^0=\kappa_2^0, \qquad
\frac{\mathrm{d}}{\mathrm{d}\mu_1} (\kappa_1^{\mu_1}-\kappa_2^{\mu_1})\Big|_{\mu_1=0} \neq 0.$$
The collision is semisimple, that is at criticality the eigenvalues $\pm \ii \kappa$, where $\kappa=\kappa_1^0=\kappa_2^0$,
are geometrically and algebraically double. We assume the nonresonance condition $\ii n \kappa \not\in \sigma(L^0)$ for $n=\pm 2,\pm 3,\ldots$
but allow the origin to be a `trivial' eigenvalue arising from a translational symmetry or, in an infinite-dimensional setting, to lie in the continuous
spectrum of $L$ provided a compatibility condition on its range is satisfied. The result is a two-parameter family
$\{v(t_1,t_2),\mu_1(t_1,t_2)\}_{0 \leq t_1,t_2 < \varepsilon}$ of $2\pi/(\kappa +\mu_2(t_1,t_2))$-periodic reversible solutions,
where $\mu_1(t_1,t_2), \mu_2(t_1,t_2) \rightarrow 0$ along with the amplitude of the solutions as $(t_1,t_2) \rightarrow (0,0)$.

A classical Hamiltonian system with $n$ degrees of freedom has the form
$$\dot{v}=J\nabla H(v), \qquad J = \begin{pmatrix} 0 & I \\ -I & 0 \end{pmatrix},$$
where the Hamiltonian $H$ is a smooth real-valued function of $v \in {\mathbb R}^{2n}$ and $0$, $I$ denote the $n \times n$ zero and identity matrices;
in the above notation $L=J \nabla H_2$, where $H_2(v)=\frac{1}{2}\mathrm{d}^2H[0](v,v)$. Examining a 
two-degree-of-freedom Hamiltonian system whose
linearisation has two semisimple eigenvalues $\pm \ii \kappa$, one finds that
$H_2(q_1,q_2,p_1,p_2)=\frac{1}{2}\kappa(q_1^2+p_1^2) \pm \frac{1}{2}\kappa(q_2^2+p_2^2)$, the two cases of which are referred to as a
$1:1$ and $1:-1$ resonance respectively. (This terminology is also applied to higher-order Hamiltonian systems with no other eigenvalue resonances
by restricting to the eigenspaces corresponding to $\pm \ii \kappa$.)
Periodic solutions of two-degree-of-freedom Hamiltonian systems with semisimple $1:1$ and $1:-1$ resonances were studied by Kummer \cite{Kummer76,Kummer78},
while the corresponding non-semisimple `Hamiltonian-Hopf' resonance (in which $\pm \ii \kappa$ are geometrically simple and algebraically double eigenvalues) was studied by van der Meer \cite{Vandermeer}.

The dynamical system \eqref{intro - DS} is \emph{reversible} if there exists an involution $R$ (the `reverser') which anticommutes with $L$ and $N$. A reversible
system has the property that $Ru$ is a solution whenever $u$ is a solution; a solution $u$ is termed \emph{reversible} or \emph{symmetric} if
it is invariant under $R$. Reversible Hamiltonian systems have the property that either $H(Rv)=H(v)$ or $H(Rv)=-H(v)$; these cases are referred to
as `antisymplectic' and `symplectic' respectively. Small-amplitude periodic solutions of symplectically reversible, $n$-degree-of-freedom Hamiltonian systems
which exhibit a semisimple $1:-1$ resonance were studied by Alomair \& Montaldi \cite{AlomairMontaldi17} (see also
Buzzi \& Lamb \cite{BuzziLamb05a}) using Lyapunov-Schmidt reduction.

In the present paper we consider parameter-dependent, antisymplectically reversible Hamiltonian systems of the form
\begin{equation}\label{intro_main_eq}
v_t = J^{\mu_1}(v)\nabla H^{\mu_1}(v)
\end{equation}
which exhibit a semisimple $1:1$ or $1:-1$ resonance in the eigenvalues $\pm \ii \kappa$ when $\mu_1=0$. The system may be finite- or infinite-dimensional, and 
$J^{\mu_1}(v)$ is an invertible linear operator which is skew-symmetric with respect to a suitable inner product $\langle \cdot\,,\cdot \rangle$ and,
as the notation indicates, is not necessarily constant (see below for a precise statement).
We look for periodic solutions of \eqref{intro_main_eq} with frequency near $\kappa$ by writing
$$v(t)=u(\tau), \qquad \tau = (\kappa +\mu_2)t$$
and seeking $2\pi$-periodic solutions of the transformed equation. For this purpose we use variational Lyapunov-Schmidt reduction,
seeking critical points of the functional
$$
S(u,\mu_1,\mu_2)=\frac{1}{2\pi}\int_0^{2\pi}\bigg\{-(\kappa +\mu_2)\langle\alpha^{\mu_1}(u),u_\tau\rangle -H^{\mu_1}(u)\bigg\}\ \mathrm{d}\tau,
$$
where $\alpha^{\mu_1}(v)$ is an anti-derivative of $J^{\mu_1}(v)$, in a suitable `loop space'. A precise statement of our theorem is given in
Section \ref{Intro - theorem} below; the proof is presented in Section \ref{proof of LCT}.

\subsubsection*{Hamiltonian formalism}

Let $X$, $Z$ be real Hilbert spaces, where $X$ is continuously and densely embedded in $Z$, and 
$Z$ is equipped with an additional continuous inner product $\langle \cdot\,,\cdot \rangle$ which does not necessarily induce its strong topology.
Let $\Lambda_1 \times U$ be a neighbourhood of the origin in ${\mathbb R} \times X$.
We regard $U$ as a manifold domain of $Z$ by extending elements of the tangent space $TX|_v \cong X^\ast \cong X$
of $X$ at the point $v \in U$ to elements of the tangent space $TZ|_v \cong Z^\ast \cong Z$ of $Z$ at this point. The derivative
${\mathrm{d}F}^{\mu_1}[v] \in X^\ast$ of a smooth real-valued function $F^{\mu_1}(v)$ of $({\mu_1},v) \in \Lambda_1 \times U$ 
has a unique extension $\widetilde{\mathrm{d}F}{\vphantom{}}^{\mu_1}[v] \in Z^\ast$.
We use the same notation for smooth functions $F^{\mu_1}(v)$ of $({\mu_1},v)$ with values in $X$, so that $\widetilde{\mathrm{d}F}\vphantom{}^{\mu_1}[v] \in {\mathcal L}(Z)$, and occasionally assume that there exists an adjoint operator
$\widetilde{\mathrm{d}F}\vphantom{}^{\mu_1}[v]^\ast \in {\mathcal L}(Z)$ such that
$$\langle \widetilde{\mathrm{d}F}\vphantom{}^{\mu_1}[v]^\ast(v_1), v_2 \rangle = \langle v_1, \widetilde{\mathrm{d}F}\vphantom{}^{\mu_1}[v](v_2)\rangle$$
for all $(\mu_1,v) \in \Lambda_1 \times U$ and $v_1$, $v_2 \in Z$. Both $\widetilde{\mathrm{d}F}\vphantom{}^{\mu_1}[v]$ and
$\widetilde{\mathrm{d}F}\vphantom{}^{\mu_1}[v]^\ast$ are assumed to
depend smoothly upon $({\mu_1},v) \in \Lambda_1 \times U$.

Using this framework we make the following definitions and assumptions.
\begin{itemize}
\item[(i)]
\emph{A (parameter-dependent) $k$-form on $U$} is an alternating, bounded, $k$-linear mapping\linebreak
$Z^k \rightarrow {\mathbb R}$
which depends smoothly upon $({\mu_1},v) \in \Lambda_1 \times U$. An \emph{exact symplectic $2$-form}
$\Omega^{\mu_1}|_v$ on $U$ is a $2$-form
given by
$$
\Omega^{\mu_1}|_v(v_1,v_2) = \langle J^{\mu_1}(v) v_1,v_2 \rangle, \label{First formula for Omega}
$$
where $J^{\mu_1}(v)$ is an invertible, skew-symmetric linear mapping in ${\mathcal L}(Z)$ which depends smoothly upon $({\mu_1},v) \in \Lambda_1 \times U$. Furthermore
$\Omega^{\mu_1}|_v$ is the exterior derivative of a $1$-form $\omega^{\mu_1}|_v$ given by
$$\omega^{\mu_1}|_v(w) = \langle \alpha^{\mu_1}(v),w \rangle,$$
where $\alpha^{\mu_1}(v)$ is an element of $Z$ which depends smoothly upon $({\mu_1},v) \in \Lambda_1 \times U$; in other words
$$
\Omega^{\mu_1}|_v(v_1,v_2) = \langle \widetilde{\mathrm{d}\alpha}\vphantom{}^{\mu_1}[v](v_1),v_2 \rangle - \langle v_1,\widetilde{\mathrm{d}\alpha}\vphantom{}^{\mu_1}[v](v_2)\rangle,
$$
so that
$$J^{\mu_1}(v) = \widetilde{\mathrm{d}\alpha}\vphantom{}^{\mu_1}[v] - \widetilde{\mathrm{d}\alpha}\vphantom{}^{\mu_1}[v]^\ast$$
\item[(ii)]
\emph{A (parameter-dependent)
Hamiltonian on $U$} is a smooth real-valued function $H^{\mu_1}(v)$ of $({\mu_1},v) \in \Lambda_1 \times U$
which satisfies $H^{\mu_1}(0)=0$ and $\mathrm{d}H^{\mu_1}[0]=0$ for all $\mu_1 \in \Lambda_1$.
We assume that its gradient, that is the element $\nabla H^{\mu_1}(v)$ of $Z$ with 
$$\widetilde{\mathrm{d}H}\hspace{-4mm}{\hphantom{H}}^{\mu_1}[v](w) = \langle \nabla H^{\mu_1}(v),w\rangle$$
for all $w \in Z$, exists for all $v$ in a dense subset $\DD_\mathrm{H}$ of $\Lambda_1 \times U$ and extends to a smooth function of
$(\mu_1,v) \in \Lambda_1 \times U$.
\item[(iii)]
\emph{The Hamiltonian vector field} $v_\mathrm{H}^{\mu_1}$ of a \emph{Hamiltonian system} $(Z,\Omega^{\mu_1}, H^{\mu_1})$, where
$\Omega^{\mu_1}$ is an exact symplectic $2$-form and $H^{\mu_1}$ is a Hamiltonian on $U$, is given by
$$v_\mathrm{H}^{\mu_1}(v)=J^{\mu_1}(v)^{-1}\nabla H^{\mu_1}(v), \qquad (\mu_1,v) \in \DD_\mathrm{H};$$
it yields the unique element $v_\mathrm{H}^{\mu_1}(v)$
of $Z$ such that
$$\Omega^{\mu_1}|_v(v_\mathrm{H}^{\mu_1}(v),w)=\widetilde{\mathrm{d}H}\vphantom{}^{\mu_1}[v](w)$$
for all $w \in Z$. \emph{Hamilton's equations}
$$v_t=v_\mathrm{H}^{\mu_1}(v), \qquad (\mu_1,v) \in \DD_\mathrm{H},$$
determine the orbits generated by the Hamiltonian vector field.
\end{itemize}

\subsection{The main result} \label{Intro - theorem}

Let $X$, $Z$ be real Hilbert spaces, where $X$ is continuously and densely embedded in $Z$, and
consider the autonomous evolutionary equation
\begin{equation}
v_t = L^{\mu_1}v + N^{\mu_1}(v), \label{Eqn in LI theorem}
\end{equation}
where $({\mu_1},v) \mapsto N^{\mu_1}(v)$ is a smooth mapping from
a neighbourhood $\Lambda_1 \times U$ of the origin in ${\mathbb R} \times X$ into $Z$ which satisfies $N^{\mu_1}(0)=0$ and $\mathrm{d}N^{\mu_1}[0]=0$ for all $\mu_1 \in \Lambda_1$, and
$L^{\mu_1}: X \subseteq Z \rightarrow Z$ is a closed linear operator which depends smoothly upon $\mu_1$. We study equation \eqref{Eqn in LI theorem}
under the following hypotheses.

\begin{itemize}
\item[(H1)] 
Equation \eqref{Eqn in LI theorem} represents Hamilton's equations
$$v_t = v_\mathrm{H}^{\mu_1}(v)$$
for a Hamiltonian system $(Z,\Omega^{\mu_1}, H^{\mu_1})$ with $\DD_\mathrm{H}=\Lambda_1 \times U$ (in the above nomenclature), so that
$$L^{\mu_1}(v) + N^{\mu_1}(v)=J^{\mu_1}(v)^{-1}\nabla H^{\mu_1}(v)$$
and in particular
$$L^{\mu_1}v=J^{\mu_1}(0)^{-1}\nabla H_2^{\mu_1}(v),$$
where $H_2^{\mu_1}(v)=\frac{1}{2}\mathrm{d}^2H^{\mu_1}[0](v,v)$ (the part of $H^{\mu_1}$ which is homogeneous of degree $2$ in $v$).
\item[(H2)] Equation \eqref{Eqn in LI theorem} is reversible: both $L^{\mu_1}$ and $N^{\mu_1}$ anticommute with an involution $R \in {\mathcal L}(X) \cap {\mathcal L}(Z)$.
This \emph{reverser} $R$ satisfies
$$H^{\mu_1}(Rv) = H^{\mu_1}(v), \quad R^\ast\alpha^{\mu_1}(Rv)= -\alpha^{\mu_1}(v), \quad R^\ast J^{\mu_1}(Rv)R =-J^{\mu_1}(v)$$
for all $(\mu_1,v) \in \Lambda_1 \times U$.
\end{itemize}

\noindent There are also spectral hypotheses on $L^{\mu_1}$.

\begin{itemize}
\item[(H3)] The linear operator $L^{\mu_1}$ has two pairs $\pm \ii \kappa_1^{\mu_1}$, $\pm \ii \kappa_2^{\mu_1}$ of purely imaginary eigenvalues with linearly independent eigenvectors $e_1^{\mu_1}$, $\bar{e}_1^{\mu_1}$, $e_2^{\mu_1}$, $\bar{e}_2^{\mu_1}$, all of which depend smoothly upon $\mu_1$.
Furthermore $\bar{e}_1^0=Re_1^0$, $\bar{e}_2^0=Re_2^0$ and
$$\kappa_1^0=\kappa_2^0, \qquad
\frac{\mathrm{d}}{\mathrm{d}\mu_1} (\kappa_1^{\mu_1}-\kappa_2^{\mu_1})\Big|_{\mu_1=0} \neq 0.$$
For notational simplicity we henceforth abbreviate $L^0$, $e_1^0$ and $e_2^0$ to respectively $L$, $e_1$ and $e_2$,
and define $\kappa:=\kappa_1^0=\kappa_2^0$.

\item[(H4)]The origin is one of
\begin{itemize}\vspace{-0.5\baselineskip}
\item[(i)]
a point of the resolvent set of $L$,
\item[(ii)]
a point of the continuous spectrum of $L$,
\item[(iii)]
a geometrically simple, algebraically double eigenvalue of $L$
with eigenvector $f_1$ and generalized eigenvector $f_2$ (possibly embedded in continuous spectrum),
where $Rf_1=- f_1$ and $Rf_2= f_2$.
\end{itemize}
Here (ii) and (iii) entail further spectral hypotheses (see (H7) and (H8) below).
\item[(H5)] The imaginary number $\ii k\kappa $ belongs to the resolvent set of $L$ for each $k\in \mathbb{Z}\backslash \{0,-1,1\}$.
\item[(H6)] The linear operator $L$ satisfies
$$
\norm{(\ii k\kappa I-L)^{-1}}_{Z\rightarrow Z}\lesssim \frac{1}{\abs{k}},\qquad
\norm{(\ii k\kappa I-L)^{-1}}_{Z\rightarrow X}\lesssim 1
$$
for all $k\in \mathbb{Z}\backslash \{0,-1,1\}$.
\item[(H7)]
The zero eigenvalue (if present) is `trivial' in the following sense: writing $u \in U$ as $u=q f_1 + w$ with $w \in \{f_1\}^\perp$, one finds that $J^{\mu_1}(u)$ and $H^{\mu_1}(u)$ do not depend upon $q$.
\item[(H8)] 
Suppose that the equation
\[
Lu^\dag=J^0(0)^{-1}(I-\Pi_0)N^\star(u,\mu_1,\mu_2),
\]
where
$$N^\star(u,\mu_1,\mu_2)
=\frac{1}{\sqrt{2\pi}}\int_0^{2\pi} \Big(J^{\mu_1}(u)\big((\kappa +\mu_2)J^{\mu_1}(u)u_\tau-L^{\mu_1}u-N^{\mu_1}(u)\big)+J^0(0)Lu\Big)\, \mathrm{d}\tau,$$
has a unique solution $u^\dag \in (I-\Pi_0)X$ which depends smoothly upon $u \in {\mathcal U}$, $\mu_1 \in \Lambda_1$ and
$\mu_2 \in \Lambda_2$, where $\Pi_0$ is the orthogonal projection of $X$ onto $\mathrm{span}\{f_1,f_2\}$ and
\begin{equation*}
 {\mathcal U}=\{u \in H_{\mathrm{per}}^1(\mathbb{R},Z)\cap L_\mathrm{per}^2(\mathbb{R},X): \mbox{$u(\tau) \in U$ for all $\tau \in {\mathbb R}$}\}.
\end{equation*}
\end{itemize}

\begin{remarks} \hspace{1cm}
\begin{itemize}
\item[(i)]
Hypothesis (H8) is meaningful only if the origin lies in the continuous spectrum of $L$ or is an eigenvalue embedded in continuous spectrum, since it is
automatically satisfied if the origin lies in the resolvent set of $L$ or is an isolated eigenvalue of $L$.
\item[(ii)] If $J^{\mu_1}(u)$ is constant and $\Pi_0=0$, then
$$N^\star(u,\mu_1,\mu_2)=-\frac{1}{\sqrt{2\pi}}\int_0^{2\pi}N^{\mu_1}(u)\,\mathrm{d}\tau,$$
so that hypothesis (H8) reduces to the condition used by Iooss \cite{Iooss99}.
\end{itemize}
\end{remarks}

\begin{theorem} \label{LCT}
Under hypotheses (H1)--(H8) there exist $\varepsilon>0$ and a smooth, two-parameter branch
$\{v(t_1,t_2),\mu_1(t_1,t_2)\}_{0 \leq t_1,t_2 < \varepsilon}$ of $2\pi/(\kappa +\mu_2(t_1,t_2))$-periodic reversible solutions
to equation \eqref{Eqn in LI theorem} in $H_{\mathrm{loc}}^1(\mathbb{R},Z)\cap L_\mathrm{loc}^2(\mathbb{R},X)$.
The rescaled function $v(t)=u(\tau)$, $\tau=(\kappa +\mu_2)t$ satisfies $\|u(t_1,t_2)\|_\ZZ \rightarrow 0$, while
$\mu_1(t_1,t_2), \mu_2(t_1,t_2) \rightarrow 0$ as $(t_1,t_2) \rightarrow (0,0)$.
\end{theorem}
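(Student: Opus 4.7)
The strategy is a variational Lyapunov--Schmidt reduction on a space of reversible $2\pi$-periodic loops. First I rescale time via $v(t)=u(\tau)$ with $\tau=(\kappa+\mu_2)t$, so that $2\pi/(\kappa+\mu_2)$-periodic solutions of \eqref{Eqn in LI theorem} are in one-to-one correspondence with $2\pi$-periodic critical points of the action functional $S(u,\mu_1,\mu_2)$ introduced above, and restrict $S$ to the closed subspace $\mathcal{U}_R=\{u\in\mathcal{U}:Ru(-\tau)=u(\tau)\}$. Hypotheses (H1) and (H2) guarantee that $S$ is smooth on a neighbourhood of the origin in $\mathcal{U}_R\times\Lambda_1\times\R$ and that its critical points there are precisely reversible periodic solutions of \eqref{Eqn in LI theorem}.

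The next step is to identify the kernel $K$ of $\mathrm{d}_u^2 S[0,0,0]$. The linearised equation is $\kappa u_\tau=Lu$; Fourier-decomposing on $[0,2\pi]$ and applying (H3)--(H5) shows that the only contributing modes are $k=\pm 1$ with eigenvectors $e_j,\bar{e}_j$ ($j=1,2$), so that $K$ is four real-dimensional. The reversibility condition combined with $\bar{e}_j^0=Re_j^0$ then cuts $K$ down to the two-dimensional real subspace spanned by $\phi_j(\tau):=e^{\ii\tau}e_j+e^{-\ii\tau}Re_j$, $j=1,2$; let $(t_1,t_2)$ be the corresponding coordinates. I now split $u=t_1\phi_1+t_2\phi_2+u^\dag$ with $u^\dag$ in the $L^2$-orthogonal complement of $K$ inside $\mathcal{U}_R$, and project the Euler--Lagrange equation for $S$ onto $K^\perp$. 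After reorganising the quasilinear term the resulting slaved equation is precisely that appearing in (H8). In case (H4)(i) the operator $L$ is boundedly invertible on $K^\perp$ and the implicit function theorem applies directly, the Fourier-mode estimates (H6) giving boundedness of the inverse as a map $L^2_\mathrm{per}\cap K^\perp\to\mathcal{U}_R\cap K^\perp$. In cases (H4)(ii)--(iii) the compatibility condition built into (H8) furnishes the needed solvability of $Lu^\dag=\ldots$, while (H7) decouples the trivial $f_1$-direction. Either way one obtains a smooth $u^\dag=u^\dag(t_1,t_2,\mu_1,\mu_2)$ of order $O(|t|^2)+O(|t|(|\mu_1|+|\mu_2|))$.

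Substituting $u^\dag$ back yields a reduced functional $\widetilde{S}(t_1,t_2,\mu_1,\mu_2)$ on a neighbourhood of the origin in $\R^4$, whose critical points in $(t_1,t_2)$ deliver the sought solutions. A direct expansion, using the $1\!:\!\pm 1$ semisimple normal form of $H_2^0$ and the non-zero collision speed in (H3), gives
\[
\widetilde{S}=\tfrac{1}{2}a_1(\mu_1,\mu_2)\,t_1^2+\tfrac{1}{2}a_2(\mu_1,\mu_2)\,t_2^2+O(|t|^3),
\]
with $a_j(\mu_1,\mu_2)=\alpha_{j1}\mu_1+\alpha_{j2}\mu_2+O(|\mu|^2)$; the $2\times 2$ matrix $(\alpha_{ji})$ is non-singular because one column records the eigenvalue splitting controlled by $\frac{\mathrm{d}}{\mathrm{d}\mu_1}(\kappa_1^{\mu_1}-\kappa_2^{\mu_1})\big|_{\mu_1=0}\neq 0$ while the other records the global frequency shift induced by $\mu_2$. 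The stationarity equations $\partial_{t_j}\widetilde{S}=0$ factor as $t_j\,\Phi_j(t_1,t_2,\mu_1,\mu_2)=0$ thanks to the $\Z_2\times\Z_2$ symmetry inherited from time translation combined with reversibility, and on the branch $t_1,t_2>0$ the implicit function theorem applied to $\Phi_1=\Phi_2=0$ produces $(\mu_1,\mu_2)=(\mu_1(t_1,t_2),\mu_2(t_1,t_2))$ with $\mu_j\to 0$ as $(t_1,t_2)\to(0,0)$.

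The main technical obstacle is the execution of the Lyapunov--Schmidt step in the quasilinear setting: when $J^{\mu_1}(u)$ is non-constant and the origin lies in the continuous spectrum of $L$, a naive projection does not produce an equation whose source belongs to $\operatorname{Range}L$. The expression $N^\star$ in (H8) is carefully engineered to reabsorb the correction $J^{\mu_1}(u)-J^0(0)$ into a single source whose range condition is directly testable, and preserving the variational structure throughout is what ultimately yields a reduced problem depending on only four real scalars, allowing the clean non-degeneracy argument above.
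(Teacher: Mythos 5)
Your overall strategy coincides with the paper's: rescale time, set up the action functional $S$, perform a variational Lyapunov--Schmidt reduction using (H5)--(H6) for the non-resonant modes and (H8) for the zero mode, reduce the bifurcation equations by symmetry to equations in the squared amplitudes, and close with the implicit-function theorem using a $2\times 2$ determinant whose non-vanishing is exactly the collision-speed condition in (H3). Two points of execution differ, one of which is a genuine gap. First, the benign one: you restrict \emph{a priori} to the fixed-point space $\mathcal{U}_R$ of the reverser, whereas the paper reduces in the full loop space and only afterwards exploits the invariance of the reduced functional under $T_\theta$ and $T$ to conclude that $s$ depends only on $|A|^2$, $|B|^2$, $\tfrac{\ii}{2}(\bar AB-A\bar B)$, $\tfrac12(A\bar B+\bar AB)$ and $p$, restricting to the slice $A=r_1$, $B=\ii r_2$ at the very end. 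Your route is viable, but you must justify that critical points of $S|_{\mathcal{U}_R}$ are critical points of $S$ (symmetric criticality for the involution $T$, using $S\circ T=S$ and the splitting of $\mathcal{X}$ into $\pm1$ eigenspaces of $T$); as written this is asserted rather than argued.

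The genuine gap is your treatment of the kernel in case (H4)(iii). You take the kernel of the linearisation to be four-dimensional (modes $\pm1$ only), cut to two dimensions by reversibility, and claim that (H7) ``decouples the trivial $f_1$-direction'' while (H8) supplies all the zero-mode solvability. But (H8) only governs the component of the zero-mode equation in $(I-\Pi_0)X$, i.e.\ orthogonal to $\mathrm{span}\{f_1,f_2\}$, and (H7) only eliminates the equation along $f_1$ (the coordinate $q$, with $Rf_1=-f_1$). The generalised eigenvector $f_2$ satisfies $Rf_2=f_2$, so the constant loop $f_2$ survives your restriction to $\mathcal{U}_R$ and its coefficient $p$ must be retained in the finite-dimensional reduced space: the component of the bifurcation equation along $f_2$, namely $\partial_p s=0$, is a third equation that neither (H7) nor (H8) disposes of. In the paper this is handled by including $qf_1+pf_2$ in $\mathcal{W}_{1,1}$, obtaining the system \eqref{Red 4}--\eqref{Red 6}, solving \eqref{Red 4}--\eqref{Red 5} for $(\mu_1,\mu_2)$ exactly as you do, and then solving \eqref{Red 6} for $p$ via a further application of the implicit-function theorem, using the non-degeneracy $\partial_3\tilde t(0,0,0)=2\tilde s_{002}^{00}=-(J^0(0)Lf_2,f_2)=-\Omega^0|_0(f_1,f_2)=-1\neq0$. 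Your reduced problem is therefore missing one unknown ($p$) and one equation, and the closing argument is incomplete precisely in the case of an algebraically double zero eigenvalue --- which is the case that actually occurs in the hydroelastic application for $\beta>0$. In cases (H4)(i) and (ii) your two-dimensional reduction is correct.
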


Theorem \ref{LCT} is proved in Section \ref{proof of LCT}.

\subsection{Hydroelastic waves}

In Section \ref{Formulation} we introduce the hydrodynamic problem for travelling waves on the surface of a three-dimensional body of water beneath a thin ice sheet modelled using the Cosserat theory of
hyperelastic shells (Plotnikov \& Toland \cite{PlotnikovToland11}). The fluid is bounded below by a rigid horizontal bottom $\{{x_2}=-h\}$ (the cases $h<\infty$
and $h=\infty$ are referred to as `finite depth' and `infinite depth') and
above by a free surface $\{{x_2}=\eta({x_1},{x_3})\}$ (in a frame of reference following the wave with constant speed $c$ in the ${x_1}$ direction); there is no cavitation between this surface and the ice sheet. The hydrodynamic problem is formulated in terms of $\eta$ and an Eulerian velocity potential $\phi$ in dimensionless form
in Section \ref{Formulation}; the govering equations \eqref{gov 1}--\eqref{gov 4} depend upon two dimensionless parameters
$$\beta=\left(\frac{D}{\rho g h^4}\right)^{\!\!1/4} \geq 0, \qquad \gamma=\left(\frac{c^8 \rho}{D g^3}\right)^{\!\!1/8}>0,$$
where $D$, $\rho$ and $g$
are respectively the coefficient of flexural rigidity for the ice sheet, the density of the fluid and the acceleration due to gravity
(see Guyenne and Parau \cite{GuyenneParau12}). The dimensionless fluid domain is $\{-\frac{1}{\beta} < x_2 < \eta(x_1,x_2)\}$, so that the limit $\beta \rightarrow 0$
corresponds to `infinite depth'.

We consider waves which are periodic with periods $p_1$ and $p_2$ in two arbitrary horizontal directions $x$ and $z$
which form (different) angles $\theta_1$, $\theta_2 \in [0,\pi)$ with the ${x_1}$-axis respectively, so that
\[
x=\csc(\theta_2-\theta_1)({x_1}\sin\theta_2-{x_3}\cos\theta_2), \qquad
z=\csc(\theta_1-\theta_2)({x_1}\sin\theta_1-{x_3}\cos\theta_1)
\]
(see Figure \ref{intro - periodic domain}). 
To this end we seek solutions of the governing equations of the form
$$
\eta({x_1},{x_3})=\tilde{\eta}(\tilde{x},\tilde{z}),
$$
where
\begin{equation*}
\tilde{x} = {x_1}\sin\theta_2-{x_3}\cos\theta_2, \qquad
\tilde{z} = \nu({x_1}\sin\theta_1-{x_3}\cos\theta_1)
\end{equation*}
with $\nu=2\pi/p_2$ and $\tilde{\eta}$ is $2\pi$-periodic in $\tilde{z}$.  We proceed by formulating the hydroelastic problem as a reversible
Hamiltonian system in which the horizontal spatial direction $\tilde{x}$ plays the role of the time-like variable (`spatial dynamics'), working
in a phase space of functions which are $2\pi$-periodic in $\tilde{z}$.  By construction a periodic (in `time') solution of the evolutionary system,
corresponds to a surface wave which is periodic in both $\tilde{x}$ and $\tilde{z}$ and is found using a Lyapunov centre theorem
(although care is required in interpreting such solutions; see below).

To formulate the hydroelastic equations as an evolutionary system, we exploit the observation
that they follow from a variational principle (a modified version of the classical variational principle for water waves introduced by Luke \cite{Luke67}). We treat the variational functional as an action functional in which a density is integrated over the $\tilde{x}$ direction and perform a Legendre transform (which is actually higher order due to the presence of 
second-order derivatives in the density) to derive a formulation of the hydrodynamic problem as an infinite-dimensional, reversible Hamiltonian system in which $\tilde{x}$ is the time-like variable. Although this procedure is formal, it delivers a candidate for a formulation of the hydrodynamic problem as an evolutionary system whose mathematical correctness is readily confirmed a posteriori; full details are given in Section \ref{Formulation}. Finally, we introduce a bifurcation parameter $\mu_1$ by
writing $\nu=\nu_0+\mu_1$, where $\nu_0$ is a reference value for $\nu$ (see below) and use a change of variable to linearise a nonlinear boundary condition emerging from the Legrendre transform. The result is a system of the form
\begin{equation}
\hat{v}_x = L^{\mu_1} \hat{v} + N^{\mu_1}(\hat{v}) \label{intro - SD}
\end{equation}
which is amenable to Theorem \ref{LCT}, satisfying (H1) and (H2) by construction.
\begin{figure}
\centering
\includegraphics[scale=0.5]{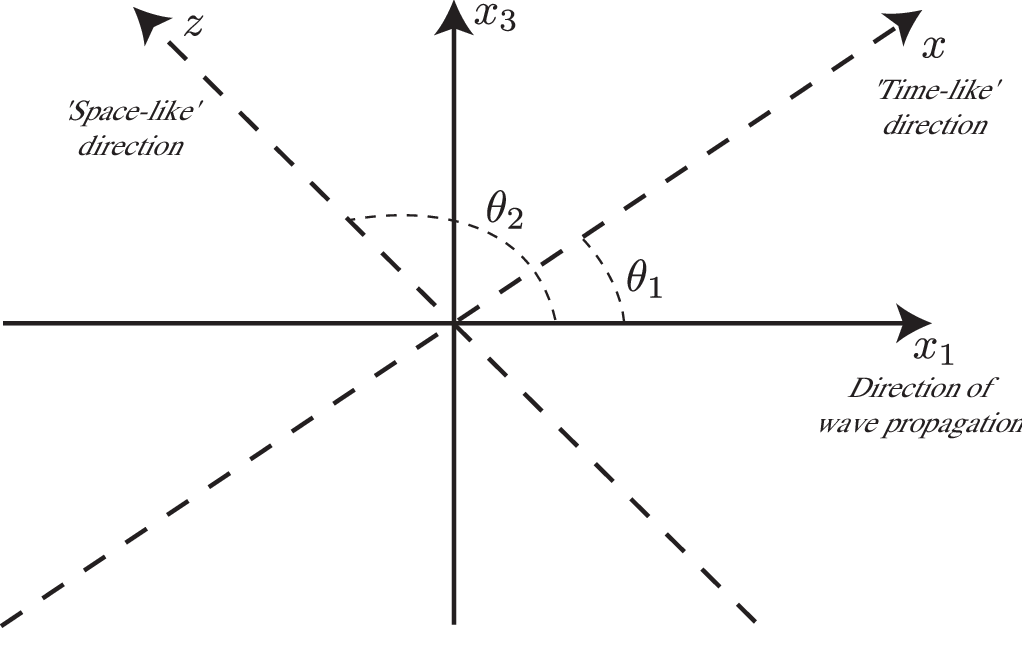}
\caption{In the spatial dynamics formulation of the hydrodynamic problem the $x$ and $z$ directions are treated as respectively ‘time-like’ and ‘space like’.}
\label{intro - periodic domain}
\end{figure}

A purely imaginary eigenvalue $\ii s$ of $L:=L^0$ with corresponding eigenvector in the $k$th Fourier mode (where $(k,s) \neq (0,0)$) corresponds to a linear hydroelastic wave of the form
$\eta(\tilde{x},\tilde{z})=\eta_{s,k} \ee^{\ii \ell_1 \tilde{x} + \ii \ell_2 \tilde{z}}$ with
\pagebreak
\begin{align}
\ell_1&=s\sin \theta_2+\nu_0k\sin \theta_1,\label{intro - l1eq}\\
\ell_2&=-s\cos\theta_2 -\nu_0 k \cos\theta_1,\label{intro - l2eq}
\end{align}
and a solution of this kind exists if and only if $\ell_1$ and $\ell_2$ satisfy the dispersion relation
$$
\DD(\ell_1,\ell_2):=(1+(\ell_1^2+\ell_2^2)^2)\sqrt{\ell_1^2+\ell_2^2}\tanh\left( \beta^{-1}\sqrt{\ell_1^2+\ell_2^2}\right)-\gamma^2\ell_1^2=0.
$$
A mode $k$ purely imaginary eigenvalue $\ii s$  therefore corresponds to an intersection in the $(\ell_1,\ell_2)$-plane of
the dispersion curve
$$C_\mathrm{dr}=\{(\ell_1,\ell_2) \in {\mathbb R}^2 \setminus \{(0,0)\}: \DD(\ell_1,\ell_2)=0\}$$
with the straight line $S_k$ defined by equations \eqref{intro - l1eq}, \eqref{intro - l2eq}. (The solution $(\ell_1,\ell_2)=(0,0)$ of $\DD(\ell_1,\ell_2)=0$ is excluded since it corresponds to $(k,s)=(0,0)$.) The dispersion curve is
sketched in Figure \ref{Intersections}(a).
The $(\beta,\gamma)$-parameter plane is divided into three regions in which $C_\mathrm{dr}$ has respectively zero, one, and two nontrivial bounded branches; the delineating curves $D_1$ and $D_2$ are given explicitly in Section \ref{pis}.
The central region is in fact each divided into two subregions, at the mutual boundary of which the qualitative shape of the branches changes, namely, from convex to nonconvex. 

A point of intersection of $S_k$ and $C_\mathrm{dr}$ corresponds to a purely imaginary mode $k$ eigenvalue $\ii s$; its imaginary part is the value of the parameter $s$ at the point of intersection, that is, the value of $S_0$ in the $(S_0,T)$-coordinate system at the intersection, where
\[T=\{(\ell_1,\ell_2)\in {\mathbb R}^2\;:\;
\ell_1=\sin\theta_1\,a,\
\ell_2=-\cos\theta_1\,a,\ a\in\R\}\]
(see Figure \ref{Intersections}(b)).
The geometric multiplicity of the eigenvalue $\ii s$ is given by the number of distinct lines in the family $\{S_k\}$ which intersect $C_\mathrm{dr}$ at this parameter value, and a tangent intersection between $S_k$ and $C_\mathrm{dr}$ indicates that each eigenvector in mode $k$ has an associated Jordan chain of length at least $2$.
Notice that the sets
$S_k \cap C_\mathrm{dr}$ and $S_{-k} \cap C_\mathrm{dr}$ have the same cardinality:
the purely imaginary number $\ii s$ is a mode $k$ eigenvalue
if and only if the purely imaginary number $-\ii s$ is a mode $-k$ eigenvalue.

\begin{figure}[h!]\centering
\includegraphics{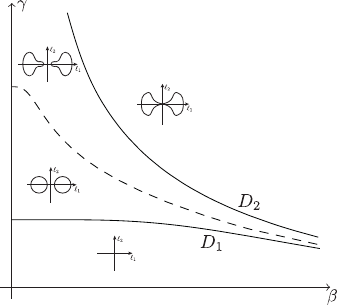}\hspace{2cm}
\includegraphics[scale=0.55]{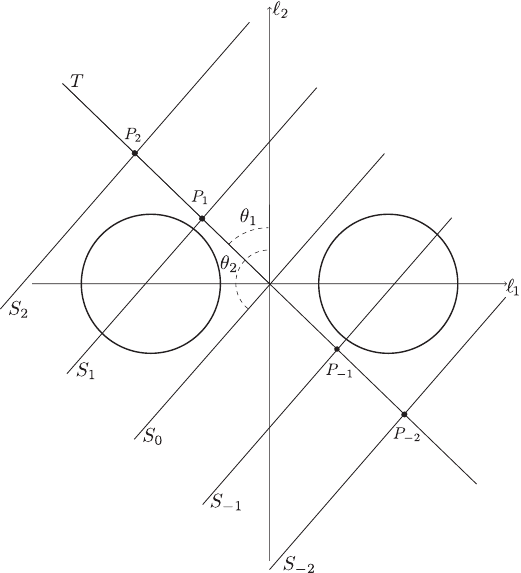}
\caption{(a) Shape of the dispersion curve $C_\mathrm{dr}$ in the $(\beta,\gamma)$-parameter plane. (b) Position of the lines $S_k$ and the dispersion curve $C_\mathrm{dr}$ in the $(\ell_1,\ell_2)$-plane;
the lines $S_0$ and $T$ form rangles $\theta_2$ and $\theta_1$ respectively with the positive $\ell_2$ axis, and the line $S_k$ intersects the line $T$ at the point $P_k$.} 
\label{Intersections}
\end{figure}

Let us now consider how to apply a Lyapunov centre theorem to equation \eqref{intro - SD}. A first attempt might be to 
choose $\beta$, $\gamma$ and $\theta_2$ such that $S_0$ does not intersect $C_\mathrm{dr}$ and $\theta_1$ such that $S_1$ and $S_{-1}$ intersect $C_\mathrm{dr}$ in points
with coordinates $(s,\nu_0)$, $(t,\nu_0)$ and $(-s,\nu_0)$, $(-t,\nu_0)$ in the $(S_0,T)$-coordinate system respectively,  while $S_k$ does not intersect 
$C_\mathrm{dr}$ for $k = \pm 2$, $\pm 3$, \ldots (see Figure \ref{Intersections}(b)). In this configuration $L$ has simple mode $1$ eigenvalues $\ii s$, $\ii t$
with eigenvectors $\hat{v}_{1,s}\ee^{\ii z}$, $\hat{v}_{1,t}\ee^{\ii z}$ and mode $-1$ eigenvalues $-\ii s$, $-\ii t$ with eigenvectors $\hat{v}_{-1,s}\ee^{-\ii z}$, $\hat{v}_{-1,t}\ee^{-\ii z}$.
Assuming that $0<t<s$ (and neglecting any spectrum at the origin for the moment), one
finds that the purely imaginary eigenvalues $\pm \ii s$ satisfy the non-resonance condition in a standard Lyapunov centre theorem, an infinite-dimensional version of which
therefore gives a periodic solution of \eqref{intro - SD} with period near $2\pi/s$. However this approach does not yield a genuinely three-dimensional wave: at the linear level the
solution takes the form $\hat{v} = \mathrm{Re}(\hat{v}_{1,s}\ee^{\ii s x}\ee^{\ii z})=\mathrm{Re}(\hat{v}_{1,s}\ee^{\ii (s x + z)})$, which depends upon the single spatial direction
$sx+z$. Note that Groves \& Haragus \cite{GrovesHaragus03} and Bagri \& Groves \cite{BagriGroves15}, while correctly elucidating the use of spatial dynamics and Lyapunov
centre theory to construct doubly periodic water waves, actually detect waves of this kind, often referred to as `$2 \frac{1}{2}$-dimensional waves'.

A more promising approach is to choose $\beta$, $\gamma$ and $\theta_2$ such that $S_0$ does not intersect $C_\mathrm{dr}$ and $\nu_0$ and $\theta_1$ such that $S_1$ and $S_{-1}$ each intersect $C_\mathrm{dr}$
in points with coordinates $(\pm s,\nu_0)$ and $(\pm s,-\nu_0)$ in the $(S_0,T)$-coordinate system, while $S_k$ does not intersect 
$C_\mathrm{dr}$ for $k = \pm 2$, $\pm 3$, \ldots (see Figure \ref{scenario}). In this configuration
$L$ exihibits a $1:1$ or $1:-1$ resonance: it has two mode $1$ eigenvalues $\pm \ii s$,
two mode $-1$ eigenvalues $\pm \ii s$, so that $\pm \ii s$ are geometrically and algebraically double eigenvalues of $L$ with eigenvectors
$\hat{v}_{1,s}\ee^{\ii z}$, $\hat{v}_{-1,s}\ee^{-\ii z}$ and $\hat{v}_{1,-s}\ee^{\ii z}$, $\hat{v}_{-1,-s}\ee^{-\ii z}$. Under the assumption that the
other hypotheses are satisfied, Theorem \ref{LCT} gives a periodic solution of \eqref{intro - SD} with period near $2\pi/s$ which yields a genuinely
three-dimensional wave: at the linear level the solution takes the form $\hat{v}=(\hat{v}_{1,s} \ee^{\ii s x}\ee^{\ii z} + \hat{v}_{1,-s} \ee^{-\ii s x}\ee^{-\ii z})$, which
cannot be reduced to a function of a single spatial direction. (One can of course apply this idea in any mode, assuming that $\pm \ii s$ are
both mode $k$ and mode $-k$ eigenvalues which do not resonate with any other purely imaginary eigenvalues.)

\begin{figure}[h!]\centering
\includegraphics[scale=0.55]{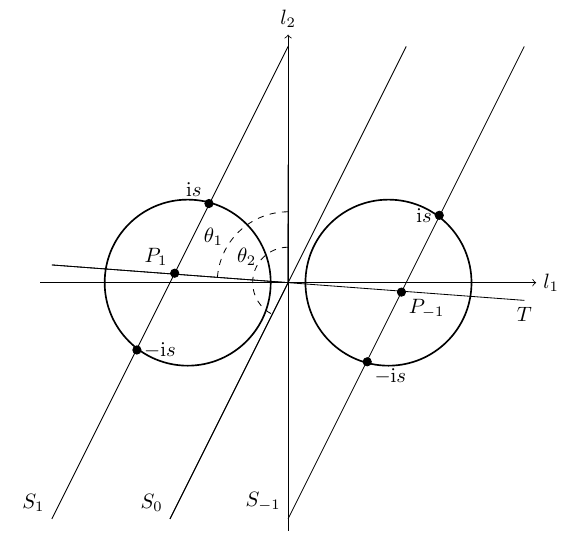}\includegraphics[scale=0.55]{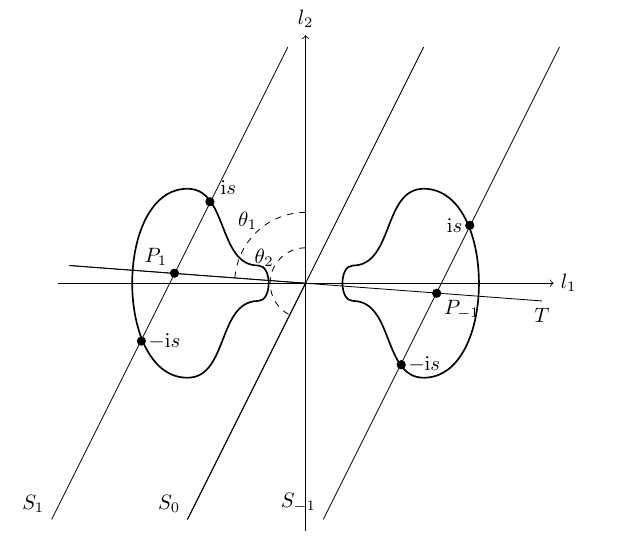}\includegraphics[scale=0.55]{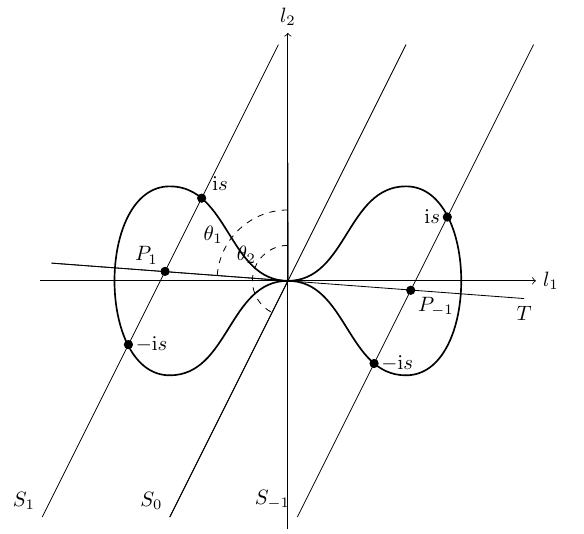}
\caption{Scenarios of interest.}
\label{scenario}
\end{figure}

In Section \ref{Application} we apply Theorem \ref{LCT} to the spatial dynamics formulation \eqref{intro - SD} of the hydro\-elastic problem in the eigenvalue scenarios shown in Figure \ref{scenario},
studying the purely imaginary spectrum of $L$ in detail in Section \ref{pis}. Hypotheses (H3), (H5) and (H6) are readily verified, while (H4)(iiii) (the origin is a double eigenvalue of $L$)
and (H4)(ii) (the origin is a point of the continuous spectrum of $L$) arise in the cases $\beta>0$ and $\beta=0$ respectively. In the former case we find that the zero eigenvalue is trivial in the
sense of (H7) and an isolated spectral point of $L$, so that (H8) is also satisfied. The additional verification of (H8) in the case $\beta=0$ is undertaken in Section \ref{Iooss condition}.
Altogether we establish the following result.

\begin{theorem} \label{First hydro result}
Choose $\beta$, $\gamma$ and $\theta_2$ such that $S_0$ does not intersect $C_\mathrm{dr}$ and $\nu_0$ and $\theta_1$ such that $S_1$ and $S_{-1}$ each intersect $C_\mathrm{dr}$
in points with coordinates $(\pm s,\nu_0)$ and $(\pm s,-\nu_0)$ in the $(S_0,T)$-coordinate system, while $S_k$ does not intersect 
$C_\mathrm{dr}$ for $k = \pm 2$, $\pm 3$, \ldots (see Figure \ref{scenario}).
There exist $\varepsilon>0$ and a two-parameter branch $\{(\phi,\eta)(t_1,t_2)\}_{0\leq t_1,t_2<\varepsilon}$ of doubly periodic solutions of \eqref{gov 1}--\eqref{gov 4} with
periods $2\pi/(s+\mu_2(t_1,t_2))$, $2\pi/(\nu_0+\mu_1(t_1,t_2))$ in the variables ${x_1}\sin\theta_2-{x_3}\cos\theta_2$ and ${x_1}\sin\theta_1-{x_3}\cos\theta_1$
respectively.
\end{theorem}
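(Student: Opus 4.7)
The plan is to verify the hypotheses of Theorem \ref{LCT} for the spatial dynamics formulation \eqref{intro - SD} and then translate a family of $\tilde{x}$-periodic solutions back into doubly periodic hydroelastic waves. Hypotheses (H1) and (H2) are already built into the construction of \eqref{intro - SD} in Section \ref{Formulation} via the variational principle and the natural reverser $R$, so the task reduces to the spectral conditions (H3)--(H8), to be carried out in Sections \ref{pis}, \ref{Application} and \ref{Iooss condition}.

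The first block of work is the purely imaginary spectrum of $L$. I would use the geometric description in which mode $k$ purely imaginary eigenvalues $\ii s$ are identified with points of $S_k\cap C_\mathrm{dr}$: the chosen scenario (Figure \ref{scenario}) forces $S_{\pm 1}$ each to meet $C_\mathrm{dr}$ transversally at the two points with $S_0$-coordinates $\pm s$, so that $\pm\ii s$ are geometrically and algebraically double eigenvalues of $L$ with independent mode $\pm 1$ eigenvectors paired by $R$. Varying $\mu_1=\nu-\nu_0$ tilts the lines $S_{\pm 1}$ about their intersections $P_{\pm 1}$ with $T$, and applying the implicit function theorem to $\DD(\ell_1,\ell_2)=0$ shows that the resulting two pairs of eigenvalues split with non-zero velocity, completing (H3). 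The non-resonance condition (H5) is the geometric statement that $S_k$ with $|k|\geq 2$ avoid $C_\mathrm{dr}$ at the chosen parameters, while (H6) follows by diagonalising $L$ in the Fourier basis in $\tilde{z}$ and exploiting the large-$|\ell|$ asymptotics of $\DD$.

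The remaining work concerns the origin and splits into two cases. When $\beta>0$ the Dirichlet--Neumann operator acts on a bounded strip, $L$ is Fredholm of index zero, and a direct kernel computation identifies $0$ as a geometrically simple, algebraically double eigenvalue arising from the Bernoulli constant and the mean of $\phi$; this yields (H4)(iii) with a Jordan pair of the correct symmetry under $R$, while (H7) holds because the Hamiltonian depends on $\phi$ only through its gradient and the symplectic structure is translation invariant in the conjugate variable, and (H8) is automatic since $0$ is an isolated spectral point. In the infinite-depth case $\beta=0$ the Dirichlet--Neumann operator loses compactness and $0$ lies in the continuous spectrum of $L$, giving (H4)(ii); here (H8) is the genuine obstacle and the part I expect to be the main difficulty. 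The strategy in Section \ref{Iooss condition} is to Fourier decompose $(I-\Pi_0)N^\star(u,\mu_1,\mu_2)$ in $\tilde{z}$ and $\tau$, to observe that on every non-zero Fourier mode in $\tilde{z}$ the operator $L$ is boundedly invertible with estimates consistent with (H6), and to show that the zero-$\tilde{z}$-mode contribution of $N^\star$, after projection off $\mathrm{span}\{f_1,f_2\}$, actually lies in the range of $L$ thanks to the structure of $J^{\mu_1}(u)u_\tau$ and the reversibility of $R$; this is the technical core of the argument.

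With all hypotheses verified, Theorem \ref{LCT} produces a two-parameter family $\{\hat{v}(t_1,t_2),\mu_1(t_1,t_2)\}$ of reversible solutions of \eqref{intro - SD} of period $2\pi/(s+\mu_2(t_1,t_2))$ in $\tilde{x}$. Undoing the changes of variable from the Hamiltonian formulation returns functions $(\phi,\eta)$ of $\tilde{x}$ and $\tilde{z}$ which are $2\pi/(s+\mu_2)$-periodic in $\tilde{x}$ and $2\pi$-periodic in $\tilde{z}$ by the choice of phase space; substituting $\tilde{x}=x_1\sin\theta_2-x_3\cos\theta_2$ and $\tilde{z}=(\nu_0+\mu_1)(x_1\sin\theta_1-x_3\cos\theta_1)$ yields the announced periods $2\pi/(s+\mu_2)$ and $2\pi/(\nu_0+\mu_1)$ in the two horizontal directions, completing the proof.
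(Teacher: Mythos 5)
Your overall route is the paper's own: formulate the problem as the reversible Hamiltonian spatial dynamics system of Section \ref{Formulation}, verify (H1)--(H8) using the geometric description of the purely imaginary spectrum in Section \ref{pis}, apply Theorem \ref{LCT}, and undo the changes of variable to recover doubly periodic solutions of \eqref{gov 1}--\eqref{gov 4}. Two steps, however, do not go through as you state them.

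First, in (H3) the transversality of the intersections $S_{\pm 1}\cap C_\mathrm{dr}$ is not the relevant condition: it guarantees that the eigenvalue branches $s_{1}^{\mu_1}$, $s_{-1}^{\mu_1}$ depend smoothly upon $\mu_1$, but not that they separate with non-zero speed. The paper differentiates the defining relations \eqref{ievals with mu} and finds that $\frac{\mathrm{d}}{\mathrm{d}\mu_1}(s_{1}^{\mu_1}-s_{-1}^{\mu_1})\big|_{\mu_1=0}$ vanishes precisely when the solution curves of $g(t,\nu,1)=0$ and $g(t,\nu,-1)=0$ meet tangentially at $(s,\nu_0)$ in the $(t,\nu)$-plane; the non-degeneracy required by (H3) is therefore an additional, only generically satisfied hypothesis, and does not follow from the implicit function theorem alone as you claim. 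Second, your mechanism for (H8) when $\beta=0$ is misattributed: reversibility plays no role. What Section \ref{Iooss condition} actually exploits is that the $\Psi$-component of the Hamiltonian vector field is an exact $y$-derivative, $\bl[(v_{\mathrm{H}}^{\mu_1}(v))_\Psi]_0\br_0=\bl[(g_0^{\mu_1}(v))_y]_0\br_0$ (Proposition \ref{solvability_v_prop}), so that its integral over $(-\infty,0)$ collapses to boundary values which are pinned down by the boundary conditions \eqref{BC 1}, \eqref{BC 2} and Hamilton's equation \eqref{Ham eq 1}; combined with the explicit formula for the adjoint $\widetilde{\mathrm{d}G}\vphantom{}^{\mu_1}[v]^\ast$, this yields precisely the compatibility condition of Lemma \ref{lemma_spectrum}(iii) for the relevant right-hand side, together with the $L^2(-\infty,0)$ mapping properties of the iterated integrals. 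You correctly identify this as the technical core, but the ingredients you propose for closing it would not suffice.
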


We also derive the following `inverse' result which shows that (under a nonresonance condition) one can find a family of doubly periodic solutions which are small perturbations of
any given periodic cell; these solutions have a fixed dimensionless wave speed $\gamma$ and fixed periodic directions.

\begin{theorem} \label{Second hydro result}
Choose $\beta$, $s$, $\nu_0$ and $\theta_2-\theta_1$ arbitrarily. There exist $\theta_1$ and $\gamma$ such that $S_1$ and $S_{-1}$ each intersect $C_\mathrm{dr}$
in points with coordinates $(\pm s,\nu_0)$ and $(\pm s,-\nu_0)$ in the $(S_0,T)$-coordinate system, so that Theorem \ref{First hydro result} holds under the additional hypothesis
that $S_k$ does not intersect  $C_\mathrm{dr}$ for $k \neq \pm 1$ .
\end{theorem}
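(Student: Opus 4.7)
The plan is to convert the geometric conditions into two algebraic equations—one from each dispersion relation—and to solve them in closed form for the unknowns $\theta_1$ and $\gamma$. The first step is to reduce the number of intersection conditions: requiring that $S_{\pm 1}$ meet $C_\mathrm{dr}$ at the four prescribed points is equivalent to asking that the two points
\[
P=(s\sin\theta_2+\nu_0\sin\theta_1,\,-s\cos\theta_2-\nu_0\cos\theta_1),\qquad
Q=(-s\sin\theta_2+\nu_0\sin\theta_1,\,s\cos\theta_2-\nu_0\cos\theta_1)
\]
and their antipodes $-P,-Q$ all lie on $C_\mathrm{dr}$; the antipodal conditions are automatic because $\DD(\ell_1,\ell_2)$ is invariant under $(\ell_1,\ell_2)\mapsto(-\ell_1,-\ell_2)$. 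The decisive observation is that $|P|^2=s^2+\nu_0^2+2s\nu_0\cos\delta$ and $|Q|^2=s^2+\nu_0^2-2s\nu_0\cos\delta$, where $\delta=\theta_2-\theta_1$, depend only on the prescribed data $s,\nu_0,\delta$ and not on $\theta_1$ itself; both are strictly positive because $|\cos\delta|<1$.

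Denoting these fixed quantities by $\rho_P,\rho_Q$, the conditions $\DD(P)=\DD(Q)=0$ read
\[
(1+\rho_P^2)\sqrt{\rho_P}\tanh(\beta^{-1}\sqrt{\rho_P})=\gamma^2 P_1^2,\qquad
(1+\rho_Q^2)\sqrt{\rho_Q}\tanh(\beta^{-1}\sqrt{\rho_Q})=\gamma^2 Q_1^2,
\]
so eliminating $\gamma^2$ yields $P_1^2/Q_1^2=\kappa$ for a known positive constant $\kappa$ depending only on $\beta,s,\nu_0,\delta$. With $P_1=s\sin(\theta_1+\delta)+\nu_0\sin\theta_1$ and $Q_1=s\sin(\theta_1+\delta)-\nu_0\sin\theta_1$, choosing the sign $P_1/Q_1=-\sqrt{\kappa}$ (so that the factor $1+\sqrt{\kappa}$ appearing in the denominator below is always nonzero) converts the relation into the linear equation
\[
\cot\theta_1=\frac{1}{\sin\delta}\!\left(\frac{\nu_0(\sqrt{\kappa}-1)}{s(\sqrt{\kappa}+1)}-\cos\delta\right),
\]
which has a unique solution $\theta_1\in(0,\pi)$ since $\cot:(0,\pi)\to\R$ is bijective and $\sin\delta\neq 0$. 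Back-substitution into either dispersion equation recovers $\gamma^2>0$, with $P_1,Q_1\neq 0$ automatic because $P_1^2/Q_1^2=\kappa>0$ excludes either factor vanishing.

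The main point requiring further attention is compatibility with the convention $\theta_1,\theta_2\in[0,\pi)$: the formula above always produces $\theta_1\in(0,\pi)$, and the possibly out-of-range $\theta_2=\theta_1+\delta$ should be reduced modulo $\pi$, which merely replaces $P,Q$ with $-P,-Q$ and so is harmless in view of the antipodal symmetry of $\DD$. With $\theta_1$ and $\gamma$ thus determined, the final clause of the theorem—that Theorem \ref{First hydro result} applies at this parameter point under the additional non-resonance condition $S_k\cap C_\mathrm{dr}=\emptyset$ for $k\neq\pm 1$—is an immediate invocation of that result.
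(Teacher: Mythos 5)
Your proof is correct, and it reaches exactly the same pair of scalar conditions as the paper, namely $\gamma^2 P_1^2=(1+|P|^4)\,|P|\tanh(\beta^{-1}|P|)$ and the analogous equation at $Q$, but you then solve them by a genuinely different route. The paper argues geometrically: it views the two conditions as $(v\cdot\ell_1)^2=\alpha_1^2$, $(v\cdot\ell_2)^2=\alpha_2^2$ for an unknown vector $v$, notes that these describe two pairs of parallel lines meeting in four points whose distance from the origin depends only on the rotation-invariant quantities $|\ell_1|$, $|\ell_2|$ and the angle between them (all fixed by $s$, $\nu_0$, $\theta_2-\theta_1$), and then rotates the whole configuration so that one solution points along the $x_1$-axis, setting $\gamma=|v|$. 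You exploit the same invariance ($|P|$, $|Q|$ independent of $\theta_1$) to eliminate $\gamma$, reduce to $P_1/Q_1=-\sqrt{\kappa}$, and solve in closed form for $\cot\theta_1$; this buys explicit formulae for $\theta_1$ and $\gamma$, at the cost of some sign and nondegeneracy bookkeeping, whereas the paper's rotation argument is coordinate-free and makes the four-fold multiplicity of solutions transparent. Two small points deserve tightening. First, the claim that $P_1,Q_1\neq 0$ ``because $P_1^2/Q_1^2=\kappa>0$'' is circular as stated (if $Q_1=0$ the ratio is undefined rather than zero); the correct justification is that your solution forces $s\sin\theta_2/(\nu_0\sin\theta_1)=(\sqrt{\kappa}-1)/(\sqrt{\kappa}+1)\in(-1,1)$ with $\sin\theta_1\neq 0$, which excludes both $P_1=0$ and $Q_1=0$. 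Second, reducing $\theta_2$ modulo $\pi$ flips the sign of only the $s$-dependent parts of $P$ and $Q$, so it replaces $(P,Q)$ by $(Q,P)$ rather than by $(-P,-Q)$; since the required four-point set $\{P,Q,-P,-Q\}$ is unchanged either way, the conclusion is unaffected, but the stated reason is not the right one.
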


\section{Proof of the main result} \label{proof of LCT}

In this section we prove Theorem \ref{LCT}, working in the framework set out in Section \ref{Intro - theorem} and under the hypotheses (H1)--(H8) given there.
We look for periodic solutions of \eqref{Eqn in LI theorem} with frequency near $\kappa $ by writing
$$v(t)=u(\tau), \qquad \tau = (\kappa +\mu_2)t,$$
where $\mu_2$ lies in a neighbourhood $\Lambda_2$ of the origin in ${\mathbb R}$, and seeking $2\pi$-periodic solutions
of the transformed equation
\begin{equation}
(\kappa +\mu_2)u_\tau = L^{\mu_1}u + N^{\mu_1}(u). \label{Scaled eqn in LI theorem}
\end{equation}
To this end we introduce the function spaces
\begin{align*}
{\mathcal X}&:=H_{\mathrm{per}}^1(\mathbb{R},Z)\cap L_\mathrm{per}^2(\mathbb{R},X),\\
{\mathcal Z}&:=L_\mathrm{per}^2(\mathbb{R},Z),
\end{align*}
equipping ${\mathcal Z}$ with the continuous scalar product
$$(\cdot\,,\cdot) = \frac{1}{2\pi}\int_0^{2\pi} \langle \cdot\,,\cdot \rangle$$
and noting that elements $w\in {\mathcal Z}$ can be expanded in Fourier series
\begin{equation*}
w(\tau)=\frac{1}{\sqrt{2\pi}}\sum_{k\in\mathbb{Z}}[w]_k \ee^{\ii k\tau},\qquad [w]_k=\frac{1}{\sqrt{2\pi}}\int_0^{2\pi}w(\tau)\ee^{-\ii k\tau}\ \mathrm{d}\tau \in Z.
\end{equation*}

We seek $2\pi$-periodic solutions of \eqref{Scaled eqn in LI theorem} by studying the function
$F:{\mathcal U}\times \Lambda_1\times \Lambda_2\mapsto {\mathcal Z}$ defined by
\begin{equation*}
F(u,\mu_1,\mu_2)=(\kappa +\mu_2)J^{\mu_1}(u)u_\tau-\nabla H^{\mu_1}(u),
\end{equation*}
where
$${\mathcal U}=\{u \in {\mathcal X}: \mbox{$u(\tau) \in U$ for all $\tau \in {\mathbb R}$}\}.$$
The equation
\begin{equation}
F(u,\mu_1,\mu_2)=0, \label{F eqn}
\end{equation}
has a variational characterisation.

\begin{proposition}\label{Euler-Lagrange_A}
Equation \eqref{F eqn} is the Euler-Lagrange equation for the action functional\linebreak
$S:{\mathcal U} \times \Lambda_1 \times \Lambda_2 \rightarrow {\mathbb R}$ given by
$$
S(u,\mu_1,\mu_2)=\frac{1}{2\pi}\int_0^{2\pi}\bigg\{-(\kappa +\mu_2)\langle\alpha^{\mu_1}(u),u_\tau\rangle -H^{\mu_1}(u)\bigg\}\ \mathrm{d}\tau.
$$
Furthermore $S$ is invariant with respect to the translation $T_\theta: u(\tau) \mapsto u(\tau+\theta)$ and
reversing operation $T: u(\tau) \mapsto (Ru)(-\tau)$.
\end{proposition}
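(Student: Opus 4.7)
The plan is to carry out a direct variational computation: differentiate $S$ in an arbitrary direction $w \in \mathcal{X}$, integrate by parts in $\tau$, and use the identification $J^{\mu_1}(u) = \widetilde{\mathrm{d}\alpha}\vphantom{}^{\mu_1}[u] - \widetilde{\mathrm{d}\alpha}\vphantom{}^{\mu_1}[u]^\ast$ supplied by the framework in Section \ref{Intro - theorem} to read off the Euler-Lagrange equation. Concretely, the product rule applied to the symplectic term yields
$$-\frac{1}{2\pi}\int_0^{2\pi}(\kappa+\mu_2)\Big(\langle \widetilde{\mathrm{d}\alpha}\vphantom{}^{\mu_1}[u](w),u_\tau\rangle + \langle \alpha^{\mu_1}(u),w_\tau\rangle\Big)\,\mathrm{d}\tau,$$
and the Hamiltonian term produces $-\frac{1}{2\pi}\int_0^{2\pi}\langle \nabla H^{\mu_1}(u),w\rangle\,\mathrm{d}\tau$.

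Next, I would integrate the $\langle \alpha^{\mu_1}(u),w_\tau\rangle$ piece by parts, observing that $\partial_\tau \alpha^{\mu_1}(u)=\widetilde{\mathrm{d}\alpha}\vphantom{}^{\mu_1}[u](u_\tau)$ and that boundary terms vanish by periodicity, and invoke the defining adjoint relation to rewrite $\langle \widetilde{\mathrm{d}\alpha}\vphantom{}^{\mu_1}[u](w),u_\tau\rangle = \langle w,\widetilde{\mathrm{d}\alpha}\vphantom{}^{\mu_1}[u]^\ast(u_\tau)\rangle$. Collecting the resulting terms gives
$$\mathrm{d}S[u,\mu_1,\mu_2](w)=\frac{1}{2\pi}\int_0^{2\pi}\Big\langle (\kappa+\mu_2)\big(\widetilde{\mathrm{d}\alpha}\vphantom{}^{\mu_1}[u]-\widetilde{\mathrm{d}\alpha}\vphantom{}^{\mu_1}[u]^\ast\big)(u_\tau)-\nabla H^{\mu_1}(u),w\Big\rangle\,\mathrm{d}\tau,$$
so that the Euler-Lagrange equation is precisely $(\kappa+\mu_2)J^{\mu_1}(u)u_\tau-\nabla H^{\mu_1}(u)=0$, that is, $F(u,\mu_1,\mu_2)=0$.

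Invariance under $T_\theta$ is immediate from a change of variable and the periodicity of the integrand. For $T$, write $(Tu)(\tau)=Ru(-\tau)$, so $(Tu)_\tau(\tau)=-Ru_\tau(-\tau)$. The Hamiltonian term transforms correctly since $H^{\mu_1}(Ru(-\tau))=H^{\mu_1}(u(-\tau))$ by (H2), after which $\sigma=-\tau$ and periodicity recover the original integral. For the symplectic term, I would first derive $\alpha^{\mu_1}(Rv)=-R^\ast \alpha^{\mu_1}(v)$ by substituting $v\mapsto Rv$ in the (H2) identity and using $R^2=I$. Then, using $\langle R^\ast a,Rb\rangle=\langle a,R^2 b\rangle=\langle a,b\rangle$, one computes
$$\langle \alpha^{\mu_1}((Tu)(\tau)),(Tu)_\tau(\tau)\rangle = \langle -R^\ast\alpha^{\mu_1}(u(-\tau)),-Ru_\tau(-\tau)\rangle = \langle \alpha^{\mu_1}(u(-\tau)),u_\tau(-\tau)\rangle,$$
and the substitution $\sigma=-\tau$ with periodicity finishes the invariance.

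The only non-routine point is the adjoint step: $\nabla H^{\mu_1}$ is a priori defined only on the dense set $\DD_\mathrm{H}=\Lambda_1\times U$ under (H1), and one is manipulating $\widetilde{\mathrm{d}\alpha}\vphantom{}^{\mu_1}[u]^\ast$ pointwise in $\tau$; however the smoothness and continuity assumptions built into the framework in Section \ref{Intro - theorem} guarantee that all quantities are well-defined elements of $\mathcal{Z}$ and that the Fubini-type manipulations above are legitimate, so no further technicality arises.
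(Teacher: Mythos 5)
Your proposal is correct and follows essentially the same route as the paper's proof: the first variation is computed via the product rule, the $\langle\alpha^{\mu_1}(u),w_\tau\rangle$ term is integrated by parts using periodicity, and the adjoint identity together with $J^{\mu_1}(u)=\widetilde{\mathrm{d}\alpha}\vphantom{}^{\mu_1}[u]-\widetilde{\mathrm{d}\alpha}\vphantom{}^{\mu_1}[u]^\ast$ yields $(F(u,\mu_1,\mu_2),w)$. The paper merely asserts the invariance part as a consequence of periodicity and (H2), whereas you write out that (entirely routine) verification explicitly; there is no substantive difference.
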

\begin{proof}
The first assertion follows from the calculation
\begin{align*}
\mathrm{d}_1& S[u,\mu_1,\mu_2](v)\\
&=\frac{1}{2\pi}\int_0^{2\pi}\bigg\{-(\kappa +\mu_2)\big(\langle\widetilde{\mathrm{d}\alpha}\vphantom{}^{\mu_1}[u](v),u_\tau\rangle+\langle \alpha^{\mu_1}(u),v_\tau\rangle\big) -\langle\nabla H^{\mu_1}(u),v\rangle\bigg\}\ \mathrm{d}\tau\\
&=\frac{1}{2\pi}\int_0^{2\pi}\bigg\{-(\kappa +\mu_2)\big(\langle\widetilde{\mathrm{d}\alpha}\vphantom{}^{\mu_1}[u](v),u_\tau\rangle-\langle v,\widetilde{\mathrm{d}\alpha}\vphantom{}^{\mu_1}[u](u_\tau)\rangle\big)-\langle\nabla H^{\mu_1}(u),v\rangle\bigg\}\ \mathrm{d}\tau\\
&=\frac{1}{2\pi}\int_0^{2\pi}\langle (\kappa +\mu_2)J^{\mu_1}(u)u_\tau-\nabla H^{\mu_1}(u),v\rangle\ \mathrm{d}\tau \\
&=(F(u,\mu_1,\mu_2),v)
\end{align*}
for $v \in {\mathcal X}$, while the second is a consequence of the periodicity of $u$ and hypothesis (H2).
\end{proof}

The next step is a Lyapunov-Schmidt reduction. Define
\begin{align*}
{\mathcal W}_1&=\{u_0+Ae_1\ee^{\ii \tau}+Be_2\ee^{\ii \tau}+\bar{A}\bar{e}_1\ee^{-\ii \tau}+\bar{B}\bar{e}_2\ee^{-\ii \tau},\ A,B\in \mathbb{C},\ u_0\in Z\},\\
{\mathcal W}_2&=\{u\in {\mathcal Z}\colon [u]_0=0,\ \Pi_{\ii \kappa }[u]_1=\Pi_{-\ii \kappa }[u]_{-1}=0 \},
\end{align*}
where $\Pi_{\pm\ii \kappa }$ are the orthogonal projections onto the eigenspaces
$E_{\ii \kappa }=\mathrm{span}\{e_1,e_2\}$ and
$E_{-\ii \kappa }=\mathrm{span}\{\bar{e}_1,\bar{e}_2\}$, so that
\begin{align*}
{\mathcal Z}&={\mathcal W}_1\oplus {\mathcal W}_2,\\
{\mathcal X}&=({\mathcal W}_1\cap{\mathcal X})\oplus ({\mathcal W}_2\cap{\mathcal X})
\end{align*}
and the decompositions are orthogonal. Let $\widetilde{\Pi}_{{\mathcal W}_1}$ be the projection of ${\mathcal Z}$ onto ${\mathcal W}_1$ along ${\mathcal W}_2$, write
$u\in {\mathcal U}$ as 
\begin{equation*}
u=\underbrace{\widetilde{\Pi}_{{\mathcal W}_1} u}_{\displaystyle =:u_{{\mathcal W}_1}}+\underbrace{(I-\widetilde{\Pi}_{{\mathcal W}_1}) u}_{\displaystyle =:u_{{\mathcal W}_2}}
\end{equation*}
and equation \eqref{F eqn} as
\begin{align}
\widetilde{\Pi}_{{\mathcal W}_1} F(u_{{\mathcal W}_1}+u_{{\mathcal W}_2},\mu_1,\mu_2)&=0, \label{kernel-eq}\\
(I-\widetilde{\Pi}_{{\mathcal W}_1}) F(u_{{\mathcal W}_1}+u_{{\mathcal W}_2},\mu_1,\mu_2)&=0\label{range-eq}.
\end{align}

To solve equation \eqref{range-eq} (for $u_{{\mathcal W}_2}$ as a function of $u_{{\mathcal W}_1}$, $\mu_1$ and $\mu_2$)
 it is necessary to examine the solvability conditions for the equations
\begin{equation}
(\pm\ii \kappa  I - L)u = J^0(0)^{-1}f \label{Solve it 1}
\end{equation}
and
\begin{equation}
Lu = J^0(0)^{-1}f, \label{Solve it 2}
\end{equation}
where $f$ is a given function in $Z$. Normalise $e_1^{\mu_1}$, $e_2^{\mu_1}$ such that
$$\Omega^{\mu_1}|_0(e_1^{\mu_1},\bar{e}_1^{\mu_1})=\pm \ii, \quad \Omega^{\mu_1}|_0(e_2^{\mu_1},\bar{e}_2^{\mu_1}) = \pm \ii, \quad
\Omega^{\mu_1}|_0(e_1^{\mu_1},e_2^{\mu_1})=0, \quad \Omega^{\mu_1}|_0(e_1^{\mu_1},\bar{e}_2^{\mu_1})=0$$
and $f_1$, $f_2$ such that
$$\Omega^0|_0(f_1,f_2)=1,$$
where $\Omega^{\mu_1}|_0$ is extended \emph{bilinearly} to the complexification of $Z$. Observing that $L$ is the Hamiltonian vector field
for the linear Hamiltonian system $(Z,\Omega^0|_0,H_2^0)$, we find that the spectral projections $P_{\pm\ii \kappa}$ and $P_0$
onto the eigenspaces $E_{\ii \kappa }=\mathrm{span}\{e_1,e_2\},\ E_{-\ii \kappa }=\mathrm{span}\{\bar{e}_1,\bar{e}_2\}$ and generalised eigenspace $E_0=\mathrm{span}\{f_1,f_2\}$ are given by
\begin{align*}
P_{\ii \kappa } u &= \sum_{i=1}^2s_i\Omega^0|_0(u,\bar{e}_i)e_i= \sum_{i=1}^2 s_i \langle J^0(0)(u),e_i\rangle e_i \\
P_{-\ii \kappa }u&=-\sum_{i=1}^2s_i\Omega^0|_0(u,e_i)\bar{e}_i=- \sum_{i=1}^2 s_i \langle J^0(0)(u),\bar{e}_i\rangle\bar{e}_i,
\end{align*}
where $s_i=-\Omega^{\mu_1}|_0(e_i^{\mu_1},\bar{e}_i^{\mu_1})$,
and
\begin{align*}
P_0 u &= \Omega^0|_0(u,f_2)f_1 - \Omega^0|_0(u,f_1)f_2, \\
& = \langle J^0(0)(u),f_2 \rangle f_1 - \langle J^0(0)(u),f_1 \rangle f_2
\end{align*}
(see Mielke \cite[\S3.1]{Mielke}); here $\langle \cdot\,,\cdot \rangle$ is extended \emph{sesquilinearly} to the complexification of $Z$.
This observation shows in particular that the (necessary and sufficient) solvability condition for \eqref{Solve it 1},
namely that the spectral projection of its right-hand side onto $E_{\pm\ii\kappa }$ vanishes, is equivalent to the requirement
that the orthogonal projection $\Pi_{\pm \ii\kappa } f$ of $f$ onto $E_{\pm\ii\kappa }$ vanishes. In this case it has a unique solution
in the orthogonal complement of $E_{\pm \ii\kappa }$ in $X$ which depends continuously upon $f$.
Similarly, equation
\eqref{Solve it 2} is solvable if the orthogonal projection $\Pi_0f$ of $f$ onto $E_0$ vanishes (note that this is merely a sufficient condition), and in this case
has a unique solution in the orthogonal complement of $E_0$ in $X$ which depends continuously upon $f$.
In the following analysis we use the convention that
$f_1=f_2=0$ and hence $P_0=\Pi_0=0$ if $0$ is not an eigenvalue of $L$.

\begin{proposition} \label{First iso}
The linear operator
$$
(I-\widetilde{\Pi}_{{\mathcal W}_1})\mathrm{d_1}F[0,0,0]\colon ({\mathcal W}_2\cap {\mathcal X})\rightarrow {\mathcal W}_2
$$
is an isomorphism.
\end{proposition}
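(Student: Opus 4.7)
The plan is to compute $\mathrm{d}_1F[0,0,0]$ explicitly and then invert the resulting constant-coefficient (in $\tau$) operator mode by mode via a Fourier expansion. Differentiating
$F(u,\mu_1,\mu_2)=(\kappa+\mu_2)J^{\mu_1}(u)u_\tau-\nabla H^{\mu_1}(u)$ at the origin and invoking hypothesis (H1), one first obtains
$$
\mathrm{d}_1F[0,0,0]v=J^0(0)(\kappa\partial_\tau-L)v,
$$
whose $k$-th Fourier coefficient is $J^0(0)(\ii k\kappa I - L)[v]_k$.

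The next step is to observe that $\mathcal{W}_1$ is spanned by the $k=0$ constant mode together with the $E_{\pm\ii\kappa}$-components of the $k=\pm 1$ modes, so that the orthogonal decomposition $\mathcal{Z}=\mathcal{W}_1\oplus\mathcal{W}_2$ respects the Fourier decomposition and $\widetilde{\Pi}_{\mathcal{W}_1}$ annihilates mode $0$, acts as $\Pi_{\pm\ii\kappa}$ on modes $\pm 1$, and is the identity elsewhere. For $g\in\mathcal{W}_2$ the equation $(I-\widetilde{\Pi}_{\mathcal{W}_1})\mathrm{d}_1F[0,0,0]v=g$ consequently decouples: the $k=0$ equation is vacuous, for $|k|\geq 2$ one has $(\ii k\kappa I - L)[v]_k=J^0(0)^{-1}[g]_k$, and for $k=\pm 1$ one has $(I-\Pi_{\pm\ii\kappa})J^0(0)(\pm\ii\kappa I-L)[v]_{\pm 1}=[g]_{\pm 1}$ with $[v]_{\pm 1}\in\Pi_{\pm\ii\kappa}^\perp X$. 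For $|k|\geq 2$ hypothesis (H5) yields the unique solution $[v]_k=(\ii k\kappa I - L)^{-1}J^0(0)^{-1}[g]_k$, and (H6) supplies the bounds $\|[v]_k\|_Z\lesssim|k|^{-1}\|[g]_k\|_Z$ and $\|[v]_k\|_X\lesssim\|[g]_k\|_Z$, so that Parseval's identity gives $v\in\mathcal{X}$ with $\|v\|_{\mathcal{X}}\lesssim\|g\|_{\mathcal{Z}}$.

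The main obstacle is the analysis of the $k=\pm 1$ modes, where the orthogonal projection $\Pi_{\pm\ii\kappa}$ interacts nontrivially with $J^0(0)(\pm\ii\kappa I - L)$ and the natural solvability conditions are phrased in terms of the spectral projections $P_{\pm\ii\kappa}$. The key here is the observation recorded immediately before the proposition that $P_{\pm\ii\kappa}f=0$ is equivalent to $\Pi_{\pm\ii\kappa}f=0$ for any $f\in Z$. Taking $k=1$ (the case $k=-1$ being analogous): since $\Pi_{\ii\kappa}[g]_1=0$, this equivalence ensures that $J^0(0)^{-1}[g]_1$ satisfies the solvability condition, so $(\ii\kappa I-L)[v]_1=J^0(0)^{-1}[g]_1$ admits a unique solution $[v]_1\in\Pi_{\ii\kappa}^\perp X$; then $J^0(0)(\ii\kappa I-L)[v]_1=[g]_1\in\Pi_{\ii\kappa}^\perp Z$ outright, so applying $(I-\Pi_{\ii\kappa})$ recovers $[g]_1$. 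For injectivity, any $[v]_1\in\Pi_{\ii\kappa}^\perp X$ whose image under the restricted operator vanishes satisfies $(\ii\kappa I-L)[v]_1=J^0(0)^{-1}h$ for some $h\in E_{\ii\kappa}$; the range of $\ii\kappa I-L$ lies in $\ker P_{\ii\kappa}$, so the formula $P_{\ii\kappa}(J^0(0)^{-1}h)=\sum_i s_i\langle h,e_i\rangle e_i$ must vanish, and linear independence of $\{e_1,e_2\}$ (giving a positive-definite Hermitian Gram matrix) forces $h=0$, whence $[v]_1\in\ker(\ii\kappa I-L)\cap\Pi_{\ii\kappa}^\perp X=\{0\}$. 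Assembling the mode-wise solutions produces a bounded two-sided inverse, establishing the claimed isomorphism.
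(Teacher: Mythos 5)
Your proof is correct and follows essentially the same route as the paper's: compute $\mathrm{d}_1F[0,0,0]v=J^0(0)(\kappa\partial_\tau-L)v$, decouple into Fourier modes, invert the $|k|\geq 2$ blocks via (H5) with the uniform bounds from (H6), treat $k=\pm 1$ using the solvability discussion for $(\pm\ii\kappa I-L)u=J^0(0)^{-1}f$ recorded just before the proposition, and assemble via Parseval. The only difference is one of detail: you spell out explicitly why the projected $k=\pm 1$ equation is equivalent to the unprojected one on $E_{\pm\ii\kappa}^\perp$ (surjectivity via the range characterisation, injectivity via the Gram-matrix argument for $P_{\pm\ii\kappa}J^0(0)^{-1}h$), a point the paper simply asserts.
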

\begin{proof}
The equation
\begin{equation}
(I-\widetilde{\Pi}_{{\mathcal W}_1})\mathrm{d}_1F[0,0,0](v)=w \label{First LS equation}
\end{equation}
with $w \in {\mathcal W}_2$ is equivalent to
$$
(\ii \kappa  kI-L)[v]_k=J^0(0)^{-1}[w]_k, \qquad k\in \mathbb{Z} \setminus \{0\},
$$
with $[w]_k \in Z$, $k \not\in \{0,-1,1\}$ and $[w]_1 \in E_{\ii \kappa }^\perp$, $[w]_{-1} \in E_{-\ii \kappa }^\perp$ (in $X$).
By assumption (H5) the operator $\ii \kappa  kI-L\colon X\rightarrow Z$ is an isomorphism for $k\notin \{0,-1,1\}$
and we have established that the equations
\begin{align*}
(\ii \kappa I-L)[v]_1&=J^0(0)^{-1}[w]_1,\\
(-\ii \kappa I-L)[v]_{-1}&=J^0(0)^{-1}[w]_{-1}
\end{align*}
have unique solutions $[v]_1 \in E_{\ii \kappa }^\perp$, $[v]_{-1} \in E_{-\ii \kappa }^\perp$ (in $X$) which depend continuously upon $[w]_1$, $[w]_{-1}$.
It follows that
\begin{align*}
\norm{v}_{L^2_\mathrm{per}(\mathbb{R},X)}^2&=\sum_{k\in\mathbb{Z}\backslash \{0,-1,1\}}\norm{[v]_k}_X^2+\|[v]_1\|_X^2+\|[v]_{-1}\|_X^2\\
&=\sum_{k\in \mathbb{Z}\backslash\{0,-1,1\}}\norm{(\ii \kappa kI-L)^{-1}J^0(0)^{-1}[w]_k}_X^2+\|[v]_1\|_X^2+\|[v]_{-1}\|_X^2\\
&\lesssim \sum_{k\in \mathbb{Z}\backslash\{0,-1,1\}}\norm{[w]_k}_Z^2+\|[w]_1\|_Z^2+\|[w]_{-1}\|_Z^2\\
&\leq \norm{w}_{{\mathcal Z}}^2
\end{align*}
and similarly
\[
\norm{v}_{H_\mathrm{per}^1(\mathbb{R},Z)}\lesssim \norm{w}_{{\mathcal Z}}
\]
(by assumption (H6)), so that $v$ lies in ${\mathcal X}$. We conclude that equation \eqref{First LS equation} has a unique solution $v \in {\mathcal W}_2 \cap {\mathcal X}$
which depends continuously upon $w \in {\mathcal W}_2$.
\end{proof}

\begin{lemma}
There exist neighbourhoods ${\mathcal U}_1$ and ${\mathcal U}_2$ of the origin in respectively ${\mathcal W}_1$ and ${\mathcal W}_2$ and
a reduction function $u_{{\mathcal W}_2}: {\mathcal U}_1 \times \Lambda_1 \times \Lambda_2 \rightarrow {\mathcal U}_2$ such that
equation \eqref{range-eq} admits the unique solution $(u_{{\mathcal W}_1},u_{{\mathcal W}_2}(u_{{\mathcal W}_1},\mu_1,\mu_2))$ 
in ${\mathcal U}_1 \times {\mathcal U}_2$. Furthermore $u_{{\mathcal W}_1}(0,0,0)=0$ and
$\mathrm{d}u_{{\mathcal W}_1}[0,0,0]=0$.
\end{lemma}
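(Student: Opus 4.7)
The plan is to apply the implicit function theorem to the mapping
$$G(u_1,u_2,\mu_1,\mu_2) := (I-\widetilde{\Pi}_{\mathcal{W}_1})F(u_1+u_2,\mu_1,\mu_2),$$
regarded as a smooth map from a neighbourhood of the origin in $(\mathcal{W}_1\cap\mathcal{X}) \times (\mathcal{W}_2\cap\mathcal{X}) \times \Lambda_1 \times \Lambda_2$ into $\mathcal{W}_2$, and to solve $G=0$ for $u_2$ in terms of $(u_1,\mu_1,\mu_2)$. Three standard verifications are required: smoothness of $G$, the equality $G(0,0,0,0)=0$, and the isomorphism property of the partial derivative $\mathrm{d}_{u_2} G[0,0,0,0]$.

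For the first I would appeal to the smoothness of the Nemytskii-type operators associated with $v\mapsto J^{\mu_1}(v)v_\tau$ and $v\mapsto \nabla H^{\mu_1}(v)$; this follows from the pointwise smoothness assumptions on $J^{\mu_1}$, $\alpha^{\mu_1}$ and $\nabla H^{\mu_1}$ together with the continuous embedding $H_{\mathrm{per}}^1(\mathbb{R},Z)\hookrightarrow C(\mathbb{R},Z)$, which keeps elements of $\mathcal{U}$ pointwise inside $U$. The second is immediate from $F(0,\mu_1,\mu_2)=(\kappa+\mu_2)J^{\mu_1}(0)\cdot 0 - \nabla H^{\mu_1}(0)=0$, where $\nabla H^{\mu_1}(0)=0$ by virtue of (H1) and the standing assumption $\mathrm{d}H^{\mu_1}[0]=0$. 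The third is precisely the content of Proposition \ref{First iso}.

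The IFT will then deliver neighbourhoods $\mathcal{U}_1,\mathcal{U}_2$ and a smooth reduction function $u_{\mathcal{W}_2}$ with $u_{\mathcal{W}_2}(0,0,0)=0$. To obtain the derivative condition $\mathrm{d}u_{\mathcal{W}_2}[0,0,0]=0$ I would invoke the IFT formula and check that each of $\mathrm{d}_{u_1}G[0,0,0,0]$, $\mathrm{d}_{\mu_1}G[0,0,0,0]$ and $\mathrm{d}_{\mu_2}G[0,0,0,0]$ vanishes. For the first, note that $\mathrm{d}_1 F[0,0,0](w)=J^0(0)(\kappa w_\tau-Lw)$: on the mode $\pm 1$ eigenvector components of $w\in\mathcal{W}_1$ the bracket vanishes identically since $L e_i = \ii\kappa e_i$ and $L\bar e_i = -\ii\kappa \bar e_i$, while on the constant component $u_0$ it produces $-J^0(0)Lu_0$, which lies entirely in the $k=0$ Fourier mode and is therefore annihilated by $(I-\widetilde{\Pi}_{\mathcal{W}_1})$. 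The derivatives with respect to $\mu_1$ and $\mu_2$ vanish because $u_\tau=0$ at $u=0$ and $\nabla H^{\mu_1}(0)\equiv 0$ for every $\mu_1$. The principal obstacle I anticipate is the verification of smoothness of the Nemytskii maps in the (possibly quasilinear) infinite-dimensional setting, but this is routine given the standing framework of Section \ref{Intro - theorem}; note that hypothesis (H8) plays no role at this stage because $\mathcal{W}_2$ has zero $k=0$ Fourier mode by construction, so the delicate zero-mode equation is not encountered here.
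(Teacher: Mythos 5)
Your proposal is correct and follows exactly the route the paper takes: the paper's proof consists of the single sentence that the result follows from the implicit-function theorem and Proposition \ref{First iso}, which is precisely your invertibility input for $\mathrm{d}_{u_2}G[0,0,0,0]$. Your additional verifications (smoothness of $F$, vanishing of $F(0,\mu_1,\mu_2)$, and the vanishing of the partial derivatives in $u_1$, $\mu_1$, $\mu_2$ at the origin, which gives $\mathrm{d}u_{\mathcal{W}_2}[0,0,0]=0$ via the IFT formula) are all accurate and simply make explicit what the paper leaves implicit.
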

\begin{proof}
This result follows from the implicit-function theorem and Proposition \ref{First iso}.
\end{proof}

The next step is to further decompose the reduced equation
\begin{equation}
\widetilde{\Pi}_{{\mathcal W}_1} F(u_{{\mathcal W}_1}+u_{{\mathcal W}_2}(u_{{\mathcal W}_1},\mu_1,\mu_2),\mu_1,\mu_2)=0
\label{Red step 1}
\end{equation}
by introducing the orthogonal projection $\widetilde{\Pi}$ of ${\mathcal Z}$ onto
\begin{equation*}
{\mathcal W}_{1,1} =\{q f_1 + p f_2
+Ae_1\ee^{\ii \tau}+Be_2\ee^{\ii \tau}+\bar{A}\bar{e}_1\ee^{-\ii \tau}+\bar{B}\bar{e}_2\ee^{-\ii \tau},\ q,p \in {\mathbb R},\ A,B\in \mathbb{C}\} ,
\end{equation*}
which is given by
\begin{equation*}
\widetilde{\Pi}  u=\Pi_0[u]_0+\ee^{\ii \tau}\Pi_{\ii \kappa }[u]_1+\ee^{-\ii \tau}\Pi_{-\ii \kappa }[u]_{-1},
\end{equation*}
so that
$${\mathcal W}_1={\mathcal W}_{1,1} \oplus {\mathcal W}_{1,2},$$
where ${\mathcal W}_{1,2}=(I-\widetilde{\Pi}){\mathcal W}_1$.
Writing $u_{{\mathcal W}_1} \in {\mathcal U}_1$ as
\[
u_{{\mathcal W}_1}=\underbrace{\widetilde{\Pi} u_{{\mathcal W}_1}}_{\displaystyle =:u_1} + \underbrace{(I-\widetilde{\Pi} )u_{{\mathcal W}_1}}_{\displaystyle =:u_2},
\]
we find that \eqref{Red step 1} is equivalent to
\begin{align}
\widetilde{\Pi} G(u_1,u_2,\mu_1,\mu_2) &=0,\label{PP-eq} \\
\underbrace{(I-\widetilde{\Pi})G(u_1,u_2,\mu_1,\mu_2)}_{\displaystyle  = (I-\Pi_0)[G(u_1,u_2,\mu_1,\mu_2)]_0} \hspace{-5mm}&=0, \label{QQ-eq}
\end{align}
where
$$G(u_1,u_2,\mu_1,\mu_2)=F(u_1+u_2+u_{{\mathcal W}_2}(u_1+u_2,\mu_1,\mu_2),\mu_1,\mu_2).$$
We proceed with a further reduction of Lyapunov-Schmidt type. Solving equation \eqref{QQ-eq} for $u_2$ in terms of $u_1$, $\mu_1$ and $\mu_2$
requires hypothesis (H8) if the origin lies in the continuous spectrum of $L$ or is an eigenvalue embedded in the continuous spectrum.

\begin{lemma}

There exist neighbourhoods ${\mathcal U}_{1,1}$ and ${\mathcal U}_{1,2}$ of the origin in respectively ${\mathcal W}_{1,1}$ and ${\mathcal W}_{1,2}$ and
a reduction function $u_2: {\mathcal U}_{1,1} \times \Lambda_1 \times \Lambda_2 \rightarrow {\mathcal U}_{1,2}$ such that
equation \eqref{QQ-eq} admits the unique solution $(u_1,u_2(u_1,\mu_1,\mu_2))$ 
in ${\mathcal U}_{1,1} \times {\mathcal U}_{1,2}$. Furthermore $u_1(0,0,0)=0$ and
$\mathrm{d}u_1[0,0,0]=0$.
\end{lemma}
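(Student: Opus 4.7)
The plan is to apply the implicit function theorem to equation \eqref{QQ-eq}, which I will first rewrite in a form amenable to Hypothesis (H8).

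Using the Hamiltonian identity $F(u, \mu_1, \mu_2) = J^{\mu_1}(u)\bigl((\kappa + \mu_2) u_\tau - L^{\mu_1} u - N^{\mu_1}(u)\bigr)$ from (H1), together with $L f_1 = 0$, $L f_2 = f_1$ and $[u_{\mathcal{W}_2}]_0 = 0$, a direct computation of the zero-th Fourier coefficient gives
\[
[G(u_1, u_2, \mu_1, \mu_2)]_0 = N^\star\bigl(u_1 + u_2 + u_{\mathcal{W}_2}(u_1+u_2, \mu_1, \mu_2), \mu_1, \mu_2\bigr) - \sqrt{2\pi}\, J^0(0)(p f_1 + L u_2),
\]
where $p$ denotes the coefficient of $f_2$ in $u_1 \in {\mathcal W}_{1,1}$ and $\bar{u} = qf_1 + pf_2 + u_2$ is the $\tau$-mean of $u$. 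Equation \eqref{QQ-eq} thus becomes equivalent to
\[
\sqrt{2\pi}\,(I - \Pi_0) J^0(0) L u_2 \;=\; (I - \Pi_0)\Bigl[N^\star\bigl(u_1 + u_2 + u_{\mathcal{W}_2}, \mu_1, \mu_2\bigr) - \sqrt{2\pi}\, p\, J^0(0) f_1\Bigr],
\]
whose left-hand side is manifestly of the type treated by Hypothesis (H8).

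Next I would apply the implicit function theorem. The right-hand side is a smooth function of $(u_1, u_2, \mu_1, \mu_2)$ into $(I - \Pi_0) Z$ which vanishes at the origin, while the left-hand side is linear in $u_2 \in {\mathcal W}_{1,2} \cap {\mathcal X} \cong (I - \Pi_0) X$. The isomorphism property required for solvability in $u_2$ is precisely the content of Hypothesis (H8) (read as a statement about the continuous invertibility of $L$ on $(I - \Pi_0)X$ into $J^0(0)^{-1}(I - \Pi_0) Z$), combined with the observation that the finite-rank correction $(I - \Pi_0) J^0(0)L - J^0(0)L$ on $(I - \Pi_0)X$ is a compact perturbation that can be absorbed into the right-hand side. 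In the cases where $0 \in \rho(L)$ or $0$ is an isolated eigenvalue of $L$, $\Pi_0$ reduces to the spectral projection, $f_1 = f_2 = 0$ by convention, and the invertibility follows from (H6); Hypothesis (H8) is superfluous, as noted in the Remark following it. The implicit function theorem then yields the required smooth reduction function $u_2(u_1, \mu_1, \mu_2)$ near the origin, and $u_2(0, 0, 0) = 0$, $\mathrm{d} u_2[0, 0, 0] = 0$ follow because both sides of the reformulated equation vanish at the origin and, thanks to $u_{\mathcal{W}_2}(0, 0, 0) = 0$ and $\mathrm{d}_1 u_{\mathcal{W}_2}[0, 0, 0] = 0$, the right-hand side has vanishing first derivative with respect to $u_1$ there.

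The main obstacle is the correct deployment of Hypothesis (H8): since $\Pi_0$ is the orthogonal rather than the spectral projection onto $E_0$, it need not commute with $J^0(0)$ or $L$, and the resulting finite-rank discrepancy (together with the $pJ^0(0)f_1$ term originating from the generalised eigenvector) has to be tracked carefully. Once this is done the proof is structurally identical to the preceding lemma, with the $L$-inversion on $(I - \Pi_0)X$ supplied by (H8) playing the role that the resolvent bounds of (H6) played in Proposition \ref{First iso}.
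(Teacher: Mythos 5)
Your reduction of equation \eqref{QQ-eq} to an equation of the form ``$L$ applied to $u_2$ equals a right-hand side built from $N^\star$'' matches the paper's first step (the paper simply asserts the equivalence of \eqref{QQ-eq} with $Lu_2=J^0(0)^{-1}(I-\Pi_0)N^\star(u_1+u_2+u_{{\mathcal W}_2},\mu_1,\mu_2)$, and your bookkeeping of the $pJ^0(0)f_1$ term and of the possible non-commutation of $\Pi_0$ with $J^0(0)L$ is a reasonable account of what that assertion suppresses). The gap is in how you then solve for $u_2$. You read hypothesis (H8) as asserting that $L$ is continuously invertible from $(I-\Pi_0)X$ onto $J^0(0)^{-1}(I-\Pi_0)Z$ and propose to invert the linear left-hand side. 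That is not what (H8) says, and it is false precisely in the situation the theorem is designed for: as the Remark following (H8) points out, the hypothesis is only nontrivial when the origin lies in the continuous spectrum of $L$ (or is an eigenvalue embedded in it), in which case $L$ has no bounded inverse on $(I-\Pi_0)X$ --- in the hydroelastic application with $\beta=0$ the equation $L\hat v=\hat v^\star$ is solvable only for right-hand sides satisfying a regularity requirement and a compatibility condition (Lemma \ref{lemma_spectrum}(iii)). Hypothesis (H8) is a statement about the \emph{composite} map: for the specific right-hand sides produced by $N^\star$, the equation $Lu^\dag=J^0(0)^{-1}(I-\Pi_0)N^\star(u,\mu_1,\mu_2)$ has a unique solution depending smoothly upon $(u,\mu_1,\mu_2)$.

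The paper's proof uses exactly this: it lets $v(u_1,u_2,\mu_1,\mu_2)$ be the solution furnished by (H8) with $u=u_1+u_2+u_{{\mathcal W}_2}(u_1+u_2,\mu_1,\mu_2)$, rewrites \eqref{QQ-eq} as $\Upsilon(u_1,u_2,\mu_1,\mu_2):=u_2-v(u_1,u_2,\mu_1,\mu_2)=0$, and applies the implicit-function theorem to $\Upsilon$, noting that $\Upsilon(0,0,0,0)=0$ and $\mathrm{d}_2\Upsilon[0,0,0,0]=I$ (because $N^\star$, and hence $v$, has vanishing derivative at the origin). The operator inverted at the linearisation step is therefore the identity, not $L$; no isomorphism property of $L$ near zero is used or available. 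Your subsidiary remarks --- that the finite-rank discrepancy is a ``compact perturbation that can be absorbed into the right-hand side'' while the left-hand side is still to be inverted, and that the invertibility follows from (H6) when $0\in\rho(L)$ --- are also off target ((H6) concerns the resolvent at $\ii k\kappa$ for $k\neq 0,\pm1$, not at the origin), but they are secondary to the main issue.
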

\begin{proof}
%
Equation \eqref{QQ-eq} is equivalent to
\begin{equation*}
Lu_2=J^0(0)^{-1}(I-\Pi_0)N^\star(u_1+u_2+u_{{\mathcal W}_2}(u_1+u_2,\mu_1,\mu_2),\mu_1,\mu_2),
\end{equation*}
Let $v(u_1,u_2,\mu_1,\mu_2)$ be the unique solution of the equation
$$Lv=J^0(0)^{-1}(I-\Pi_0)N^\star(u_1+u_2+u_{{\mathcal W}_2}(u_1+u_2,\mu_1,\mu_2),\mu_1,\mu_2)$$
and define $\Upsilon: ({\mathcal U}_1 \cap {\mathcal W}_{1,1}) \times
({\mathcal U}_1 \cap {\mathcal W}_{1,2}) \times \Lambda_1 \times \Lambda_2 \rightarrow
{\mathcal W}_{1,2}$ by
\[
\Upsilon(u_1,u_2,\mu_1,\mu_2)=u_2-v(u_1,u_2,\mu_1,\mu_2).
\]
Observing that
$\Upsilon(0,0,0,0)=0$, $\mathrm{d}_2\Upsilon[0,0,0,0]=I$, one therefore obtains the result from
the implicit-function theorem.
\end{proof}

The reduced equation
$$\widetilde{\Pi} G(u_1,u_2(u_1,\mu_1,\mu_2),\mu_1,\mu_2)=0$$
is conveniently written as
\begin{equation}\label{reduced-eq}
f(u_1 ,\mu_1,\mu_2)=0,
\end{equation}
where
\begin{equation*}
f(u_1 ,\mu_1,\mu_2)=\widetilde{\Pi} F(u_1 +h(u_1 ,\mu_1,\mu_2),\mu_1,\mu_2)
\end{equation*}
and the new reduction function
$h: {\mathcal U}_{1,1} \times \Lambda_1 \times \Lambda_2 
\rightarrow (I-\widetilde{\Pi}){\mathcal X}$ is given by
\begin{equation*}
h(u_1,\mu_1,\mu_2)=u_2(u_1 ,\mu_1,\mu_2)+u_{{\mathcal W}_2}(u_1+u_2(u_1 ,\mu_1,\mu_2),\mu_1,\mu_2).
\end{equation*}
Note again that $h(0,0,0)=0$ and $\mathrm{d}_1h[0,0,0]=0$.

Equation \eqref{reduced-eq} inherits the variational structure of \eqref{F eqn}.

\begin{proposition}
Equation \eqref{reduced-eq} is the Euler-Lagrange equation for the reduced action functional $s: {\mathcal U}_{1,1} \times \Lambda_1 \times \Lambda_2 \rightarrow
{\mathbb R}$ given by
\begin{align*}
s(u_1 ,\mu_1,\mu_2)&=S(u_1 +h(u_1,\mu_1,\mu_2),\mu_1,\mu_2),
\end{align*}
that is
\begin{equation}\label{hamiltonian_vecfield}
\mathrm{d}_1s[u_1 ,\mu_1,\mu_2](v_1 )=(f(u_1 ,\mu_1,\mu_2),v_1)
\end{equation}
for all $v_1 \in \widetilde{\Pi} {\mathcal X}$.
\end{proposition}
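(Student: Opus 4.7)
The plan is to differentiate $s$ using the chain rule and then appeal to Proposition \ref{Euler-Lagrange_A} to convert derivatives of $S$ into pairings with $F$. Explicitly, I would compute
\[
\mathrm{d}_1 s[u_1,\mu_1,\mu_2](v_1)
= \mathrm{d}_1 S[u_1+h(u_1,\mu_1,\mu_2),\mu_1,\mu_2]\bigl(v_1+\mathrm{d}_1 h[u_1,\mu_1,\mu_2](v_1)\bigr),
\]
and then invoke Proposition \ref{Euler-Lagrange_A} to rewrite the right-hand side as
\[
\bigl(F(u_1+h(u_1,\mu_1,\mu_2),\mu_1,\mu_2),\,v_1+\mathrm{d}_1 h[u_1,\mu_1,\mu_2](v_1)\bigr).
\]
The target identity \eqref{hamiltonian_vecfield} then follows once we show that the $\mathrm{d}_1 h\cdot v_1$ term contributes nothing and that the $v_1$ term can be written with $\widetilde{\Pi}F$ in place of $F$.

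For the first reduction, I would use that by construction $h(u_1,\mu_1,\mu_2)=u_2(u_1,\mu_1,\mu_2)+u_{{\mathcal W}_2}(u_1+u_2(u_1,\mu_1,\mu_2),\mu_1,\mu_2)$ solves both Lyapunov--Schmidt equations \eqref{range-eq} and \eqref{QQ-eq}. Together these say precisely that $(I-\widetilde{\Pi})F(u_1+h(u_1,\mu_1,\mu_2),\mu_1,\mu_2)=0$, since $(I-\widetilde{\Pi})$ is the sum of the range projection $(I-\widetilde{\Pi}_{{\mathcal W}_1})$ and the projection onto $\mathcal{W}_{1,2}$ (which is $\widetilde{\Pi}_{{\mathcal W}_1}-\widetilde{\Pi}$). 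Because $h$ takes values in $(I-\widetilde{\Pi})\mathcal{X}$, the same is true of its derivative $\mathrm{d}_1 h[u_1,\mu_1,\mu_2](v_1)$, and consequently
\[
\bigl(F(u_1+h,\mu_1,\mu_2),\,\mathrm{d}_1 h[u_1,\mu_1,\mu_2](v_1)\bigr)
=\bigl((I-\widetilde{\Pi})F(u_1+h,\mu_1,\mu_2),\,\mathrm{d}_1 h[u_1,\mu_1,\mu_2](v_1)\bigr)=0.
\]

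The second piece uses that $v_1\in\widetilde{\Pi}\mathcal{X}$ together with the orthogonality of the decomposition $\mathcal{Z}=\widetilde{\Pi}\mathcal{Z}\oplus (I-\widetilde{\Pi})\mathcal{Z}$ with respect to $(\cdot,\cdot)$, which in turn is inherited from the orthogonality of distinct Fourier modes in $(\cdot,\cdot)$ and the fact that $\Pi_0$ and $\Pi_{\pm\mathrm{i}\kappa}$ are orthogonal projections in $Z$. This yields $(F(u_1+h,\mu_1,\mu_2),v_1)=(\widetilde{\Pi}F(u_1+h,\mu_1,\mu_2),v_1)=(f(u_1,\mu_1,\mu_2),v_1)$, completing the proof.

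I expect no serious obstacle here; the argument is a standard application of the fact that variational structure is preserved under Lyapunov--Schmidt reduction at a solution of the auxiliary equation. The only point worth isolating is the orthogonality of $\widetilde{\Pi}$ with respect to $(\cdot,\cdot)$, which must be checked but follows directly from the definitions in Section \ref{proof of LCT}.
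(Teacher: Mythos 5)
Your argument is correct and follows essentially the same route as the paper: chain rule, Proposition \ref{Euler-Lagrange_A}, the identity $(I-\widetilde{\Pi})F(u_1+h(u_1,\mu_1,\mu_2),\mu_1,\mu_2)=0$ coming from the two reduction steps, and the fact that $\mathrm{d}_1h[u_1,\mu_1,\mu_2](v_1)$ lies in $(I-\widetilde{\Pi}){\mathcal X}$, which is orthogonal to $\widetilde{\Pi}{\mathcal Z}$ with respect to $(\cdot\,,\cdot)$. The only (immaterial) difference is that you kill the $\mathrm{d}_1h$ term before replacing $F$ by $\widetilde{\Pi}F$, whereas the paper does these two steps in the opposite order.
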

\begin{proof}
This result follows from the calculation
\begin{align*}
\mathrm{d}_1s[u_1 ,\mu_1,\mu_2](v_1 )
&=\mathrm{d}_1S[u_1 +h(u_1 ,\mu_1,\mu_2),\mu_1,\mu_2](v_1+\mathrm{d}_1h[u_1 ,\mu_1,\mu_2](v_1 ))\\
&=(F(u_1 +h(u_1 ,\mu_1,\mu_2),\mu_1,\mu_2),\mu_1,\mu_2),\mathrm{d}_1h[u_1 ,\mu_1,\mu_2](v_1 )+v_1 )\\
&=(\widetilde{\Pi}F(u_1 +h(u_1 ,\mu_1,\mu_2),\mu_1,\mu_2),\mathrm{d}_1h[u_1 ,\mu_1,\mu_2](v_1 )+v_1 )\\
&=(\widetilde{\Pi}F(u_1 +h(u_1 ,\mu_1,\mu_2),\mu_1,\mu_2),v_1 )\\
&=(f(u_1,\mu_1,\mu_2),v_1),
\end{align*}
where the second line follows from the first by Proposition \ref{Euler-Lagrange_A}, the third follows from the second because
$$(I-\widetilde{\Pi})F(u_1 +h(u_1 ,\mu_1,\mu_2),\mu_1,\mu_2)=0$$
by construction, and the fourth follows from the third because $h(u_1,\mu_1,\mu_2)$ (and hence all its derivatives) lies in $(I-\widetilde{\Pi})\tilde{X}$.
\end{proof}

Introducing coordinates
$$u_1 = q f_1 + p f_2
+Ae_1\ee^{\ii \tau}+Be_2\ee^{\ii \tau}+\bar{A}\bar{e}_1\ee^{-\ii \tau}+\bar{B}\bar{e}_2\ee^{-\ii \tau},$$
one finds that the reduced equation \eqref{reduced-eq} is given by
\begin{align}
\partial_{\bar{A}}s&=0, \label{Red 1} \\
\partial_{\bar{B}}s&=0, \label{Red 2} \\
\partial_ps&=0 \label{Red 3}
\end{align}
(recall that $s$ does not depend upon $q$ by hypothesis (H7)). The reduced action functional $s$ remains invariant under
the symmetries $T_\theta$ and $T$, whose actions on ${\mathcal W}_{1,1}$ are given by
\begin{align*}
T_\theta(A,B,\bar{A},\bar{B},q,p)&=(A\ee^{\ii \theta},B\ee^{\ii \theta},
\bar{A}\ee^{-\ii \theta},\bar{B}\ee^{-\ii \theta},q,p), \\
T(A,B,\bar{A},\bar{B},q,p)&=(\bar{A},\bar{B},A,B,-q,p).
\end{align*}
It follows that $s$ is a real-valued function of the real quantities $\abs{A}^2$, $\abs{B}^2$,
$\frac{\ii}{2}(\bar{A}B-A\bar{B})$,\linebreak
$\frac{1}{2}(A\bar{B}+\bar{A}B)$, $p$, $\mu_1$ and $\mu_2$ which is even with respect to
$\frac{\ii}{2}(\bar{A}B-A\bar{B})$. Restricting to $A=r_1$, $B=\ii r_2$, where $r_1$ and $r_2$ are real
(so that $\frac{\ii}{2}(\bar{A}B-A\bar{B})=r_1r_2$, $\frac{1}{2}(A\bar{B}+\bar{A}B)=0$), we find that
$$s(A,B,\bar{A},\bar{B},p,\mu_1,\mu_2)=\tilde{s}(r_1^2,r_2^2,r_1r_2,p,\mu_1,\mu_2),$$
where the right-hand side is even in its third argument, so that in fact, with a slight abuse of notation,
$$s(A,B,\bar{A},\bar{B},q,\mu_1,\mu_2)=\tilde{s}(r_1^2,r_2^2,p,\mu_1,\mu_2).$$
Equations \eqref{Red 1}--\eqref{Red 3} therefore reduce to
\begin{align*}
r_1\partial_1\tilde{s}(r_1^2,r_2^2,p,\mu_1,\mu_2) &=0,\\
r_2\partial_2\tilde{s}(r_1^2,r_2^2,p,\mu_1,\mu_2) &=0,\\
\partial_3 \tilde{s}(r_1^2,r_2^2,p,\mu_1,\mu_2)&=0,
\end{align*}
and further to
\begin{align}
\partial_1\tilde{s}(r_1^2,r_2^2,p,\mu_1,\mu_2) &=0, \label{Red 4} \\
\partial_2\tilde{s}(r_1^2,r_2^2,p,\mu_1,\mu_2)&=0, \label{Red 5} \\
\partial_3\tilde{s}(r_1^2,r_2^2,p,\mu_1,\mu_2) &=0 \label{Red 6}
\end{align}
for solutions with non-zero $r_1$ and $r_2$ components.

\begin{lemma} \label{For final IFT 1}
The quadratic parts of $\tilde{s}$ which are respectively independent of $(\mu_1,\mu_2)$,
independent of $\mu_2$ and linear in $\mu_1$, and independent of $\mu_1$ and linear in $\mu_2$ are
given by
\begin{align*}
\tilde{s}_2^{00} & =\tilde{s}_{002}^{00}p^2, \\
\tilde{s}_2^{10} & = \tilde{s}_{200}^{10} \mu_1 r_1^2 +\tilde{s}_{020}^{10} \mu_1 r_2^2+\tilde{s}_{002}^{10}\mu_1 p^2, \\
\tilde{s}_2^{01} & = -s_1 \mu_2 r_1^2 -s_2 \mu_2 r_2^2,
\end{align*}
where
\begin{align*}
\tilde{s}_{002}^{00}&= -\tfrac{1}{2},\\
\tilde{s}_{200}^{10}&=\Omega_0^0(\partial_{\mu_1} L^0 e_1, \bar{e}_1), \\
\tilde{s}_{020}^{10}&= \Omega_0^0(\partial_{\mu_1} L^0 e_2,\bar{e}_2), \\
\tilde{s}_{002}^{10}&= \tfrac{1}{2}\Omega_0^0(\partial_{\mu_1} L^0f_2,f_2).
\end{align*}
\end{lemma}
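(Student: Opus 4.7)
The plan is to extract the quadratic-in-$u_1$ part of
$$s(u_1, \mu_1, \mu_2) = S(u_1 + h(u_1, \mu_1, \mu_2), \mu_1, \mu_2)$$
by analyzing the transverse correction $h$ and then explicitly computing the quadratic form $S_2$ on $\mathcal{W}_{1,1}$. As a preliminary, I would observe that $F(0, \mu_1, \mu_2) = -\nabla H^{\mu_1}(0) = 0$ by (H1), so $u_2 = u_{\mathcal{W}_2} = 0$ at $u_1 = 0$ by uniqueness in the Lyapunov-Schmidt lemmas; hence $h(0, \mu_1, \mu_2) = 0$ for all $(\mu_1, \mu_2)$. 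Together with $\mathrm{d}_1 h[0, 0, 0] = 0$ this forces $h(u_1, \mu) = \mathrm{d}_1 h[0, \mu] u_1 + O(\|u_1\|^2)$ with $\mathrm{d}_1 h[0, \mu] = O(\mu)$, so the quadratic-in-$u_1$ part of $s$ reads
$$S_2(u_1, \mu) + \mathrm{d}_1^2 S[0, \mu](u_1, \mathrm{d}_1 h[0, \mu] u_1) + O(\mu^2 \|u_1\|^2) + O(\|u_1\|^3),$$
with the $h$-correction entering at order $O(\mu \|u_1\|^2)$.

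I would then derive
$$S_2(u, \mu_1, \mu_2) = -\frac{\kappa + \mu_2}{4\pi}\int_0^{2\pi} \langle J^{\mu_1}(0) u, u_\tau\rangle\,\mathrm{d}\tau - \frac{1}{2\pi}\int_0^{2\pi} H_2^{\mu_1}(u)\,\mathrm{d}\tau$$
by Taylor expanding $\alpha^{\mu_1}(u)$, integrating by parts in $\tau$, and using $J^{\mu_1}(0) = \widetilde{\mathrm{d}\alpha}^{\mu_1}[0] - \widetilde{\mathrm{d}\alpha}^{\mu_1}[0]^\ast$; the constant term $\alpha^{\mu_1}(0)$ drops out by periodicity. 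Substituting the Fourier expansion of $u_1$ and invoking Parseval, only the mode-$0$ parts of the integrands contribute. The eigenvalue relations $L e_i = \ii\kappa e_i$, $L f_1 = 0$, $L f_2 = f_1$, the identity $H_2^{\mu_1}(u) = \tfrac12\langle J^{\mu_1}(0) L^{\mu_1} u, u\rangle$, and the normalizations $\Omega^0|_0(e_i, \bar e_j) = \pm\ii\delta_{ij}$, $\Omega^0|_0(f_1, f_2) = 1$, reduce the integrals to polynomials in $A, B, \bar A, \bar B, p$.

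At $\mu_1 = \mu_2 = 0$ the $|A|^2$ and $|B|^2$ contributions cancel between the symplectic and Hamiltonian terms (by the resonance $\kappa_1^0 = \kappa_2^0 = \kappa$), leaving only $-p^2/2$ and hence $\tilde s_{002}^{00} = -\tfrac12$. Since $\mathrm d_1 F[0, 0, 0]$ annihilates $Ae_1 e^{\ii\tau}, Be_2 e^{\ii\tau}$ and their conjugates, the bilinear form $\mathrm d_1^2 S[0, 0, 0]$ vanishes whenever one argument is one of these modes, so the $h$-correction only affects the $p^2$-coefficients at linear order in $\mu$. For $\tilde s_2^{01}$: $\partial_{\mu_2}|_0 S_2$ is purely symplectic and evaluates to $-s_1 r_1^2 - s_2 r_2^2$ (no $p^2$-term since $(pf_2)_\tau = 0$), while the $\mu_2$-derivative of the source in the reduction equation vanishes on $pf_2$ (again by $(pf_2)_\tau = 0$), so $\partial_{\mu_2}\mathrm{d}_1 h[0, 0, 0] f_2 = 0$ and no $p^2$ correction arises. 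For $\tilde s_2^{10}$: differentiating in $\mu_1$ uses $\partial_{\mu_1}|_0 (J^{\mu_1}(0) L^{\mu_1}) = \dot J L + J^0(0) \dot L$, and the $\dot J$-contributions cancel between the symplectic and Hamiltonian pieces on the $e_i$ modes (via $L e_i = \ii\kappa e_i$), giving $\Omega^0|_0(\dot L e_i, \bar e_i)$ as the $r_i^2$ coefficients.

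The delicate step and the main obstacle is the $p^2$-coefficient of $\tilde s_2^{10}$: direct substitution into $\partial_{\mu_1}|_0 S_2$ produces an extra term involving $\langle \dot J f_1, f_2\rangle$, which does not appear in the claimed formula and must be cancelled by the $h$-correction. I would compute $\partial_{\mu_1}\mathrm{d}_1 h[0, 0, 0] f_2$ by implicit differentiation of the reduction equation at $\mu_1 = 0$, with source $-p\dot J f_1 - p J^0(0) \dot L f_2$, and use $\mathrm{d}_1 F[0, 0, 0] (pf_2) = -p J^0(0) f_1$ to express the cross term $\mathrm{d}_1^2 S[0, 0, 0](pf_2, \partial_{\mu_1}\mathrm{d}_1 h[0, 0, 0] (pf_2)) = -p\langle J^0(0) f_1, [\partial_{\mu_1}\mathrm{d}_1 h[0, 0, 0] (pf_2)]_0\rangle$, whereupon combining with the direct contribution yields the stated formula $\tfrac12 \Omega^0|_0(\dot L f_2, f_2)$.
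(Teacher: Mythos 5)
Your proposal follows essentially the same route as the paper's proof: extract the quadratic part of $s$ by computing $\mathrm{d}_1^2S[0,0,0]$ and its mixed derivatives with respect to $\mu_1$ and $\mu_2$, use $Le_i=\ii\kappa e_i$, $Lf_1=0$, $Lf_2=f_1$ and the normalisations to evaluate the resulting pairings, and observe that the corrections coming from the reduction function $h$ drop out of the $r_1^2$ and $r_2^2$ coefficients because $\mathrm{d}_1^2S[0,0,0]$ annihilates the modes $e_i\ee^{\pm\ii\tau}$ (and out of the $\mu_2 p^2$ coefficient because the $\mu_2$-source vanishes on constants in $\tau$). All of this is correct, and in particular every coefficient that is actually used downstream in Lemma \ref{main_result_lemma} ($\tilde s_{002}^{00}$, $\tilde s_{200}^{10}$, $\tilde s_{020}^{10}$, $\tilde s_{200}^{01}$, $\tilde s_{020}^{01}$) is obtained exactly as in the paper.

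The one step where you go beyond the paper is also the one step you do not actually close: the coefficient $\tilde s_{002}^{10}$. You rightly isolate the extra term $-\tfrac12\langle\partial_{\mu_1}J^{0}(0)f_1,f_2\rangle$ produced by $\tfrac12\mathrm{d}_1^2\mathrm{d}_2S[0,0,0](f_2,f_2,1)$, but the assertion that it is cancelled by the cross term with $\partial_{\mu_1}\mathrm{d}_1h[0,0,0]f_2$ is stated, not computed, and the mechanism you describe does not obviously work. Writing $w$ for the coefficient of $p\mu_1$ in $h$, the cross term is $-\langle J^0(0)f_1,w\rangle=-\langle J^0(0)Lf_2,w\rangle=-\langle J^0(0)Lw,f_2\rangle$, using the symmetry of $J^0(0)L=\nabla H_2^0$; since the reduction equation prescribes $J^0(0)Lw$ only as $(I-\Pi_0)$ applied to the source and $f_2\in\mathrm{ran}\,\Pi_0$, this pairing appears to vanish identically rather than to return $+\tfrac12\langle\partial_{\mu_1}J^{0}(0)f_1,f_2\rangle$. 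So either $\langle\partial_{\mu_1}J^{0}(0)f_1,f_2\rangle=0$ for a separate reason (it does vanish in the hydroelastic application, but no hypothesis among (H1)--(H8) visibly forces it) or the stated value of $\tilde s_{002}^{10}$ carries this additional term. In fairness, the paper's own proof is equally silent at exactly this point, and $\tilde s_{002}^{10}$ never enters the solution of \eqref{Red 4}--\eqref{Red 6}, so nothing in Theorem \ref{LCT} is at stake; but as a proof of the lemma as stated, this cancellation must be established rather than asserted.
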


\begin{proof} We begin by recording the formulae
\begin{align*}
\mathrm{d}_1^2S[0,0,0](v_1,v_2)&=(J^0(0)(\kappa  v_{1\tau}- Lv_1),v_2), \\
\mathrm{d}_1^2\mathrm{d}_2S[0,0,0](v_1,v_2,1)&=(\partial_{\mu_1}J^0(0)(\kappa  v_{1\tau}- Lv_1)+J^0(0)\partial_{\mu_1} L^0 v_1,v_2),\\
\mathrm{d}_1^2\mathrm{d}_3S[0,0,0](v_1,v_2,1)&=(J^0(0)v_{1\tau},v_2)
\end{align*}
for $v_1$, $v_2\in{\mathcal X}$, which are obtained by differentiating the identity
$$
\mathrm{d}_1S[u,\mu_1,\mu_2](v)=\big(J^{\mu_1}(u)\big((\kappa +\mu_2)u_\tau-L^{\mu_1}u-N^{\mu_1}(u)\big),v\big)
$$
for $(u,\mu_1,\mu_2)\in {\mathcal U}\times\Lambda_1\times\Lambda_2$ and $v\in {\mathcal X}$
(see Proposition \ref{Euler-Lagrange_A}).

These formulae show that
$$
\tilde{s}_2^{00} = \tilde{s}_{200}^{00}r_1^2 + \tilde{s}_{020}^{00}r_2^2 + \tilde{s}_{002}^{00}p^2,
$$
where
\begin{align*}
\tilde{s}_{200}^{00}&=\mathrm{d}_1^2 S[0,0,0](\ee^{\ii \tau}e_1,\ee^{-\ii \tau}\bar{e}_1)=(J^0(0)(\ii\kappa  I - L)\ee^{\ii \tau}e_1, \ee^{\ii \tau}e_1)=0, \\[0.5mm]
\tilde{s}_{020}^{00}&=\mathrm{d}_1^2 S[0,0,0](\ee^{\ii \tau}e_2,\ee^{-\ii \tau}\bar{e}_2)=(J^0(0)(\ii\kappa  I - L)\ee^{\ii \tau}e_2, \ee^{\ii \tau}e_2)=0, \\[0.5mm]
\tilde{s}_{002}^{00}&=\tfrac{1}{2}\mathrm{d}_1^2 S[0,0,0](f_2,f_2)=-\tfrac{1}{2}(J^0(0)Lf_2,f_2)=-\tfrac{1}{2}\Omega_0^0(f_1,f_2)=-\tfrac{1}{2}.
\end{align*}
Similarly,  denoting the part of $h(u_1,\mu_1,\mu_2)$
which is homogeneous of degree $i$, $j$, $k$, $\ell$, $n_1$ and $n_2$ in respectively
$A$, $B$, $\bar{A}$, $\bar{B}$, $\mu_1$ and $\mu_2$ by $h_{ijk\ell}^{n_1n_2}A^iB^j\bar{A}^k\bar{B}^\ell\mu_1^{n_1}\mu_2^{n_2}$,
we find that
\begin{align*}
\tilde{s}_2^{10} &= \tilde{s}_{200}^{10}\mu_1r_1^2 + \tilde{s}_{020}^{10}\mu_1r_2^2 + \tilde{s}_{002}^{10}\mu_1p^2, \\
\tilde{s}_2^{01} &= \tilde{s}_{200}^{01}\mu_2r_1^2 + \tilde{s}_{020}^{01}\mu_2r_2^2 + \tilde{s}_{002}^{01}\mu_2p^2,
\end{align*}
where
\begin{align*}
\tilde{s}_{200}^{10}&=\mathrm{d}_1^2\mathrm{d}_2S[0,0,0](\ee^{\ii \tau}e_1,\ee^{-\ii\tau}\bar{e}_1,1)+\mathrm{d}_1^2 S[0,0,0](\ee^{\ii \tau}e_1,h_{00100}^{10})
+\mathrm{d}_1^2 S[0,0,0](\ee^{-\ii \tau}\bar{e}_1,h_{10000}^{10}),\\
& = (J^0(0)\partial_{\mu_1} L^0 (\ee^{\ii \tau}e_1),\ee^{\ii \tau}e_1) \\
& = \Omega_0^0(\partial_{\mu_1} L^0 (e_1),\bar{e}_1), \\[0.5mm]
\tilde{s}_{020}^{10}&=\mathrm{d}_1^2\mathrm{d}_2S[0,0,0](\ee^{\ii \tau}e_2,\ee^{-\ii\tau}\bar{e}_2,1)+\mathrm{d}_1^2 S[0,0,0](\ee^{\ii \tau}e_2,h_{00010}^{10})
+\mathrm{d}_1^2 S[0,0,0](\ee^{-\ii \tau}\bar{e}_1,h_{01000}^{10}) \\
& = (J^0(0)\partial_{\mu_1} L^0 (\ee^{\ii \tau}e_2),\ee^{\ii \tau}e_2) \\
& = \Omega_0^0(\partial_{\mu_1} L^0 (e_2),\bar{e}_2),\displaybreak \\[0.5mm]
\tilde{s}_{002}^{10}&=\tfrac{1}{2}\mathrm{d}_1^2\mathrm{d}_2S[0,0,0](f_2,f_2,1)+\mathrm{d}_1^2 S[0,0,0](f_2,h^{10}_{00001}) \\
& = \tfrac{1}{2}(J^0(0)\partial_{\mu_1} L^0f_2,f_2) \\
& = \tfrac{1}{2}\Omega_0^0(\partial_{\mu_1} L^0f_2,f_2)
\end{align*}
and
\begin{align*}
\tilde{s}_{200}^{01}&=\mathrm{d}_1^2\mathrm{d}_3S[0,0,0](\ee^{\ii \tau}e_1,\ee^{-\ii\tau}\bar{e}_1,1)+\mathrm{d}_1^2 S[0,0,0](\ee^{\ii \tau}e_1,h_{00100}^{01})
+\mathrm{d}_1^2 S[0,0,0](\ee^{-\ii \tau}\bar{e}_1,h_{10000}^{01}) \\
& = \ii (J^0(0)\ee^{\ii \tau}e_1,\ee^{\ii \tau}e_1) \\
& = \ii \Omega_0^0(e_1,\bar{e}_1) \\
& = - s_1, \\[0.5mm]
\tilde{s}_{020}^{01}&=\mathrm{d}_1^2\mathrm{d}_3S[0,0,0](\ee^{\ii \tau}e_2,\ee^{-\ii\tau}\bar{e}_2,1)+\mathrm{d}_1^2 S[0,0,0](\ee^{\ii \tau}e_2,h_{00100}^{01})
+\mathrm{d}_1^2 S[0,0,0](\ee^{-\ii \tau}\bar{e}_2,h_{01000}^{01}) \\
& = \ii (J^0(0)\ee^{\ii \tau}e_2,\ee^{\ii \tau}e_2) \\
& = \ii \Omega_0^0(e_2,\bar{e}_2) \\
& = - s_2, \\[0.5mm]
\tilde{s}_{002}^{01}&=\tfrac{1}{2}\mathrm{d}_1^2\mathrm{d}_3S[0,0,0](f_2,f_2,1)+\mathrm{d}_1^2 S[0,0,0](f_2,h^{10}_{00001})\\
& = 0.
\end{align*}
Note that the second derivatives are extended \emph{bilinearly} to the complexification of ${\mathcal Z}$ while $(\cdot\,,\cdot)$
is extended \emph{sesquilinearly}.
\end{proof}

\begin{corollary} \label{For final IFT 2}
One has the formulae
$$\tilde{s}_{200}^{10} = \partial_{\mu_1} \kappa_1^0, \qquad \tilde{s}_{200}^{01} = \partial_{\mu_1} \kappa_2^0.$$
\end{corollary}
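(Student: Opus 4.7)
The plan is to apply a standard eigenvalue-perturbation formula, with the symplectic form $\Omega^0|_0$ playing the role of the dual pairing. Differentiating the identity $L^{\mu_1} e_i^{\mu_1} = \ii \kappa_i^{\mu_1} e_i^{\mu_1}$ with respect to $\mu_1$ at $\mu_1 = 0$ gives
\begin{equation*}
(\partial_{\mu_1} L^0)\, e_i + L\,(\partial_{\mu_1} e_i^0) = \ii (\partial_{\mu_1} \kappa_i^0)\, e_i + \ii \kappa\, (\partial_{\mu_1} e_i^0),
\end{equation*}
and the idea is to pair this with $\bar{e}_i$ using $\Omega^0|_0$ so that the unknown derivative $\partial_{\mu_1} e_i^0$ drops out.

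The key algebraic point is the skew-adjointness relation $\Omega^0|_0(Lv, w) = -\Omega^0|_0(v, Lw)$, which holds because $L$ is the Hamiltonian vector field of the quadratic system $(Z, \Omega^0|_0, H_2^0)$ (so that $\langle J^0(0) L v, w \rangle = \mathrm{d}^2 H_2^0[0](v,w)$ is symmetric in $v$ and $w$). Combining this with $L \bar{e}_i = -\ii \kappa \bar{e}_i$, the term $\Omega^0|_0(L \partial_{\mu_1} e_i^0, \bar{e}_i)$ reduces to $\ii\kappa\, \Omega^0|_0(\partial_{\mu_1} e_i^0, \bar{e}_i)$, which cancels precisely the corresponding term on the right-hand side. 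What survives is
\begin{equation*}
\Omega^0|_0\bigl((\partial_{\mu_1} L^0)\, e_i, \bar{e}_i\bigr) = \ii (\partial_{\mu_1} \kappa_i^0)\, \Omega^0|_0(e_i, \bar{e}_i).
\end{equation*}

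Finally, the normalisation $\Omega^0|_0(e_i, \bar{e}_i) = \pm \ii$ evaluates the right-hand side to $\pm \partial_{\mu_1} \kappa_i^0$, and comparison with the expressions for $\tilde{s}_{200}^{10}$ and $\tilde{s}_{020}^{10}$ obtained in Lemma \ref{For final IFT 1} delivers the claimed identities. There is no substantial obstacle here; the proof is essentially one line of perturbation calculus, and the only care required lies in the bookkeeping of signs — matching the $\pm$ choices in the eigenvector normalisation (which differ between the $1{:}1$ and $1{:}{-1}$ cases) and consistently employing the \emph{bilinear} extension of $\Omega^0|_0$ rather than the sesquilinear extension of $\langle\cdot\,,\cdot\rangle$ when pairing against the complex-conjugate eigenvector.
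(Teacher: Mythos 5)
Your proof is correct and takes essentially the same route as the paper: a first-order eigenvalue perturbation computation in which the $\Omega^0|_0$-skew-adjointness of $L$ (from the symmetry of $\mathrm{d}^2H_2^0[0]$) together with $L\bar{e}_i=-\ii\kappa\bar{e}_i$ kills the unknown term $\partial_{\mu_1}e_i^0$, leaving $\Omega^0|_0(\partial_{\mu_1}L^0e_i,\bar{e}_i)=\ii(\partial_{\mu_1}\kappa_i^0)\,\Omega^0|_0(e_i,\bar{e}_i)$. The only cosmetic difference is that the paper differentiates the already-paired scalar identity $\Omega_0^{\mu_1}(L^{\mu_1}e_i^{\mu_1},\bar{e}_i^{\mu_1})=\ii\kappa_i^{\mu_1}\Omega_0^{\mu_1}(e_i^{\mu_1},\bar{e}_i^{\mu_1})$ and must then recombine the extra terms coming from the $\mu_1$-dependence of $\Omega^{\mu_1}$ and $\bar{e}_i^{\mu_1}$ into the vanishing derivative of the constant normalisation, whereas you differentiate the eigenvector equation first and pair with the fixed $\bar{e}_i$ and fixed form, so those terms never arise; both computations yield the same result up to the factor $-s_i$, a sign-convention issue in the statement itself which you correctly flag.
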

\begin{proof}
Observe that
$$\Omega_0^{\mu_1}(L^{\mu_1} e_i^{\mu_1},\bar{e}_i^{\mu_1}) = \ii \kappa_i^{\mu_1} \Omega_0^{\mu_1}(e_i^{\mu_1},\bar{e}_i^{\mu_1}) = - s_i \kappa_i^{\mu_1}.$$
Differentiating this formula with respect to $\mu_1$ and evaluating the result at $\mu_1=0$ yields
\begin{align*}
-s_i \partial_{\mu_1} \kappa_i^0\big|_{\mu_1=0} & = \Omega_0^0(\partial_{\mu_1} L^0 e_i,\bar{e}_i)+
\Omega_0^1(L e_i, \bar{e}_i) + \Omega_0^0(L \partial_{\mu_1} e_i^{\mu_1},\bar{e}_i) + \Omega_0^0( Le_i, \partial_{\mu_1}\bar{e}_i^{\mu_1})\Big|_{\mu_1=0} \\
& = \Omega_0^0(\partial_{\mu_1} L^0e_i, \bar{e}_i) + \Omega_0^1(L e_i, \bar{e}_i) 
- \Omega_0^0(\partial_{\mu_1} e_i^{\mu_1},L\bar{e}_i) + \Omega_0^0( Le_i, \partial_{\mu_1}\bar{e}_i^{\mu_1})\Big|_{\mu_1=0} \\
& = \Omega_0^0(\partial_{\mu_1} L^0 e_i, \bar{e}_i) +\ii \kappa\big(\Omega_0^1(e_i, \bar{e}_i) 
+ \Omega_0^0(\partial_{\mu_1} e_i^{\mu_1},\bar{e}_i) + \Omega_0^0( e_i, \partial_{\mu_1}\bar{e}_i^{\mu_1})\big)\Big|_{\mu_1=0} \\ 
& = \Omega_0^0 (\partial_{\mu_1} L^0e_i,\bar{e}_i)+\ii\kappa \partial_{\mu_1} \underbrace{\Omega_0^{\mu_1} (e_i^{\mu_1},\partial_{\mu_1}\bar{e}_i^{\mu_1})}_{\displaystyle = s_i} \Big|_{\mu_1=0}  \\
& = \Omega_0^0 (\partial_{\mu_1} L^0e_i,\bar{e}_i).\qedhere
\end{align*}
\end{proof}

Finally, we solve equations \eqref{Red 4}--\eqref{Red 6} using the information given by Lemma \ref{For final IFT 1} and Corollary \ref{For final IFT 2},
thus completing the proof of Theorem \ref{LCT}.

\begin{lemma}\label{main_result_lemma}
There exist $\varepsilon>0$ and functions $p^\star: B_\varepsilon(0) \rightarrow {\mathbb R}$,
$\mu_1^\star: B_\varepsilon(0) \rightarrow {\mathbb R}$,
$\mu_2^\star: B_\varepsilon(0) \rightarrow {\mathbb R}$
such that the solution set of \eqref{Red 4}--\eqref{Red 6} in ${\mathcal U}_{1,1} \times \Lambda_1 \times \Lambda_2$
coincides with
$$\{(r_1^2,r_2^2,p^\star(r_1^2,r_2^2),\mu_1^\star(r_1^2,r_2^2),\mu_2^\star(r_1^2,r_2^2): |(r_1^2,r_2^2)| < \varepsilon\}.$$
\end{lemma}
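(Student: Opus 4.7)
The plan is to apply the implicit function theorem to \eqref{Red 4}--\eqref{Red 6}, treating $(r_1^2, r_2^2)$ as small parameters and solving for the three unknowns $(p,\mu_1,\mu_2)$ near the origin. The base point $(r_1^2,r_2^2,p,\mu_1,\mu_2)=0$ is automatically a solution: because $N^{\mu_1}(0)=0$ and $\mathrm d N^{\mu_1}[0]=0$, the reduced action $\tilde s$ has no linear part in its non-parameter arguments, so $\partial_1\tilde s$, $\partial_2\tilde s$, $\partial_3\tilde s$ all vanish there.

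The essential step is to verify nondegeneracy of the Jacobian of $(\partial_1\tilde s,\partial_2\tilde s,\partial_3\tilde s)$ with respect to $(p,\mu_1,\mu_2)$ at this base point. Reading off the relevant second-order mixed derivatives directly from the quadratic expansions supplied by Lemma \ref{For final IFT 1} (with $\tilde s_{002}^{00}=-\tfrac12$ supplying the $p$-entry), the Jacobian equals
\[
\begin{pmatrix}
0 & \tilde s_{200}^{10} & -s_1 \\
0 & \tilde s_{020}^{10} & -s_2 \\
-1 & 0 & 0
\end{pmatrix},
\]
whose determinant is $s_2\tilde s_{200}^{10}-s_1\tilde s_{020}^{10}$. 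Substituting the identity $\Omega_0^0(\partial_{\mu_1}L^0 e_i,\bar e_i)=-s_i\partial_{\mu_1}\kappa_i^0$ established inside the proof of Corollary \ref{For final IFT 2} collapses this determinant to
\[
-s_1 s_2\,\frac{\mathrm d}{\mathrm d\mu_1}(\kappa_1^{\mu_1}-\kappa_2^{\mu_1})\Big|_{\mu_1=0},
\]
which is nonzero because $s_1 s_2\in\{\pm 1\}$ and the colliding eigenvalues separate at nonzero speed by hypothesis (H3).

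With the Jacobian invertible, the implicit function theorem supplies $\varepsilon>0$ and smooth functions $p^\star,\mu_1^\star,\mu_2^\star$ on $B_\varepsilon(0)\subset\R^2$ which uniquely parametrise the solution set of \eqref{Red 4}--\eqref{Red 6} in ${\mathcal U}_{1,1}\times\Lambda_1\times\Lambda_2$. The main obstacle is the sign bookkeeping inside the determinant: one must combine Lemma \ref{For final IFT 1} and Corollary \ref{For final IFT 2} while carefully tracking the symplectic signs $s_1,s_2$ --- whose product distinguishes the $1:1$ from the $1:-1$ resonance but is unimodular in either case --- so that the nondegeneracy collapses cleanly onto the eigenvalue-splitting transversality built into hypothesis (H3).
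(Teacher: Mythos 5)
Your proof is correct and takes essentially the same route as the paper: the paper runs the implicit function theorem in two nested stages (first solving \eqref{Red 4}, \eqref{Red 5} for $(\mu_1,\mu_2)$ using the nonvanishing of the $2\times 2$ determinant $-s_2\tilde{s}_{200}^{10}+s_1\tilde{s}_{020}^{10}=\pm s_1s_2\,\partial_{\mu_1}(\kappa_1^{\mu_1}-\kappa_2^{\mu_1})\big|_{\mu_1=0}$, then solving \eqref{Red 6} for $p$ using $\partial_3^2\tilde{s}(0,0,0,0,0)=2\tilde{s}_{002}^{00}=-1$), and your single $3\times 3$ Jacobian is block-triangular with precisely these two factors, so the transversality input is identical. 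One small correction: the vanishing of $\partial_1\tilde{s}$ and $\partial_2\tilde{s}$ at the origin is not a consequence of $N^{\mu_1}(0)=0$ and $\mathrm{d}N^{\mu_1}[0]=0$, but of the computation $\tilde{s}_{200}^{00}=\tilde{s}_{020}^{00}=0$ in Lemma \ref{For final IFT 1}, which rests on $(\ii\kappa I-L)e_i=0$ --- that is, on the correct choice of the basic frequency $\kappa$ --- rather than on properties of the nonlinearity.
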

\begin{proof}
Since
$$\begin{pmatrix}\partial_1 \tilde{s}(0,0,0,0,0) \\ \partial_2 \tilde{s}(0,0,0,0,0) \end{pmatrix}=\begin{pmatrix} 0 \\ 0 \end{pmatrix}$$
and
\begin{align*}
\det\begin{pmatrix}
\partial_1\partial_4 \tilde{s}(0,0,0,0,0) & \partial_1 \partial_5 \tilde{s}(0,0,0,0,0)\\
\partial_2 \partial_4 \tilde{s}(0,0,0,0,0) & \partial_2\partial_5\tilde{s}(0,0,0,0,0)
\end{pmatrix}
&=-s_2\tilde{s}_{200}^{10}+s_1\tilde{s}_{020}^{01} \\
&= -s_1s_2\partial_{\mu_1}(\kappa_1^0-\kappa_2^0)\\
& \neq 0,
\end{align*}
we can solve equations \eqref{Red 4}, \eqref{Red 5} locally for $\mu_1=\mu_1(r_1^2,r_2^2,p)$,
$\mu_2=\mu_2(r_1^2,r_2^2,p)$ using the implicit-function theorem. Inserting this solution into
\eqref{Red 6} yields
\begin{equation}
\tilde{t}(r_1^2,r_2^2,p)=0, \label{Red 7}
\end{equation}
where
$$
\tilde{t}(r_1^2,r_2^2,p)=\partial_3\tilde{s}(r_1^2,r_2^2,p,\mu_1(r_1^2,r_2^2,p),\mu_2(r_1^2,r_2^2,p)).
$$
Furthermore
$$\tilde{t}(0,0,0)=\partial_3\tilde{s}(0,0,0,\mu_1(0,0,0),\mu_2(0,0,0))=0$$
and similarly
\begin{align*}
\partial_3 \tilde{t}(0,0,0) &= 
\partial_3^2\tilde{s}(0,0,0,0,0)+\partial_3\mu_1(0,0,0)\partial_3\partial_4\tilde{s}(0,0,0,0,0)
+\partial_3\mu_2(0,0,0)\partial_3\partial_5\tilde{s}(0,0,0,0,0) \\
& = 2\tilde{s}_{002}^{00} \\
& = -1.
\end{align*}
We can therefore solve equation \eqref{Red 6} locally for $p=p(r_1^2,r_2^2)$ using the implicit-function theorem.

The assertion follows by setting $p^\star(r_1^2,r_2^2)=p(r_1^2,r_2^2)$,
$\mu_1^\star(r_1^2,r_2^2)=\mu_1(r_1^2,r_2^2,p(r_1^2,r_2^2))$ and
$\mu_2^\star(r_1^2,r_2^2)=\mu_2(r_1^2,r_2^2,p(r_1^2,r_2^2))$.
\end{proof}

\section{Hydroelastic waves} \label{Formulation}

In this section we introduce the hydrodynamic problem for travelling waves on the surface of a three-dimensional body of water beneath a thin ice sheet modelled using the Cosserat theory of
hyperelastic shells (Plotnikov \& Toland \cite{PlotnikovToland11}). The fluid is bounded below by a rigid horizontal bottom $\{{x_2}=-h\}$ (the cases $h<\infty$
and $h=\infty$ are referred to as `finite depth' and `infinite depth') and
above by a free surface $\{{x_2}=\eta({x_1},{x_3})\}$ (in a frame of reference following the wave with constant speed $c$ in the ${x_1}$ direction); there is no cavitation between this surface
and the ice sheet. Working in a dimensionless coordinates with unit length $(D/\rho g)^{1/4}$ and unit speed $(\rho/ Dg^3)^{1/8}$, one finds that the hydrodynamic problem
is to find an Eulerian velocity potential $\phi$ which satisfies the equations
\pagebreak
\begin{alignat}{2}
&\phi_{{x_1}{x_1}}+\phi_{{x_2}{x_2}}+\phi_{{x_3}{x_3}}=0,&& -\tfrac{1}{\beta}<{x_2}<\eta({x_1},{x_3}), \label{gov 1}\\
&\phi_{x_2}=0, && {x_2}=-\tfrac{1}{\beta}, \label{gov 2}\\
&\phi_{x_2}+\gamma \eta_{x_1}-\phi_{x_1}\eta_{x_1}-\phi_{x_3}\eta_{x_3}=0,&& {x_2}=\eta({x_1},{x_3}), \label{gov 3}\\
&-\gamma\phi_{x_1}+\tfrac{1}{2}(\phi_{x_1}^2+\phi_{x_2}^2+\phi_{x_3}^2)+\eta+U(\eta)=0,\qquad &&{x_2}=\eta({x_1},{x_3}), \label{gov 4}
\end{alignat}
where
\begin{align*}
U(\eta)&=2\bigg(\!\frac{1}{\sqrt{Q(\eta)}}\bigg[\partial_{x_1}\bigg(\frac{1+\eta_{x_3}^2}{\sqrt{Q(\eta)}}P(\eta)_{x_1}\bigg)-\partial_{x_1}\bigg(\frac{\eta_{x_1}\eta_{x_3}}{\sqrt{Q(\eta)}}P(\eta)_{x_3}\bigg)\\
& \hspace{1.05in}
\mbox{}-\partial_{x_3}\bigg(\frac{\eta_{x_1}\eta_{x_3}}{\sqrt{Q(\eta)}}P(\eta)_{x_1}\bigg)+\partial_{x_3}\bigg(\frac{1+\eta_{x_1}^2}{\sqrt{Q(\eta)}}P(\eta)_{x_3}\bigg)\bigg]+2P(\eta)^3-2K(\eta)P(\eta)\!\bigg),\\
Q(\eta)&=1+\eta_{x_1}^2+\eta_{x_3}^2,\\
P(\eta)&=\frac{1}{2Q(\eta)^{3/2}}\bigg[(1+\eta_{x_3}^2)\eta_{{x_1}{x_1}}-2\eta_{{x_1}{x_3}}\eta_{x_1}\eta_{x_3}+(1+\eta_{x_1}^2)\eta_{{x_3}{x_3}}\bigg],\\
K(\eta)&=\frac{1}{Q(\eta)^2}(\eta_{{x_1}{x_1}}\eta_{{x_3}{x_3}}-\eta_{{x_1}{x_3}}^2),
\end{align*}
and
$$\beta=\left(\frac{D}{\rho g h^4}\right)^{\!\!1/4} \geq 0, \qquad \gamma=\left(\frac{c^8 \rho}{D g^3}\right)^{\!\!1/8}>0,$$
where $D$, $\rho$ and $g$
are respectively the coefficient of flexural rigidity for the ice sheet, the density of the fluid and the acceleration due to gravity
(see Guyenne and Parau \cite{GuyenneParau12}).

We consider waves which are periodic with periods $p_1$ and $p_2$ in two arbitrary horizontal directions $x$ and $z$
which form (different) angles $\theta_1$, $\theta_2 \in [0,\pi)$ with the ${x_1}$-axis respectively, so that
\[
x=\csc(\theta_2-\theta_1)({x_1}\sin\theta_2-{x_3}\cos\theta_2), \qquad
z=\csc(\theta_1-\theta_2)({x_1}\sin\theta_1-{x_3}\cos\theta_1)
\]
(see Figure \ref{intro - periodic domain}). 
To this end we seek solutions of the governing equations of the form
$$
\eta({x_1},{x_3})=\tilde{\eta}(\tilde{x},\tilde{z}),\qquad
\phi({x_1},{x_2},{x_3})=\tilde{\phi}(\tilde{x},{x_2},\tilde{z}),
$$
where
\begin{equation*}
\tilde{x} = {x_1}\sin\theta_2-{x_3}\cos\theta_2, \qquad
\tilde{z} = \frac{2\pi}{p_2}({x_1}\sin\theta_1-{x_3}\cos\theta_1)
\end{equation*}
and $\tilde{\eta}$, $\tilde{\phi}$ are $2\pi$-periodic in $\tilde{z}$ (the requirement that they are
also periodic in $\tilde{x}$ is applied later). 
The governing equations become
\pagebreak
\begin{alignat}{2}
&\phi_{xx}+\phi_{{x_2}{x_2}}+\nu^2\phi_{zz}+2\nu\cos(\theta_1-\theta_2)\phi_{xz}=0, &&-\tfrac{1}{\beta}<{x_2}<\eta, \label{basic 1}\\
&\phi_{x_2}=0, &&{x_2}=-\tfrac{1}{\beta}, \label{basic 2} \\
&\phi_{{x_2}}=-\gamma(\sin \theta_2\eta_x+\nu\sin \theta_1\eta_z)+\eta_x\phi_x+\nu^2\eta_z\phi_z \nonumber\\
& \qquad\qquad\mbox{}+\nu\cos(\theta_1-\theta_2)(\eta_x\phi_z+\eta_z\phi_x),\  &&{x_2}=\eta, \label{basic 3} \\
&-\gamma(\sin \theta_2\phi_x+\nu\sin \theta_1\phi_z) \nonumber \\
& \qquad\mbox{}+\tfrac{1}{2}(\phi_x^2+\phi_{x_2}^2+\nu^2\phi_z^2+2\nu\cos(\theta_1-\theta_2)\phi_x\phi_z)+\gamma\eta+U(\eta)=0,\quad\  &&{x_2}=\eta, \label{basic 4}
\end{alignat}
where
\begin{align*}
U(\eta)&=2\bigg(\frac{1}{\sqrt{Q(\eta)}}\bigg[\partial_x\bigg(\frac{1+\nu^2\sin^2(\theta_1-\theta_2)\eta_z^2}{\sqrt{Q(\eta)}}P(\eta)_x\bigg)+\nu^2\partial_z\bigg(\frac{1+\sin^2(\theta_1-\theta_2)\eta_x^2}{\sqrt{Q(\eta)}}P(\eta)_z\bigg)\\
& \hspace{1.05in}
\mbox{}+\nu\cos(\theta_1-\theta_2)\bigg(\partial_x\bigg(\frac{P(\eta)_z}{\sqrt{Q(\eta)}}\bigg)+\partial_z\bigg(\frac{P(\eta)_x}{\sqrt{Q}}\bigg)\bigg) \\
& \hspace{1.05in}
-\nu^2\sin^2(\theta_1-\theta_2)\bigg(\partial_x\bigg(\frac{\eta_x\eta_z}{\sqrt{Q(\eta)}}P(\eta)_z\bigg)+\partial_z\bigg(\frac{\eta_x\eta_z}{\sqrt{Q(\eta)}}P(\eta)_x\bigg)\bigg]\\
& \hspace{1.05in}
\mbox{}+2P(\eta)^3-2K(\eta)P(\eta)\bigg),\\
Q(\eta)&=1+\eta_x^2+\nu^2\eta_z^2+2\nu\cos(\theta_1-\theta_2)\eta_x\eta_z,\\
P(\eta)&=\frac{1}{2Q(\eta)^\frac{3}{2}}\bigg[\eta_{xx}+\nu^2\eta_{zz}+2\nu\cos(\theta_1-\theta_2)\eta_{xz} +\nu^2\sin^2(\theta_1-\theta_2)(\eta_{xx}\eta_z^2-2\eta_{xz}\eta_x\eta_z+\eta_{zz}\eta_x^2)\bigg],\\
K(\eta)&=\frac{1}{Q(\eta)^2}\nu^2\sin^2(\theta_1-\theta_2)(\eta_{xx}\eta_{zz}-\eta_{xz}^2),
\end{align*}
the tildes have been dropped for notational simplicity, and $\nu=2\pi/p_2$.


We proceed by formulating equations \eqref{basic 1}--\eqref{basic 4} as a Hamiltonian system in which the horizontal spatial direction $x$ plays the role of
the time-like variable (`spatial dynamics'). Our starting point is the observation that these equations follow from the formal variational principle
\begin{align}
\delta \int \int_{0}^{2\pi} \Bigg\{ & \int_{-\frac{1}{\beta}}^{\eta} \frac{1}{2} \left( \phi_{x}^{2}+\phi_{x_2}^{2}+\nu^{2} \phi_{z}^{2}+2\nu \cos(\theta_1-\theta_2)\phi_{x}\phi_{z}\right)\dx_2\nonumber \\
& \mbox{}+2\sqrt{Q(\eta)}P(\eta)^2
+\tfrac{1}{2} \eta^{2}  +\gamma(\eta_x \sin\theta_2+\nu\eta_z \sin\theta_1)\phi|_{{x_2}=\eta} \Bigg\} \dz\dx=0, \label{Luke's VP}
\end{align}
in which the variations are taken over \(\eta\) and \(\phi\) (a modified version of the classical variational principle introduced by Luke \cite{Luke67}). Because of the difficulty
in performing analysis on a variable domain, we use the change of variable
$$
\phi(x,{x_2},z)=\Phi(x,y,z), \qquad {x_2}=\begin{cases} y+(1+\beta y)\eta, & \beta>0, \\[2mm]
y+\ee^y \eta, & \beta=0, \end{cases}
$$
to map the variable fluid domain $\{-\frac{1}{\beta} < {x_2} < \eta(x,z)\}$ to the fixed domain $\{-\frac{1}{\beta}<y<0\}$. The variational principle
\eqref{Luke's VP} is transformed into
\begin{equation*}
\delta \mathcal{L}=0, \qquad \mathcal{L}=\int L(\eta,\eta_x,\eta_{xx},\Phi,\Phi_x) \dx,
\end{equation*}
in which
\begin{eqnarray*}
\lefteqn{L(\eta,\eta_x,\eta_{xx},\Phi,\Phi_x)}\\
& & =\int_0^{2\pi}\bigg\{\int_{-\frac{1}{\beta}}^0 \frac{1}{2K_2(\eta)}\bigg((\Phi_x-K_1(\eta)\eta_x\Phi_y)^2+K_2(\eta)^2\Phi_y^2+\nu^2(\Phi_z-K_1(\eta)\eta_z\Phi_y) \\
& & \hspace{4.4cm}\mbox{}+2\nu\cos(\theta_1-\theta_2)(\Phi_x-K_1(\eta)\eta_x\Phi_y)(\Phi_z-K_1(\eta)\eta_z\Phi_y) \bigg)\dy \\
& & \hspace{1.75cm}\mbox{}+2\sqrt{Q(\eta)}P(\eta)^2
+\tfrac{1}{2} \eta^{2}  +\gamma(\eta_x \sin\theta_2+\nu\eta_z \sin\theta_1)\Phi|_{y=0}\bigg\}\dz
\end{eqnarray*}
and
$$K_1(\eta)  = \begin{cases}
\dfrac{1+\beta y}{1+\beta \eta}, & \beta>0, \\[5mm]
\dfrac{\ee^y}{1+\ee^y\eta}, & \beta=0,
\end{cases}
\qquad K_2(\eta) = \begin{cases}
\dfrac{1}{1+\beta \eta}, & \beta>0, \\[5mm]
\dfrac{1}{1+\ee^y\eta}, & \beta=0.
\end{cases}$$

The next step is to perform a formal Legendre transformation (see Lanczos \cite[Appendix I]{Lanczos}) by
introducing the new coordinate
$$\rho = \eta_x$$
and momenta
\begin{align*}
\zeta&=\frac{\delta L}{\delta\eta_x}-\frac{\mathrm{d}}{\mathrm{d}x}\left(\frac{\delta L}{\delta \eta_{xx}}\right) \\
& =-\int_{-\frac{1}{\beta}}^0 \frac{K_1(\eta)}{K_2(\eta)}
\Big((\Phi_x-K_1(\eta)\eta_x\Phi_y)\Phi_y + \nu\cos(\theta_1-\theta_2)(\Phi_z-K_1(\eta)\eta_z\Phi_y)\Phi_y \Big)\dy\\
& \qquad\mbox{}-10\frac{P(\eta)^2}{\sqrt{Q(\eta)}}(\eta_x+\nu\cos(\theta_1-\theta_2)\eta_z)\\
& \qquad\mbox{}+\frac{4P(\eta)}{Q(\eta)}\nu^2\sin^2(\theta_1-\theta_2)(-\eta_{xz}\eta_z+\eta_{zz}\eta_x) + \gamma\sin \theta_2 \Phi|_{y=0}\\
& \qquad\mbox{}-\frac{\mathrm{d}}{\mathrm{d}x}\left(\frac{2P(\eta)}{Q(\eta)}(1+\nu^2\sin^2(\theta_1-\theta_2)\eta_z^2)\right),\\[2mm]
\xi&=\frac{\delta L}{\delta\eta_{xx}} \\
&=\frac{2P(\eta)}{Q(\eta)}(1+\nu^2\sin^2(\theta_1-\theta_2)\eta_z^2), \displaybreak[4]\\[2mm]
\Psi&=\frac{\delta L}{\delta \Phi_x} \\
&=\frac{1}{K_2(\eta)}(\Phi_x - K_1(\eta)\eta_x\Phi_y)+\frac{1}{K_2(\eta)}\nu\cos(\theta_1-\theta_2)(\Phi_z - K_1(\eta)\eta_z\Phi_y),
\end{align*}
and defining the Hamiltonian by
\begin{align}
H(&\eta,\rho,\Phi,\zeta,\xi,\Psi) \nonumber\\
&=\int_S\zeta\eta_x\dz+\int_S\xi\eta_{xx}\dz+\int_\Sigma \Psi\Phi_x\dy\dz-L(\eta,\eta_x,\eta_{xx},\Phi,\Phi_x) \nonumber\\
&=\int_\Sigma \bigg\{\frac{K_2(\eta)}{2}(\Psi^2-\Phi_y^2)
- \frac{1}{2K_2(\eta)}\nu^2\sin^2(\theta_1-\theta_2)(\Phi_z-K_1(\eta)\eta_z\Phi_y)^2 \nonumber \\
& \hspace{1.5cm}\mbox{}+K_1(\eta)\rho\Phi_y\Psi - \nu\cos(\theta_1-\theta_2)(\Phi_z-K_1(\eta)\eta_z\Phi_y)\Psi\bigg\}\dy\dz \nonumber \\
& \qquad\mbox{}+\int_S \bigg\{\zeta\rho
-\frac{1}{2}\eta^2-\gamma(\rho\sin\theta_2+\nu\eta_z\sin\theta_1)\Phi|_{y=0}
+\frac{Q(\eta,\rho)^{5/2}\xi^2}{2(1+\nu^2\sin^2(\theta_1-\theta_2)\eta_z^2)^2} \nonumber \\
& \hspace{2.1cm}\mbox{}+\frac{\xi}{1+\nu^2\sin^2(\theta_1-\theta_2)\eta_z^2}
\big(
\!-\!(1+\sin^2(\theta_1-\theta_2)\rho^2)\nu^2\eta_{zz} \nonumber \\
& \hspace{2.65in}\mbox{}+2\nu^2\sin^2(\theta_1-\theta_2)\rho\rho_z\eta_z-2\nu\cos(\theta_1-\theta_2)\rho_z\big)\!\bigg\}\!\dz, \label{Formula for H}
\end{align}
where $S=(0,2\pi)$, $\Sigma=(-\tfrac{1}{\beta},0)\times (0,2\pi)$ and
$$Q(\eta,\rho) = 1+\rho^2+\nu^2\eta_z^2+2\nu\cos(\theta_1-\theta_2)\rho\eta_z.$$
Hamilton's equations are
\begin{align}
\eta_x & = \frac{\delta H}{\delta \zeta} \nonumber \\
& =\rho, \label{Ham eq 1}\\[2mm]
\rho_x & = \frac{\delta H}{\delta \xi} \nonumber \\
& = \frac{Q(\eta,\rho)^{5/2}\xi}{(1+\nu^2\sin^2(\theta_1-\theta_2)\eta_z^2)^2}
-\frac{(1+\sin^2(\theta_1-\theta_2)\rho^2)\nu^2\eta_{zz}}{1+\nu^2\sin^2(\theta_1-\theta_2)\eta_z^2} \nonumber \\
& \qquad\mbox{}
-\frac{2\nu\cos(\theta_1-\theta_2)\rho_z}{1+\nu^2\sin^2(\theta_1-\theta_2)\eta_z^2} 
+\frac{2\nu^2\sin^2(\theta_1-\theta_2)\rho\rho_z\eta_z}{1+\nu^2\sin^2(\theta_1-\theta_2)\eta_z^2} , \label{Ham eq 2}\\[2mm]
\Phi_x & = \frac{\delta H}{\delta \Psi} \nonumber \\
& = K_2(\eta)\Psi +K_1(\eta)\rho\Phi_y -\nu\cos(\theta_1-\theta_2)(\Phi_z-K_1(\eta)\eta_z\Phi_y), \label{Ham eq 3}\displaybreak\\[2mm]
-\zeta_x & =  \frac{\delta H}{\delta \eta} \nonumber \\
& = \int_{-\frac{1}{\beta}}^0 \bigg\{-\frac{K_2(\eta)^2K_3}{2}(\Psi^2-\Phi_y^2)
- \frac{K_3}{2}\nu^2\sin^2(\theta_1-\theta_2)(\Phi_z-K_1(\eta)\eta_z\Phi_y)^2  \nonumber \\
& \hspace{2cm}\mbox{}-K_1(\eta)K_2(\eta)K_3\rho\Phi_y\Psi-\nu^2\sin^2(\theta_1-\theta_2)K_1(\eta)K_3\eta_z\Phi_y(\Phi_z-K_1(\eta)\eta_z\Phi_y)\nonumber \\
& \hspace{2cm}\mbox{}-\nu\cos(\theta_1-\theta_2)K_1(\eta)K_2(\eta)K_3\eta_z\Phi_y\Psi -[\nu\cos(\theta_1-\theta_2)K_1(\eta)\Phi_y\Psi]_z \nonumber \\
& \hspace{2cm}\mbox{}-\left[\frac{K_1(\eta)}{K_2(\eta)}\nu^2\sin^2(\theta_1-\theta_2)(\Phi_z-K_1(\eta)\eta_z\Phi_y)\Phi_y\right]_z
\bigg\}\dy\nonumber \\
& \qquad\mbox{}+\left[\frac{2\nu^2\sin^2(\theta_1-\theta_2)Q(\eta,\rho)^{5/2}\xi^2\eta_z}{(1+\nu^2\sin^2(\theta_1-\theta_2)\eta_z^2)^3}\right]_z \nonumber \\
& \qquad\mbox{}-\left[\frac{5\nu^2Q(\eta,\rho)^{3/2}\xi^2\eta_z}{2(1+\nu^2\sin^2(\theta_1-\theta_2)\eta_z^2)^2}\right]_z
-\left[\frac{5\nu\cos(\theta_1-\theta_2)Q(\eta,\rho)^{3/2}\xi^2\rho}{2(1+\nu^2\sin^2(\theta_1-\theta_2)\eta_z^2)^2}\right]_z\nonumber \\
& \qquad\mbox{}+\bigg[\frac{2\nu\sin^2(\theta_1-\theta_2)\xi\eta_z}{(1+\nu^2\sin^2(\theta_1-\theta_2)\eta_z^2)^2}
\big(
\!\!-\!(1+\sin^2(\theta_1-\theta_2)\rho^2)\nu^2\eta_{zz}\nonumber \\
& \hspace{2.5in}\mbox{}+2\nu^2\sin^2(\theta_1-\theta_2)\rho\rho_z\eta_z-2\nu\cos(\theta_1-\theta_2)\rho_z\big)\!\bigg]_z\nonumber \\
& \qquad\mbox{}-\left[\frac{(1+\sin^2(\theta_1-\theta_2)\rho^2)\nu^2\xi}{1+\nu^2\sin^2(\theta_1-\theta_2)\eta_z^2}\right]_{zz}
\!\!-\left[\frac{2\nu^2\sin^2(\theta_1-\theta_2)\rho\rho_z\xi}{1+\nu^2\sin^2(\theta_1-\theta_2)\eta_z^2}\right]_z
-\eta+\gamma\nu\sin\theta_1\Phi_z|_{y=0}, \label{Ham eq 4}\\[2mm]
-\xi_x & = \frac{\delta H}{\delta \rho} \nonumber \\
& =\zeta-\gamma\sin\theta_2\Phi|_{y=0} + \int_{-\frac{1}{\beta}}^0 K_1(\eta)\Psi\Phi_y \dy
+ \frac{5Q(\eta,\rho)^{3/2}\xi^2\rho}{2(1+\nu^2\sin^2(\theta_1-\theta_2)\eta_z^2)^2} \nonumber \\
& \qquad\mbox{}+ \frac{5\nu\cos(\theta_1-\theta_2)Q(\eta,\rho)^{3/2}\xi^2\eta_z}{2(1+\nu^2\sin^2(\theta_1-\theta_2)\eta_z^2)^2} 
- \frac{2\nu^2\sin^2(\theta_1-\theta_2)\xi\rho\eta_{zz}}{1+\nu^2\sin^2(\theta_1-\theta_2)\eta_z^2}  \nonumber \\
& \qquad\mbox{}-\left[\frac{2\nu^2\sin^2(\theta_1-\theta_2)\xi\eta_z}{1+\nu^2\sin^2(\theta_1-\theta_2)\eta_z^2}\right]_z\rho
+\left[\frac{2\nu\cos(\theta_1-\theta_2)\xi}{1+\nu^2\sin^2(\theta_1-\theta_2)\eta_z^2}\right]_z, \label{Ham eq 5}\\[2mm]
-\Psi_x & =  \frac{\delta H}{\delta \Phi} \\
&=(K_2(\eta)\Phi_y)_y -(K_1(\eta)\Psi\rho)_y - \left(\frac{K_1(\eta)}{K_2(\eta)}\nu^2\sin^2(\theta_1-\theta_2)(\Phi_z-K_1(\eta)\eta_z\Phi_y)\eta_z\right)_y\nonumber \\
& \qquad\mbox{}+\left[\frac{1}{K_2(\eta)}\nu^2\sin^2(\theta_1-\theta_2)(\Phi_z-K_1(\eta)\eta_z\Phi_y)\right]_z \nonumber \\
& \qquad\mbox{}+ \nu\cos(\theta_1-\theta_2)\Psi_z- \nu\cos(\theta_1-\theta_2)(K_1(\eta)\eta_z\Psi)_y, \label{Ham eq 6}
\end{align}
where
$$K_3  = \begin{cases}
\beta, & \beta>0, \\[5mm]
\ee^y, & \beta=0,
\end{cases}$$
with boundary conditions
\begin{alignat}{2}
& - K_2(\eta)\Phi_y =0, && y = - \tfrac{1}{\beta}, \label{BC 1}\\
& - K_2(\eta)\Phi_y +K_1(\eta)\Psi\rho+\frac{K_1(\eta)}{K_2(\eta)}\nu^2\sin^2(\theta_1-\theta_2)(\Phi_z-K_1(\eta)\eta_z\Phi_y)\eta_z \nonumber \\
& \qquad\mbox{}+\nu\cos(\theta_1-\theta_2)K_1(\eta)\eta_z\Psi-\gamma(\rho\sin\theta_2+\nu\eta_z \sin\theta_1) = 0,\hspace{2cm} && y=0. \label{BC 2}
\end{alignat}
We also introduce a bifurcation parameter $\mu_1$ by
writing $\nu=\nu_0+\mu_1$, where $\nu_0$ is a reference value for $\nu$ to be chosen later.

To place equations \eqref{Ham eq 1}--\eqref{BC 2} on a rigorous footing, we introduce the spaces
\begin{align*}
H_\mathrm{per}^m(S)&=\{w\in H_\mathrm{loc}^m(\mathbb{R}):\mbox{$w(z+2\pi)=w(z)$ for all $z\in\mathbb{R}$}\},\\
H_\mathrm{per}^m(\Sigma)&=\{w\in H_\mathrm{loc}^m((-\tfrac{1}{\beta},0)\times \mathbb{R}):\mbox{$w(y,z+2\pi)=w(y,z)$ for all $(y,z)\in(-\tfrac{1}{\beta},0)\times\mathbb{R}$}\}
\end{align*}
for  $m \in {\mathbb N}_0$; recall that $H^1_\mathrm{per}(S)$ and $H^2_\mathrm{per}(\Sigma)$ are Banach algebras, while the formulae
$w \mapsto w|_{y=0}$ and $w \mapsto w|_{y=-\frac{1}{\beta}}$ (for $\beta>0$) define bounded linear mappings $H^m(\Sigma) \rightarrow H^{m-1}(S)$ for $m \in {\mathbb N}$. The following
proposition relates to mappings appearing in the above equations (see Bagri \& Groves \cite[Proposition 2.1]{BagriGroves15} for parts (i), (ii) and Buffoni \& Toland \cite{BuffoniToland} for parts (iii), (iv)).

\begin{proposition} \label{Analytic stuff}
\hspace{1in}
\begin{list}{(\roman{count})}{\usecounter{count}}
\item
The formula
$(w_1,w_2) \mapsto w_1w_2$
defines bounded bilinear mappings
$L^2_\mathrm{per}(\Sigma) \times H^1_\mathrm{per}(S) \rightarrow L^2_\mathrm{per}(\Sigma)$,
$H_\mathrm{per}^1(\Sigma) \times L^2_\mathrm{per}(S) \rightarrow L^2_\mathrm{per}(\Sigma)$
and $H_\mathrm{per}^1(\Sigma) \times H^1_\mathrm{per}(S) \rightarrow H^1_\mathrm{per}(\Sigma)$.
\item
The formula
$$(w_1,w_2) \mapsto \int_{-\frac{1}{\beta}}^0 w_1(\cdot,y)w_2(\cdot,y)\dy$$
defines bounded bilinear mappings
$L_\mathrm{per}^2(\Sigma) \times H_\mathrm{per}^1(\Sigma) \rightarrow$~$L_\mathrm{per}^2(S)$,
$H_\mathrm{per}^1(\Sigma) \times L_\mathrm{per}^2(\Sigma) \rightarrow L_\mathrm{per}^2(S)$
and $H_\mathrm{per}^1(\Sigma) \times H_\mathrm{per}^1(\Sigma) \rightarrow H_\mathrm{per}^1(S)$.
\item
The formulae
$\eta \mapsto (1+\beta \eta)^{-1}-1$ (for $\beta>0$) and $\eta \mapsto (1+\ee^y \eta)^{-1}-1$ (for $\beta=0$)
yield mappings $H^3_\mathrm{per}(S) \rightarrow H^3_\mathrm{per}(S)$ and $H^3_\mathrm{per}(S) \rightarrow H^3_\mathrm{per}(\Sigma)$
respectively which are defined and analytic in a neighbourhood of the origin.
\item
For each $n \in {\mathbb N}$ the formula $\eta \mapsto (1+(\nu_0+\mu)^2 \sin^2(\theta_1-\theta_2)\eta_z^2)^{-n}-1$ yields a mapping ${\mathbb R} \times H^3_\mathrm{per}(S) \rightarrow H^2_\mathrm{per}(S)$ which is defined and analytic in a neighbourhood of the origin.
\item
For each $n \in {\mathbb N}$ the formula $(\eta,\rho) \mapsto Q(\eta,\rho)^\frac{n}{2}$ yields a mapping $H^3_\mathrm{per}(S) \times H^2_\mathrm{per}(S) \rightarrow H^2_\mathrm{per}(S)$ which is defined and analytic in a neighbourhood of the origin.
 \end{list}
 \end{proposition}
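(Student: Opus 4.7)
The assertions in (i) and (ii) are standard Sobolev product and pairing estimates. For (i), use the one-dimensional Sobolev embedding $H^1_\mathrm{per}(S)\hookrightarrow L^\infty_\mathrm{per}(S)$ to conclude $\norm{w_1w_2}_{L^2_\mathrm{per}(\Sigma)}\leq \norm{w_2}_{L^\infty_\mathrm{per}(S)}\norm{w_1}_{L^2_\mathrm{per}(\Sigma)}\lesssim \norm{w_2}_{H^1_\mathrm{per}(S)}\norm{w_1}_{L^2_\mathrm{per}(\Sigma)}$, and symmetrically for the other $L^2$-valued mapping; for the $H^1\times H^1\to H^1$ case, exploit that $w_2$ depends only on $z$, so the product rule gives $\partial_z(w_1w_2)=(\partial_zw_1)w_2+w_1(\partial_zw_2)$ and $\partial_y(w_1w_2)=(\partial_yw_1)w_2$, each term estimated by the $L^2$-valued cases just established. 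For (ii), apply Cauchy--Schwarz in the $y$-variable pointwise in $z$ and integrate over $z$; the $H^1$-valued version follows by differentiating under the integral (justified by approximation) and applying the $L^2$-valued versions to the resulting terms.

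Parts (iii) and (iv) follow from Neumann-series expansions in Banach algebras. For (iii) with $\beta>0$ write
\[
(1+\beta\eta)^{-1}-1=\sum_{n\geq 1}(-\beta\eta)^n,
\]
which converges in $H^3_\mathrm{per}(S)$ for $\norm{\eta}_{H^3_\mathrm{per}(S)}<1/\beta$ because $H^3_\mathrm{per}(S)$ is a Banach algebra; the series representation automatically gives analyticity of the map. For the $\beta=0$ variant, one first observes that multiplication by $\ee^y$ defines a bounded linear map $H^3_\mathrm{per}(S)\to H^3_\mathrm{per}(\Sigma)$ (since all $y$-derivatives of $\ee^y$ coincide with $\ee^y$ and are square-integrable on $(-\infty,0)$), and then applies the same Neumann expansion inside the Banach algebra $H^3_\mathrm{per}(\Sigma)$. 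For (iv), expand
\[
(1+(\nu_0+\mu)^2\sin^2(\theta_1-\theta_2)\eta_z^2)^{-n}-1
\]
as a binomial series in $(\nu_0+\mu)^2\sin^2(\theta_1-\theta_2)\eta_z^2$, use that $\eta\mapsto \eta_z$ is bounded $H^3_\mathrm{per}(S)\to H^2_\mathrm{per}(S)$, and invoke the algebra property of $H^2_\mathrm{per}(S)$ together with the polynomial (hence analytic) dependence on the scalar $\mu$.

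For (v), write $Q(\eta,\rho)=1+X$ with
\[
X:=\rho^2+\nu^2\eta_z^2+2\nu\cos(\theta_1-\theta_2)\rho\,\eta_z,
\]
which depends bilinearly and hence analytically on $(\eta,\rho)\in H^3_\mathrm{per}(S)\times H^2_\mathrm{per}(S)$ with values in $H^2_\mathrm{per}(S)$, by the algebra property of the latter. The binomial series
\[
(1+X)^{n/2}-1=\sum_{k\geq 1}\binom{n/2}{k}X^k
\]
converges in $H^2_\mathrm{per}(S)$ for $\norm{X}_{H^2_\mathrm{per}(S)}$ sufficiently small, again by the algebra property, and the conclusion follows by composing two analytic maps. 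The principal obstacle I anticipate is the infinite-depth case in (iii)--(v), where the strip $\Sigma=(-\infty,0)\times(0,2\pi)$ is unbounded: one must verify that $\ee^y$ together with its derivatives provides a bounded multiplier between the relevant Sobolev spaces, which hinges on the exponential decay of $\ee^y$ as $y\to-\infty$ to render all integrations in $y$ finite, and one should moreover check that the Banach-algebra property of $H^3_\mathrm{per}(\Sigma)$ and $H^2_\mathrm{per}(\Sigma)$ remains valid on this semi-infinite strip.
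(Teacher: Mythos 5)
The paper itself offers no proof of this proposition: parts (i) and (ii) are quoted from Bagri \& Groves and parts (iii)--(v) rest on composition results of Buffoni \& Toland, so you are really being asked to supply the standard arguments, and on the whole you do. Your treatment of (iii)--(v) by Neumann and binomial series in the Banach algebras $H^3_\mathrm{per}$ and $H^2_\mathrm{per}$ is correct, and you rightly isolate the only delicate point there, namely that multiplication by $\ee^y$ must be a bounded map $H^3_\mathrm{per}(S)\rightarrow H^3_\mathrm{per}(\Sigma)$ on the semi-infinite strip (which it is, since $\partial_y^j\partial_z^k(\ee^y\eta)=\ee^y\partial_z^k\eta$ and $\ee^y\in L^2(-\infty,0)$). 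Note that in (iv) and (v) the target spaces live on the one-dimensional circle $S$, so the unbounded strip plays no role there; your closing worry is only relevant to the $\beta=0$ case of (iii) and to (i), (ii).

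The one genuine soft spot is the word ``symmetrically'' in part (i). The first mapping does follow from $H^1_\mathrm{per}(S)\hookrightarrow L^\infty(S)$, but the mirror-image argument for $H_\mathrm{per}^1(\Sigma)\times L^2_\mathrm{per}(S)\rightarrow L^2_\mathrm{per}(\Sigma)$ would require $H^1_\mathrm{per}(\Sigma)\hookrightarrow L^\infty(\Sigma)$, which fails in two dimensions. What saves the estimate is the anisotropy: since $w_2$ depends only on $z$, it suffices to control $\sup_z\norm{w_1(\cdot,z)}_{L^2_y}$, and the function $z\mapsto\int|w_1(y,z)|^2\,\mathrm{d}y$ lies in $W^{1,1}_\mathrm{per}(S)\hookrightarrow L^\infty(S)$ because its $z$-derivative is bounded by $2\norm{w_1(\cdot,z)}_{L^2_y}\norm{\partial_zw_1(\cdot,z)}_{L^2_y}$; in other words one needs the mixed-norm embedding $H^1_\mathrm{per}(\Sigma)\hookrightarrow L^\infty_zL^2_y$, not an $L^\infty(\Sigma)$ bound. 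Exactly the same device is what justifies ``apply Cauchy--Schwarz in $y$ and integrate over $z$'' in part (ii), where again one factor must be placed in $L^\infty_zL^2_y$. With that embedding stated explicitly your argument closes, and it is the same argument the cited references use.
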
 

Let us now define
\begin{align*}
X & \!=\!\{v\!=\!(\eta,\rho,\Phi,\zeta,\xi,\Psi)\!\in\!\ H_\mathrm{per}^3(S)\times H_\mathrm{per}^2(S)\times H_\mathrm{per}^2(\Sigma)\times H_\mathrm{per}^1(S)\times H_\mathrm{per}^2(S)\times H_\mathrm{per}^1(\Sigma)\}, \\
Z & \!=\!\{v\!=\!(\eta,\rho,\Phi,\zeta,\xi,\Psi)\!\in\!\ H_\mathrm{per}^2(S)\times H_\mathrm{per}^1(S)\times H_\mathrm{per}^1(\Sigma)\times L_\mathrm{per}^2(S)\times H_\mathrm{per}^1(S)\times L_\mathrm{per}^2(\Sigma)\}.
\end{align*}
The following lemma, which is a consequence of the previous proposition, shows that the right-hand sides of equations
\eqref{Ham eq 1}--\eqref{Ham eq 6} define an analytic mapping $v_\mathrm{H}^{\mu_1}: \Lambda_1 \times U \rightarrow Z$, where
$U$ is a neighbourhood of the origin of $X$.
In the notation of the lemma, these equations define a quasilinear evolutionary system
\begin{equation}
v_x = v_\mathrm{H}^{\mu_1}(v) \label{First Ham formulation}
\end{equation}
with nonlinear boundary conditions given by
\begin{alignat}{2}
& \Phi_y = F^{\mu_1}(\eta,\rho,\Phi,\zeta,\xi,\Psi), && y = - \tfrac{1}{\beta}, \label{BC alt 1}\\
& \Phi_y + \gamma(\rho\sin\theta_2 + \nu_0\eta_z \sin\theta_1) = F^{\mu_1}(\eta,\rho,\Phi,\zeta,\xi,\Psi),\hspace{2cm} && y=0, \label{BC alt 2}
\end{alignat}
where
\begin{eqnarray*}
\lefteqn{F^{\mu_1}(\eta,\rho,\Phi,\zeta,\xi,\Psi)} \\
& &=K_2(\eta)K_3\eta\Phi_y+K_1(\eta)\Psi\rho+(\nu_0+\mu_1)\cos(\theta_1-\theta_2)K_1(\eta)\eta_z\Psi \\
&&  \qquad\mbox{}
+\frac{K_1(\eta)}{K_2(\eta)}(\nu_0+\mu_1)^2\sin^2(\theta_1-\theta_2)(\Phi_z-K_1(\eta)\eta_z\Phi_y)\eta_z - \frac{K_1(\eta)}{K_2(\eta)}\gamma\mu_1\eta_z\sin\theta_1,
\end{eqnarray*}
and we note for later use that
\begin{eqnarray*}
\lefteqn{\mathrm{d}F^{\mu_1}[\eta,\rho,\Phi,\zeta,\xi,\Psi](\tilde{\eta},\tilde{\rho},\tilde{\Phi},\tilde{\zeta},\tilde{\xi},\tilde{\Psi})} \\
& & = -K_2(\eta)^2K_3^2\eta\Phi_y\tilde{\eta}+K_2(\eta)K_3(\Phi_y\tilde{\eta}+\eta\tilde{\Phi}_y) -K_1(\eta)K_2(\eta)K_3\Psi\rho\tilde{\eta} +K_1(\eta)(\Psi\tilde{\rho}+\rho\tilde{\Psi})\\
& & \qquad\mbox{}+(\nu_0+\mu_1)\cos(\theta_1-\theta_2)\big(-K_1(\eta)K_2(\eta)K_3\eta_z\Psi\tilde{\eta} +K_1(\eta)(\Psi\tilde{\eta}_z + \eta_z\tilde{\Psi})\big)\\
& & \qquad\mbox{}+\frac{K_1(\eta)}{K_2(\eta)}(\nu_0+\mu_1)^2\sin^2(\theta_1-\theta_2) \\
& & \hspace{1.75cm}\times\!\left(\!(\Phi_z\!-\!K_1(\eta)\eta_z\Phi_y)\tilde{\eta}_z\!+\!
\eta_z\big(\tilde{\Phi}_z \!+\! K_1(\eta)K_2(\eta)K_3\eta_z\Phi_y\tilde{\eta}\!-\!K_1(\eta)(\Phi_y\tilde{\eta}_z\!+\!\eta_z\tilde{\Phi}_y)
\big)\!\right) \\
& & \qquad\mbox{}-\frac{K_1(\eta)}{K_2(\eta)}\gamma\mu_1\sin\theta_1\tilde{\eta}_z.
\end{eqnarray*}

This system is reversible; the reverser is given by
\begin{eqnarray*}
\lefteqn{R\big(\eta(z),\rho(z),\Phi(y,z),\zeta(z),\xi(z),\Psi(y,z)\big)} \qquad\\
& & =\big(\eta(-z),-\rho(-z),-\Phi(y,-z),-\zeta(-z),\xi(-z),\Psi(y,-z)\big).
\end{eqnarray*}

\begin{lemma} \label{Properties of vH and F}
There exist neighbourhoods $U$ and $\Lambda_1$ of the origin in respectively $X$ and ${\mathbb R}$ with the following properties.
\begin{list}{(\roman{count})}{\usecounter{count}}
\item
The formula $(\mu_1,v) \mapsto v_\mathrm{H}^{\mu_1}(v)$, where $v_\mathrm{H}^{\mu_1}(v)$ is defined by the right-hand sides of \eqref{Ham eq 1}--\eqref{Ham eq 6}
(with $\nu=\nu_0+\mu_1$),
defines an analytic mapping $\Lambda_1 \times U \rightarrow Z$.
\item
The formula $(\mu_1,v) \mapsto F^{\mu_1}(v)$ defines an analytic mapping $\Lambda_1 \times U \rightarrow H^1(\Sigma)$.
\item
The derivative $\mathrm{d}F^{\mu_1}[v] \in {\mathcal L}(X,H^1(\Sigma))$ has a unique extension $\widetilde{\mathrm{d}F}\vphantom{}^{\mu_1}[v] \in {\mathcal L}(Z,L^2(\Sigma))$ which
depends analytically upon $(\mu_1,v) \in \Lambda_1 \times U$.
\end{list}
\end{lemma}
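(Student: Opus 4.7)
The plan is to establish each of the three parts by systematic, term-by-term inspection of the right-hand sides of \eqref{Ham eq 1}--\eqref{Ham eq 6} and of the formulas for $F^{\mu_1}$ and $\mathrm{d}F^{\mu_1}$, using the mapping properties collected in Proposition \ref{Analytic stuff}. The general principle is that every such term has the schematic form (analytic scalar function of $\eta$, $\rho$ depending possibly on the $y$-weights $K_1(\eta), K_2(\eta), K_3$) times a product of derivatives of the phase-space variables. Parts (iii)--(v) of Proposition \ref{Analytic stuff} provide the analyticity of the nonlinear building blocks $K_1(\eta), K_2(\eta), Q(\eta,\rho)^{n/2}, (1+(\nu_0+\mu_1)^2\sin^2(\theta_1-\theta_2)\eta_z^2)^{-n}$ in the relevant Sobolev space, while parts (i) and (ii) handle the pointwise products and $y$-integrations. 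Since sums, products and compositions of analytic maps into Hilbert spaces are analytic, proving that the individual terms land in the correct factor of $Z$ (resp.\ $H^1(\Sigma)$, $L^2(\Sigma)$) will yield the claims.

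For (i), I would check each Hamilton equation against the corresponding component of $Z$: \eqref{Ham eq 1} is trivial since $\rho \in H^2_\mathrm{per}(S) \subset H^2_\mathrm{per}(S)$; \eqref{Ham eq 2} must land in $H^1_\mathrm{per}(S)$ and uses that $\eta_{zz} \in H^1_\mathrm{per}(S)$ and $\rho_z \in H^1_\mathrm{per}(S)$; \eqref{Ham eq 3} lands in $H^1_\mathrm{per}(\Sigma)$ via the product estimate $H^1_\mathrm{per}(\Sigma)\times H^1_\mathrm{per}(S)\to H^1_\mathrm{per}(\Sigma)$; \eqref{Ham eq 5} similarly maps into $H^1_\mathrm{per}(S)$; \eqref{Ham eq 6} lands in $L^2_\mathrm{per}(\Sigma)$, where the only concern is the $y$-derivative $(K_2(\eta)\Phi_y)_y$ which is $L^2$ because $\Phi_y\in H^1_\mathrm{per}(\Sigma)$. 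For (ii), the same strategy applied to the six terms of $F^{\mu_1}$ shows analyticity into $H^1(\Sigma)$, using the Banach algebra structure of $H^2_\mathrm{per}(\Sigma)$.

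For (iii), differentiating $F^{\mu_1}$ gives a finite sum each of whose terms is linear in $\tilde v$ and involves at most one derivative of a $\tilde v$-component. For $\tilde v\in X$ the same product estimates place everything in $H^1(\Sigma)$. The key observation for the extension is that each term contains exactly one factor taken from the test element $\tilde v$: replacing $\tilde v\in X$ by $\tilde v\in Z$ drops the regularity of that factor by one derivative, hence drops the codomain by one derivative from $H^1(\Sigma)$ to $L^2(\Sigma)$. For example, $\Phi_y\tilde\eta_z$ is in $H^1(\Sigma)$ for $\tilde\eta\in H^3_\mathrm{per}(S)$ but only in $L^2(\Sigma)$ for $\tilde\eta\in H^2_\mathrm{per}(S)$, which is precisely what is claimed. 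Uniqueness of the extension is automatic from the density of $X$ in $Z$.

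The main obstacle is the sheer bookkeeping for \eqref{Ham eq 4}, which contains several terms with second $z$-derivatives $[\,\cdot\,]_{zz}$ that must land in $L^2_\mathrm{per}(S)$. One must verify, for each such term, that the quantity inside the brackets is in $H^2_\mathrm{per}(S)$; this is precisely where the regularity choices $\eta\in H^3_\mathrm{per}(S)$, $\rho,\xi\in H^2_\mathrm{per}(S)$ in the definition of $X$ have been calibrated so that Proposition \ref{Analytic stuff}(iv), (v) and the Banach algebra property of $H^2_\mathrm{per}(S)$ suffice. Once this is confirmed, no genuinely new estimates are required beyond those supplied by Proposition \ref{Analytic stuff}.
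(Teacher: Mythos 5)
Your proposal is correct and follows exactly the route the paper intends: the paper offers no written proof for this lemma beyond declaring it ``a consequence of the previous proposition'', i.e.\ a term-by-term verification of \eqref{Ham eq 1}--\eqref{Ham eq 6}, $F^{\mu_1}$ and $\mathrm{d}F^{\mu_1}$ against the product, trace and composition estimates of Proposition \ref{Analytic stuff}, which is precisely what you carry out. One small slip: your illustrative term $\Phi_y\tilde\eta_z$ actually remains in $H^1_\mathrm{per}(\Sigma)$ even for $\tilde\eta\in H^2_\mathrm{per}(S)$ (by the third mapping in Proposition \ref{Analytic stuff}(i)); the terms that genuinely force the codomain down to $L^2(\Sigma)$ in part (iii) are those containing $\tilde\Phi_y$, $\tilde\Phi_z$ or $\tilde\Psi$, but since $H^1(\Sigma)\subset L^2(\Sigma)$ this does not affect your conclusion.
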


It remains to confirm that \eqref{First Ham formulation} has a Hamiltonian structure; for this purpose we use the following lemma, which is proved by direct calculations and Proposition \ref{Analytic stuff}.

\begin{lemma}
\hspace{1in}
\begin{list}{(\roman{count})}{\usecounter{count}}
\item
The formula $(\mu_1,v) \mapsto H^{\mu_1}(v)$, where $H^{\mu_1}(v)$ is defined by the right-hand side of \eqref{Formula for H}
(with $\nu=\nu_0+\mu_1$),
defines an analytic mapping $\Lambda_1 \times U \rightarrow {\mathbb R}$.
\item
The derivative $\mathrm{d}H^{\mu_1}[v] \in X^\ast$ has a unique extension $\widetilde{\mathrm{d}H}\vphantom{}^{\mu_1}[v] \in Z^\ast$ which
depends analytically upon $(\mu_1,v) \in \Lambda_1 \times U$.
\item
The formula
$$\widetilde{\mathrm{d}H}\hspace{-4mm}{\hphantom{H}}^{\mu_1}[v](w) = \langle J v_\mathrm{H}^{\mu_1}(v),w\rangle,$$
where $\langle \cdot\,,\cdot \rangle$ denotes the $(L^2_\mathrm{per}(S))^6$ inner product and
$$J(\eta,\rho,\Phi,\zeta,\xi,\Psi)=(-\zeta,-\xi,-\Psi,\eta,\rho,\Phi),$$
holds for all $(\mu_1,v) \in \DD_\mathrm{H}$ and $w \in Z$, where
$$\DD_\mathrm{H} = \{(\mu_1,v) \in \Lambda_1 \times U: \mbox{\eqref{BC alt 1}, \eqref{BC alt 2} are satisfied}\},$$
so that the gradient $\nabla H^{\mu_1}(v)$ exists (and equals $J v_\mathrm{H}^{\mu_1}(v)$)
for all $(\mu_1,v) \in \DD_\mathrm{H}$ and extends to an analytic function of $(\mu_1,v) \in \Lambda_1 \times U$.
\end{list}
\end{lemma}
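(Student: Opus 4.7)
The plan is to handle parts (i), (ii) and (iii) in turn, each as a consequence of Proposition \ref{Analytic stuff} together with integration by parts that recovers the Legendre-transform structure used in deriving Hamilton's equations \eqref{Ham eq 1}--\eqref{Ham eq 6}.

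For part (i), I would inspect the integrand in \eqref{Formula for H} summand by summand. Every term is a product of components of $v$ and their derivatives (at the orders accommodated by $X$), multiplied by analytic scalar compositions such as $K_1(\eta)$, $K_2(\eta)$, $Q(\eta,\rho)^{n/2}$ and $(1+(\nu_0+\mu_1)^2\sin^2(\theta_1-\theta_2)\eta_z^2)^{-n}$, whose analyticity is supplied by Proposition \ref{Analytic stuff}(iii)--(v); parts (i) and (ii) of that proposition, together with standard Sobolev multiplication and embedding estimates, push the pointwise products into $L^1(\Sigma)$ or $L^1(S)$. Integration then yields an analytic map $\Lambda_1\times U \to \mathbb{R}$.

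For parts (ii) and (iii), I would differentiate under the integral to express $\mathrm{d}H^{\mu_1}[v](w)$ as a bilinear expression in $v$-dependent coefficients and $w$ together with its $y$- and $z$-derivatives up to the orders carried by $X$, and then integrate by parts in $y$ and $z$ (with periodicity killing the $z$-boundary contributions) to shift every derivative off $w$. What remains is an interior piece of schematic form $\int_\Sigma f\cdot w_{(\Phi,\Psi)} + \int_S g\cdot w_{(\eta,\rho,\zeta,\xi)}$, whose coefficients $f$, $g$ are assembled from $v$ by Proposition \ref{Analytic stuff} and hence lie in the $Z$-components paired with the corresponding slots of $w$, together with boundary contributions at $y=0$ and $y=-1/\beta$ of schematic form $\int_S h_0(v)\,w|_{y=0} + \int_S h_{-1/\beta}(v)\,w|_{y=-1/\beta}$, which are bounded on $Z$ via the continuous trace $H^1(\Sigma)\to H^{1/2}(S)$. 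The sum delivers the extension $\widetilde{\mathrm{d}H}^{\mu_1}[v]\in Z^\ast$, and analyticity in $(\mu_1,v)$ is inherited from the coefficients. Because $H$ was constructed by a Legendre transform from the action \eqref{Luke's VP}, a term-by-term comparison identifies the interior coefficients with $Jv_\mathrm{H}^{\mu_1}(v)$, while the boundary coefficients at $y=0$ and $y=-1/\beta$ coincide with the two sides of \eqref{BC alt 2} and \eqref{BC alt 1} respectively; these vanish on $\DD_\mathrm{H}$, yielding the gradient formula there. Since $v_\mathrm{H}^{\mu_1}$ is already analytic on $\Lambda_1\times U$ by Lemma \ref{Properties of vH and F}(i), the gradient extends to an analytic map as claimed.

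The hard part will be the bookkeeping: \eqref{Formula for H} has many summands, and every term in \eqref{Ham eq 1}--\eqref{Ham eq 6}, \eqref{BC alt 1}, \eqref{BC alt 2} must reappear as either an interior or a boundary contribution after integration by parts. The match is guaranteed at the formal level by the variational derivation of $H$, so no genuinely new idea is needed; but at the rigorous level I must check that each integration by parts is legitimate given the $X$-regularity of $v$ and that every trace used is paired with a coefficient of complementary Sobolev regularity.
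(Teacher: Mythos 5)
Your proposal is correct and matches the paper's intent exactly: the paper offers no written proof beyond the remark that the lemma ``is proved by direct calculations and Proposition \ref{Analytic stuff}'', and your term-by-term analyticity check plus integration by parts (interior terms giving $Jv_\mathrm{H}^{\mu_1}(v)$, boundary terms giving \eqref{BC alt 1}--\eqref{BC alt 2}, vanishing on $\DD_\mathrm{H}$) is precisely that direct calculation spelled out.
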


Altogether we conclude that $v_\mathrm{H}^{\mu_1}(v)=J^{-1}\nabla H^{\mu_1}(v)$ for $(\mu_1,v) \in \DD_\mathrm{H}$ defines the Hamiltonian vector field for the 
Hamiltonian system $(Z,\Omega,H^{\mu_1})$, where $\Omega: Z^2 \rightarrow {\mathbb R}$ is the constant symplectic $2$-form
\begin{eqnarray*}
\lefteqn{\Omega\big((\eta_1,\rho_1,\Phi_1,\zeta_1,\xi_1,\Psi_1),(\eta_2,\rho_2,\Phi_2,\zeta_2,\xi_2,\Psi_2)\big)} \\
& & = \langle J(\eta_1,\rho_1,\Phi_1,\zeta_1,\xi_1,\Psi_1),(\eta_2,\rho_2,\Phi_2,\zeta_2,\xi_2,\Psi_2)\rangle \\
& & =\int_S(\zeta_2\eta_1-\eta_2\zeta_1+\xi_2\rho_1-\rho_2\xi_1)\dz+\int_\Sigma(\Psi_2\Phi_1-\Phi_2\Psi_1)\dy\dz.
\end{eqnarray*}
Note that $\Omega$ is the exterior derivative of the parameter-independent $1$-form $\omega|_v$ given by
\begin{align*}
\omega|_{(\eta,\rho,\Phi,\zeta,\xi,\Psi)}(\tilde\eta,\tilde\rho,\tilde\Phi,\tilde\zeta,\tilde\xi,\tilde\Psi)&=\int_S (\eta\tilde\zeta+\rho\tilde\xi)\dz + \int_\Sigma \Phi\tilde\Psi \dy\dz \\
& = \langle \alpha(\eta,\rho,\Phi,\zeta,\xi,\Psi), (\tilde\eta,\tilde\rho,\tilde\Phi,\tilde\zeta,\tilde\xi,\tilde\Psi)\rangle,
\end{align*}
where
$$\alpha(\eta,\rho,\Phi,\zeta,\xi,\Psi)=(0,0,0,\eta,\rho,\Phi).$$

The system \eqref{First Ham formulation}--\eqref{BC alt 2} is unsuitable for analysis due to its nonlinear boundary conditions. We proceed by
replacing $\Phi$ with the new variable
$$\Gamma=\Phi-\partial_y \Delta^{-1} F^{\mu_1}(\eta,\rho,\Phi,\zeta,\xi,\Psi),$$
where $\Delta$ is the Dirichlet Laplacian in $\Sigma$. In the notation of the following lemma we find that
for $(\eta,\rho,\zeta,\xi,\Psi) \in W$ and $\mu_1 \in \Lambda_1$ the variable $\Phi \in V_1$ satisfies
\eqref{BC alt 1}, \eqref{BC alt 2} if and only if the variable $\Gamma \in V_2$ satisfies
\pagebreak
\begin{alignat}{2}
& \Gamma_y = 0, && y = - \tfrac{1}{\beta}, \label{Linear BC 1} \\
& \Gamma_y + \gamma(\rho\sin\theta_2 + \nu_0\eta_z \sin\theta_1) = 0,\hspace{2cm} && y=0, \label{Linear BC 2}
\end{alignat}
because
\begin{align*}
\Gamma_y &= \Phi_y - \partial_{yy} \Delta^{-1}F^{\mu_1}(\eta,\rho,\Phi,\zeta,\xi,\Psi) \\
&= \Phi_y - \partial_{yy} \Delta^{-1}F^{\mu_1}(\eta,\rho,\Phi,\zeta,\xi,\Psi) - \underbrace{\partial_{zz} \Delta^{-1}F^{\mu_1}(\eta,\rho,\Phi,\zeta,\xi,\Psi)}_{\displaystyle=0} \\
& = \Phi_y - F^{\mu_1}(\eta,\rho,\Phi,\zeta,\xi,\Psi) 
\end{align*}
for $y=0$ and $y=-\frac{1}{\beta}$.

\begin{lemma}\label{change-of-variables}
\hspace{1in}
\begin{list}{(\roman{count})}{\usecounter{count}}
\item
There exist neighbourhoods $V_1$, $V_2$ of the origin in $H_\mathrm{per}^2({\mathbb R})$ and $W$ of the origin in
$$
X_0 = \{(\eta,\rho,\zeta,\xi,\Psi)\!\in\!\ H_\mathrm{per}^3(S)\times H_\mathrm{per}^2(S)\times H_\mathrm{per}^1(S)\times H_\mathrm{per}^2(S)\times H_\mathrm{per}^1(\Sigma)\}
$$
such that $\Phi \mapsto \Gamma(\Phi,
(\eta,\rho,\zeta,\xi,\Psi),\mu_1)$ is an analytic diffeomorphism $V_1 \rightarrow V_2$ which, together with its inverse,
depends analytically upon $(\eta,\rho,\zeta,\xi,\Psi) \in W$ and $\mu_1 \in \Lambda_1$.
\item
The derivative $\mathrm{d}_1\Gamma[\Phi,(\eta,\rho,\zeta,\xi,\Psi),\mu_1] \in {\mathcal L}(H_\mathrm{per}^2(\Sigma))$ extends to an isomorphism in ${\mathcal L}(H_\mathrm{per}^1(\Sigma))$ which, together with its inverse,
depends analytically upon $\Phi \in V_1$,\linebreak
$(\eta,\rho,\zeta,\xi,\Psi) \in W$ and $\mu_1 \in \Lambda_1$.
\end{list}
\end{lemma}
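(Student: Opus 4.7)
The plan is to write $\Gamma = \Phi - G^{\mu_1}(\Phi,(\eta,\rho,\zeta,\xi,\Psi))$, where $G^{\mu_1} := \partial_y\Delta^{-1}F^{\mu_1}$, and apply the analytic inverse function theorem in $\Phi$ about the origin; the extension asserted in (ii) is then obtained separately by a one-derivative-lower elliptic regularity argument combined with a Neumann series.

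First I would verify that $G^{\mu_1}$ defines an analytic map from a neighbourhood of the origin in $H^2_{\mathrm{per}}(\Sigma) \times X_0 \times \Lambda_1$ into $H^2_{\mathrm{per}}(\Sigma)$. By Lemma \ref{Properties of vH and F}(ii), $(\mu_1,v) \mapsto F^{\mu_1}(v)$ is analytic into $H^1_{\mathrm{per}}(\Sigma)$; standard elliptic regularity for the Dirichlet Laplacian on the (semi\nobreakdash-)infinite strip $\Sigma$ with $z$-periodic boundary conditions gives $\Delta^{-1}: H^1_{\mathrm{per}}(\Sigma) \to H^3_{\mathrm{per}}(\Sigma)$ boundedly, and $\partial_y$ then maps $H^3_{\mathrm{per}}(\Sigma) \to H^2_{\mathrm{per}}(\Sigma)$. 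To apply the analytic inverse function theorem I need $\mathrm{d}_\Phi G^{\mu_1}[0,0] = 0$, which reduces to $\mathrm{d}_\Phi F^{\mu_1}[0,0] = 0$; inspecting the explicit formula for $\mathrm{d}F^{\mu_1}$ displayed just above Lemma \ref{Properties of vH and F}, one sees that every $\tilde\Phi$\nobreakdash-term carries at least one factor of $\eta$, $\rho$, $\eta_z$, $\Phi_y$, $\Phi_z$ or $\Psi$, each of which vanishes at the origin. Hence $\mathrm{d}_1\Gamma[0,0,0] = I$ on $H^2_{\mathrm{per}}(\Sigma)$, and the analytic inverse function theorem (with parameters) yields the diffeomorphism $\Phi \mapsto \Gamma$ between the asserted neighbourhoods $V_1$, $V_2$ together with joint analytic dependence on $(\eta,\rho,\zeta,\xi,\Psi)\in W$ and $\mu_1 \in \Lambda_1$, establishing (i).

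For (ii), Lemma \ref{Properties of vH and F}(iii) furnishes the continuous extension $\widetilde{\mathrm{d}F}\vphantom{}^{\mu_1}[v] \in \mathcal{L}(Z, L^2(\Sigma))$, whose $\tilde\Phi$\nobreakdash-component restricts to a bounded operator $H^1_{\mathrm{per}}(\Sigma) \to L^2_{\mathrm{per}}(\Sigma)$. Composing with the bounded map $\partial_y\Delta^{-1}: L^2_{\mathrm{per}}(\Sigma) \to H^1_{\mathrm{per}}(\Sigma)$ (again by elliptic regularity on $\Sigma$) produces the desired extension of $\mathrm{d}_1\Gamma[\Phi,\ldots]$ to an operator in $\mathcal{L}(H^1_{\mathrm{per}}(\Sigma))$ depending analytically on its arguments. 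At the origin this extension is the identity, and so it remains an isomorphism in a neighbourhood by a Neumann series, which simultaneously delivers the analytic dependence of the inverse.

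The principal technical point is the elliptic-regularity bound for $\Delta^{-1}$ on the cylindrical domain $\Sigma$ in the case $\beta = 0$, where $\Sigma = (-\infty,0)\times(0,2\pi)$ is semi-infinite: mode-by-mode the Dirichlet problem for $\partial_y^2 - k^2$ on $(-\infty,0)$ admits an obvious decaying solution, but one must assemble the resulting mode-wise bounds uniformly in $k$ to obtain the continuity statements $H^1_{\mathrm{per}}(\Sigma)\to H^3_{\mathrm{per}}(\Sigma)$ and $L^2_{\mathrm{per}}(\Sigma)\to H^2_{\mathrm{per}}(\Sigma)$ used above. Every other step in the proof is a direct consequence of Lemma \ref{Properties of vH and F} and the analytic inverse function theorem.
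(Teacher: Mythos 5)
Your proposal is correct and follows essentially the same route as the paper: part (i) is the implicit/inverse function theorem applied to $\Phi\mapsto\Gamma$ using Lemma \ref{Properties of vH and F}(ii) and $\Delta^{-1}\in{\mathcal L}(H^1_\mathrm{per}(\Sigma),H^3_\mathrm{per}(\Sigma))$ together with the observation that the derivative at the origin is the identity, and part (ii) is the extension via Lemma \ref{Properties of vH and F}(iii) and $\Delta^{-1}\in{\mathcal L}(L^2_\mathrm{per}(\Sigma),H^2_\mathrm{per}(\Sigma))$, with invertibility near the origin obtained by perturbation (your Neumann series is the paper's ``isomorphism is an open property''). Your explicit flagging of the uniform-in-$k$ mode-wise bounds for $\Delta^{-1}$ on the semi-infinite strip when $\beta=0$ is a detail the paper takes for granted.
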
 
\begin{proof}
(i) This result follows by applying the implicit-function theorem to the equation
$$
g(\Phi, \Gamma, (\eta,\rho,\zeta,\xi,\Psi),\mu_1):=
\Gamma-\Gamma(\Phi,(\eta,\rho,\zeta,\xi,\Psi),\mu_1)=0.
$$
Here we note that $g$ maps (a neighbourhood of the origin in) $H_\mathrm{per}^2(\Sigma) \times H_\mathrm{per}^2(\Sigma) \times X_0 \times {\mathbb R}$ into
$H_\mathrm{per}^2(\Sigma)$ (by Lemma \ref{Properties of vH and F}(ii) and the fact that $\Delta^{-1}$ belongs to ${\mathcal L}(H^1_\mathrm{per}(\Sigma),H^3_\mathrm{per}(\Sigma))$),
and that $g(0,0,0,0)=0$ and $\mathrm{d}_1g[0,0,0,0]=-I$.

(ii) It follows from Lemma \ref{Properties of vH and F}(iii) and the fact that $\Delta^{-1}$ belongs to ${\mathcal L}(L^2_\mathrm{per}(\Sigma),H^2_\mathrm{per}(\Sigma))$
that
$\mathrm{d}_1\Gamma[\Phi,(\eta,\rho,\zeta,\xi,\Psi),\mu_1] \in {\mathcal L}(H_\mathrm{per}^2(\Sigma))$ extends to an element $\widetilde{\mathrm{d}_1\Gamma}[\Phi,(\eta,\rho,\zeta,\xi,\Psi),\mu_1]$ of ${\mathcal L}(H_\mathrm{per}^1(\Sigma))$ which
depends analytically upon $\Phi \in V_1$, $(\eta,\rho,\zeta,\xi,\Psi) \in W$ and $\mu_1 \in \Lambda_1$. Obviously $\widetilde{\mathrm{d}_1\Gamma}[0,0,0,0]=I$ is an isomorphism, which is an open property. The analyticity of $\widetilde{\mathrm{d}_1\Gamma}[\cdot]$ implies the analyticity of its inverse.
\end{proof}

It follows from the above lemma that the formula
$$G^{\mu_1}(\eta,\rho,\Phi,\zeta,\xi,\Psi)=(\eta,\rho,\Gamma,\zeta,\xi,\Psi)$$
defines a valid change of variable: it is an analytic diffeomorphism from $U$ to a neighbourhood $\hat{U}$ of the origin in $X$, the
operator $\mathrm{d}G^{\mu_1}[u] \in {\mathcal L}(X)$ extends to an isomorphism $\widetilde{\mathrm{d}G}\hphantom{}^{\mu_1}[u] \in {\mathcal L}(Z)$,
and $G^{\mu_1}$, $\widetilde{\mathrm{d}G}\hphantom{}^{\mu_1}[u]$ and their inverses depend analytically upon $(u,\mu_1) \in U \times \Lambda_1$.
The system \eqref{First Ham formulation}--\eqref{BC alt 2} is transformed into
\begin{equation}\label{Final sd formulation}
\hat{v}_x=\hat{v}_\mathrm{H}^{\mu_1}(\hat{v}),
\end{equation}
where
\begin{equation*}
\hat{v}_\mathrm{H}^{\mu_1}(\hat{v})=\widetilde{\mathrm{d}G}\hphantom{}^{\mu_1}[(G^{\mu_1})^{-1}(\hat{v})](v_\mathrm{H}^{\mu_1}((G^{\mu_1})^{-1}(\hat{v})))
\end{equation*}
with linear boundary conditions \eqref{Linear BC 1}, \eqref{Linear BC 2}. Note also that $G^{\mu_1}$ and $(G^{\mu_1})^{-1}$ both commute with the reverser $R$, so that
\eqref{Final sd formulation} inherits the reversibility of equation \eqref{First Ham formulation}.

Writing $(G^{\mu_1})^{-1}$ as $K^{\mu_1}$, one finds that
the change of variable transforms $(Z,\Omega,H^{\mu_1})$ into the new Hamiltonian system $(Z,\hat{\Omega}^\mu,\hat{H}^\mu)$, where
\begin{equation}
\hat{\Omega}^{\mu_1}|_{\hat{v}}(\hat{v}_1,\hat{v}_2)= \langle \hat{J}^{\mu_1}(\hat{v}) \hat{v}_1,\hat{v}_2 \rangle
\label{Definition of hydro Omega}
\end{equation}
with
\begin{equation*}
\hat{J}^{\mu_1}(\hat{v})=\widetilde{\mathrm{d}K}\hphantom{}^{\mu_1}[\hat{v}]^\ast J\widetilde{\mathrm{d}K}\hphantom{}^{\mu_1}[\hat{v}]
\end{equation*}
and
\begin{equation}
\hat{H}^{\mu_1}(\hat{v})=H^{\mu_1}(K^{\mu_1}(\hat{v}))
\label{Definition of hydro H}
\end{equation}
for $(\mu_1,\hat{v}) \in \Lambda_1 \times \hat{U}$.
In particular 
$$\hat{v}_\mathrm{H}^{\mu_1}(\hat{v})=\hat{J}^{\mu_1}(\hat{v})\nabla \hat{H}^{\mu_1}(\hat{v})$$
for $(\mu_1,\hat{v}) \in \hat{\DD}_\mathrm{H}$, where
$$\hat{\DD}_\mathrm{H} = \{(\mu_1,\hat{v}) \in \Lambda_1 \times \hat{U}: \mbox{\eqref{Linear BC 1}, \eqref{Linear BC 2} are satisfied}\}.$$
Note further that $\hat{\Omega}^{\mu_1}|_{\hat{v}}$ is the exterior derivative of the $1$-form $\hat{\omega}^{\mu_1}|_{\hat{v}}$ given by
$$\hat{\omega}^{\mu_1}|_{\hat{v}}(\hat{w}) = \langle \hat{\alpha}^{\mu_1}(\hat{v}),\hat{w} \rangle$$
with
$$
\hat{\alpha}^{\mu_1}(\hat{v})=\widetilde{\mathrm{d}K}\hphantom{}^{\mu_1}[\hat{v}]^\ast(\alpha(K^{\mu_1}(\hat{v}))).
$$
The validity of these calculations relies upon the existence of the adjoint operator $\widetilde{\mathrm{d}K}\hphantom{}^{\mu_1}[\hat{v}]^\ast$, and this
assumption is verified in the following result.

\begin{proposition}\label{adjoint_proposition}
The adjoint operators
$\widetilde{\mathrm{d}G}\vphantom{}^{\mu_1}[v]^\ast$, $\widetilde{\mathrm{d}K}\vphantom{}^{\mu_1}[\hat{v}]^\ast \in {\mathcal L}(Z)$ exist and
depend analytically upon $({\mu_1},v) \in \Lambda_1 \times U$ and $({\mu_1},\hat{v}) \in \Lambda_1 \times \hat{U}$.
\end{proposition}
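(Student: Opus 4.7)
The strategy is to reduce to an explicit construction via integration by parts. First I would observe that since $G^{\mu_1}(v) = v + (0,0,-\partial_y\Delta^{-1}F^{\mu_1}(v),0,0,0)$ modifies only the $\Phi$-slot, one has $\widetilde{\mathrm{d}G}\vphantom{}^{\mu_1}[v] = I + T^{\mu_1}[v]$ with
\[
T^{\mu_1}[v]\tilde v = (0,0,-\partial_y\Delta^{-1}\widetilde{\mathrm{d}F}\vphantom{}^{\mu_1}[v]\tilde v,0,0,0),
\]
so the problem reduces to exhibiting $T^{\mu_1}[v]^\ast \in {\mathcal L}(Z)$.

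The next step is to compute this adjoint formally. For $\tilde v_1,\tilde v_2\in Z$, the self-adjointness of $\Delta^{-1}$ on $L^2(\Sigma)$ together with $\tilde\Phi_2 \in H^1_\mathrm{per}(\Sigma)$ gives
\[
\langle T^{\mu_1}[v]\tilde v_1,\tilde v_2\rangle
=-\int_\Sigma(\partial_y\Delta^{-1}\widetilde{\mathrm{d}F}\vphantom{}^{\mu_1}[v]\tilde v_1)\tilde\Phi_2\dy\dz
=\int_\Sigma(\widetilde{\mathrm{d}F}\vphantom{}^{\mu_1}[v]\tilde v_1)\,g\dy\dz,
\]
where $g:=\Delta^{-1}\partial_y\tilde\Phi_2 \in H^2(\Sigma)\cap H^1_0(\Sigma)$. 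The candidate formula is thus $T^{\mu_1}[v]^\ast\tilde v_2 = \widetilde{\mathrm{d}F}\vphantom{}^{\mu_1}[v]^\dagger g$, where $\widetilde{\mathrm{d}F}\vphantom{}^{\mu_1}[v]^\dagger$ is the formal $L^2$-adjoint. Using the explicit expression for $\mathrm{d}F^{\mu_1}[v]$ listed just before Proposition \ref{adjoint_proposition}, every term has the shape $a(v;y,z)\cdot D\tilde v$ with $D \in \{I,\partial_y,\partial_z\}$; pairing with $g$, integrating by parts to move $D$ onto $ag$, and using $g|_{\partial\Sigma}=0$ to eliminate the $y$-boundary contributions then yields $T^{\mu_1}[v]^\ast\tilde v_2$ explicitly as a linear combination of weighted $y$-integrals of $g$, $g_y$ and $g_z$ against coefficients built from $v$.

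The main obstacle is to verify that this candidate actually lies in $Z$ with operator-norm bounds depending continuously on $(\mu_1,v)$: each component must be placed in its designated space---$H^2_\mathrm{per}(S)$ for $\eta$, $H^1_\mathrm{per}(S)$ for $\rho$ and $\xi$, $L^2_\mathrm{per}(S)$ for $\zeta$, $H^1_\mathrm{per}(\Sigma)$ for $\Phi$, and $L^2_\mathrm{per}(\Sigma)$ for $\Psi$. This accounting combines the multiplication and integration estimates of Proposition \ref{Analytic stuff} with the extra regularity $g\in H^2(\Sigma)$ supplied by the smoothing of $\Delta^{-1}$, and will require further integration by parts to redistribute derivatives optimally---most delicately in the $\eta$-slot, where the designated regularity $H^2_\mathrm{per}(S)$ is highest and derivatives have to be shifted onto $g$ to exploit its $H^2$-smoothness. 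Analyticity of $(\mu_1,v)\mapsto T^{\mu_1}[v]^\ast$ in ${\mathcal L}(Z)$ is then inherited from the analyticity of $\widetilde{\mathrm{d}F}\vphantom{}^{\mu_1}[v]$ granted by Lemma \ref{Properties of vH and F}(iii).

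Finally $\widetilde{\mathrm{d}K}\vphantom{}^{\mu_1}[\hat v]^\ast$ is obtained from the inverse relation. Differentiating $G^{\mu_1}\circ K^{\mu_1}=I$ gives $\widetilde{\mathrm{d}K}\vphantom{}^{\mu_1}[\hat v]=\widetilde{\mathrm{d}G}\vphantom{}^{\mu_1}[K^{\mu_1}(\hat v)]^{-1}$; since $\widetilde{\mathrm{d}G}\vphantom{}^{\mu_1}[0]=I$, shrinking the neighbourhoods $U$ and $\hat U$ if necessary makes $T^{\mu_1}[v]^\ast$ small in ${\mathcal L}(Z)$, so $\widetilde{\mathrm{d}G}\vphantom{}^{\mu_1}[v]^\ast = I + T^{\mu_1}[v]^\ast$ is invertible by a Neumann series and $\widetilde{\mathrm{d}K}\vphantom{}^{\mu_1}[\hat v]^\ast = (\widetilde{\mathrm{d}G}\vphantom{}^{\mu_1}[K^{\mu_1}(\hat v)]^\ast)^{-1}$; analyticity in $(\mu_1,\hat v)$ then follows from Lemma \ref{change-of-variables} and the analyticity of operator inversion.
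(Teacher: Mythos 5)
Your proposal is correct and follows essentially the same route as the paper: the adjoint of $\widetilde{\mathrm{d}G}\vphantom{}^{\mu_1}[v]$ is computed explicitly by exploiting that $G^{\mu_1}$ perturbs only the $\Phi$-slot, using $(\Delta^{-1})^\ast=\Delta^{-1}$ and integration by parts (with the Dirichlet boundary condition eliminating the boundary terms) to land every term on $\Delta^{-1}(\tilde\Gamma_y)$, with membership of each component in the appropriate slot of $Z$ and analyticity supplied by Proposition \ref{Analytic stuff}, after which $\widetilde{\mathrm{d}K}\vphantom{}^{\mu_1}[\hat v]^\ast$ is obtained as $(\widetilde{\mathrm{d}G}\vphantom{}^{\mu_1}[v]^\ast)^{-1}$. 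The only step you assert rather than verify is that the adjoint of the inverse equals the inverse of the adjoint --- since $\langle\cdot\,,\cdot\rangle$ does not induce the strong topology of $Z$ this is not automatic, but it follows from the same short computation the paper displays, and your Neumann-series argument for invertibility is an equivalent substitute for the paper's openness-of-isomorphisms argument.
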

\begin{proof} The existence of the adjoint $\widetilde{\mathrm{d}G}\vphantom{}^{\mu_1}[v]^\ast \in {\mathcal L}(Z)$ follows by a direct
calculation; its components are given by
\pagebreak
\begin{align*}
(\widetilde{\mathrm{d}G}\hphantom{}^{\mu_1}[v]^\ast(\tilde{v}))_\eta&=\tilde{\eta}+\!\!\int_{-\frac{1}{\beta}}^0\!\!\bigg\{\!\bigg(\!\!-K_2(\eta)^2K_3^2\eta\Phi_y+K_2(\eta)K_3\Phi_y
\\
&\hspace{0.95in}\mbox{} -K_1(\eta)K_2(\eta)K_3\Psi\rho-(\nu_0+\mu_1)\cos(\theta_1-\theta_2) K_1(\eta)K_2(\eta)K_3\eta_z\Psi\\
&\hspace{0.95in}\mbox{}+\frac{K_1(\eta)}{K_2(\eta)}(\nu_0+\mu_1)^2\sin^2(\theta_1-\theta_2)\eta_zK_1(\eta)K_2(\eta)K_3\eta_z\Phi_y\bigg)\Delta^{-1}(\tilde{\Gamma}_y)\\
&\hspace{0.8in}\mbox{} 
-\bigg(\!\bigg((\nu_0+\mu_1)\cos(\theta_1-\theta_2)K_1(\eta)\Psi\\
&\hspace{1.2in}\mbox{} +\frac{K_1(\eta)}{K_2(\eta)}(\nu_0+\mu_1)^2\sin^2(\theta_1-\theta_2)(\Phi_z-K_1(\eta)\eta_z\Phi_y)\\
&\hspace{1.2in}\mbox{} -\frac{K_1^2(\eta)}{K_2(\eta)}(\nu_0+\mu_1)^2\sin^2(\theta_1-\theta_2)\eta_z\Phi_y\\
&\hspace{1.2in}\mbox{} -\frac{K_1(\eta)}{K_2(\eta)}\gamma\mu_1\sin \theta_1\bigg)\Delta^{-1}(\tilde{\Gamma}_y)\bigg)_z\bigg\}\ \dy,\\
(\widetilde{\mathrm{d}G}\hphantom{}^{\mu_1}[v]^\ast(\tilde{v}))_\rho&=\tilde\rho+\int_{-\frac{1}{\beta}}^0K_1(\eta)\Psi\Delta^{-1}(\tilde{\Gamma}_y)\dy,\\
(\widetilde{\mathrm{d}G}\hphantom{}^{\mu_1}[v]^\ast(\tilde{v}))_\Gamma&=\tilde\Gamma-\bigg(\!\bigg(K_2(\eta)K_3\eta-\frac{K_1^2(\eta)}{K_2(\eta)}(\nu_0+\mu_1)^2\sin^2(\theta_1-\theta_2)\eta_z^2\bigg)\Delta^{-1}(\tilde{\Gamma}_y)\bigg)_y,\\
&\qquad\mbox{}-\bigg(\frac{K_1(\eta)}{K_2(\eta)}(\nu_0+\mu_1)^2\sin^2(\theta_1-\theta_2)\eta_z\Delta^{-1}(\tilde{\Gamma}_y)\bigg)_z,\\
(\widetilde{\mathrm{d}G}\hphantom{}^{\mu_1}[v]^\ast(\tilde{v}))_\zeta&=\tilde{\zeta},\\
(\widetilde{\mathrm{d}G}\hphantom{}^{\mu_1}[v]^\ast(\tilde{v}))_\xi&=\tilde{\xi},\\
(\widetilde{\mathrm{d}G}\hphantom{}^{\mu_1}[v]^\ast(\tilde{v}))_\Psi&=\tilde\Psi+\bigg(K_1(\eta)\rho+(\nu_0+\mu_1)\cos(\theta_1-\theta_2)K_1(\eta)\eta_z\bigg)\Delta^{-1}(\tilde{\Gamma}_y),
\end{align*}
from which the analyticity of $\widetilde{\mathrm{d}G}\hphantom{}^{\mu_1}[v]$ also follows by Proposition \ref{Analytic stuff}. (Note that the formula 
$(\Delta^{-1})^\ast=\Delta^{-1}$ has been used in this calculation.)

Observe that $\widetilde{\mathrm{d}G}\hphantom{}^{\mu_1}[v]^\ast$ is an isomorphism because $\widetilde{\mathrm{d}G}\hphantom{}^0[0]^\ast=I$ is obviously an isomorphism, which is an open property. Furthermore the analytic dependence of $\widetilde{\mathrm{d}G}\hphantom{}^{\mu_1}[v]^\ast$ upon $(\mu_1,v) \in \Lambda_1 \times U$ implies the same of its inverse, and the calculation
\begin{align*}
\langle \widetilde{\mathrm{d}G}\hphantom{}^{\mu_1}[v]^{-1} (v_1), v_2 \rangle &= \langle \widetilde{\mathrm{d}G}\hphantom{}^{\mu_1}[v]^{-1} v_1, \widetilde{\mathrm{d}G}\hphantom{}^{\mu_1}[v]^\ast (\widetilde{\mathrm{d}G}\hphantom{}^{\mu_1}[v]^\ast)^{-1} (v_2) \rangle \\
& = \langle \widetilde{\mathrm{d}G}\hphantom{}^{\mu_1}[v]\widetilde{\mathrm{d}G}\hphantom{}^{\mu_1}[v]^{-1} (v_1), (\widetilde{\mathrm{d}G}\hphantom{}^{\mu_1}[v]^\ast)^{-1} (v_2) \rangle \\
& = \langle v_1, (\widetilde{\mathrm{d}G}\hphantom{}^{\mu_1}[v]^\ast)^{-1} (v_2) \rangle
\end{align*}
shows that $(\widetilde{\mathrm{d}G}\hphantom{}^{\mu_1}[v]^{-1})^\ast$ exists and equals $(\widetilde{\mathrm{d}G}\hphantom{}^{\mu_1}[v]^\ast)^{-1}$. The proof is completed by
noting that $\widetilde{\mathrm{d}K}\hphantom{}^{\mu_1}[\hat{v}]=\widetilde{\mathrm{d}G}\hphantom{}^{\mu_1}[v]^{-1}$ with $v=(G^{\mu_1})^{-1}(\hat{v})$.
\end{proof}

\section{Application of Lyapunov centre theory to hydroelastic waves} \label{Application}

In this section we apply Theorem \ref{LCT} to the spatial dynamics formulation
$$
\hat{v}_x=\hat{v}_\mathrm{H}^{\mu_1}(\hat{v})
$$
for hydroelastic waves derived in Section \ref{Formulation}. For this purpose we define
\begin{align*}
X & \!=\!\{\hat{v}\!=\!(\eta,\rho,\Gamma,\zeta,\xi,\Psi)\!\in\!\ H_\mathrm{per}^3(S)\times H_\mathrm{per}^2(S)\times H_\mathrm{per}^2(\Sigma)\times H_\mathrm{per}^1(S)\times H_\mathrm{per}^2(S)\times H_\mathrm{per}^1(\Sigma):\\
& \hspace{2in}\Gamma_y\big|_{y=-\frac{1}{\beta}} = 0,\ \Gamma_y\big|_{y=0} + \gamma(\rho\sin\theta_2 + \nu_0\eta_z \sin\theta_1) = 0\}, \\
Z & \!=\!\{v\!=\!(\eta,\rho,\Gamma,\zeta,\xi,\Psi)\!\in\!\ H_\mathrm{per}^2(S)\times H_\mathrm{per}^1(S)\times H_\mathrm{per}^1(\Sigma)\times L_\mathrm{per}^2(S)\times H_\mathrm{per}^1(S)\times L_\mathrm{per}^2(\Sigma)\}
\end{align*}
(note the modification to the space $X$)
and consider $\hat{v}_\mathrm{H}^{\mu_1}$ as the Hamiltonian vector field for the Hamiltonian system $(Z, \hat{\Omega}^{\mu_1}, \hat{H}^{\mu_1})$, where $\hat{\Omega}^{\mu_1}$ and $\hat{H}^{\mu_1}$ are defined in equations \eqref{Definition of hydro Omega}
and \eqref{Definition of hydro H} and $\DD_H=\Lambda_1 \times \hat{U}$ is a neighbourhood of the origin in ${\mathbb R} \times X$.
Defining
$L^{\mu_1}=\mathrm{d}\hat{v}_\mathrm{H}^{\mu_1}[0]$ and $N^{\mu_1}(\hat{v})=\hat{v}_\mathrm{H}^{\mu_1}(\hat{v})-L^{\mu_1}\hat{v}$,
one can write Hamilton's equations as
\begin{equation}
\hat{v}_x = L^{\mu_1}\hat{v}+N^{\mu_1}(\hat{v}), \label{Apply theorem to this}
\end{equation}
where
in particular $L:=L^0$ is given by the explicit formula
$$
L\begin{pmatrix}
\eta \\ \rho \\ \Gamma \\ \zeta \\ \xi \\ \Psi
\end{pmatrix}
=
\begin{pmatrix}
\rho\\[1mm]
\xi - \nu_0^2\eta_{zz} - 2\nu_0 \cos (\theta_1-\theta_2)\rho_z \\[1mm]
\Psi - \nu_0\cos(\theta_1-\theta_2) \Gamma_z \\[1mm]
\nu_0^2\xi_{zz} +  \eta - \gamma\nu_0\sin \theta_1 \Gamma_z|_{y=0} \\[1mm]
-\zeta+\gamma \sin\theta_2 \Gamma|_{y=0} - 2 \nu_0\cos(\theta_1-\theta_2)\xi_z \\[1mm]
-\Gamma_{yy} - \nu_0^2\sin^2(\theta_1-\theta_2)\Gamma_{zz}-\nu_0\cos(\theta_1-\theta_2)\Psi_z
\end{pmatrix},
$$
which is readily calculated from \eqref{Ham eq 1}--\eqref{Ham eq 6} since the change of variable used to linearise the boundary condition is near-identity.
Equation \eqref{Apply theorem to this} evidently satisfies hypothesis (H1).

Furthermore, the reverser $R$ clearly satisfies $R^\ast=R$ and 
$$H^{\mu_1}(Rv) = H^{\mu_1}(v), \quad R^\ast\alpha^{\mu_1}(Rv)= -\alpha^{\mu_1}(v), \quad R^\ast J^{\mu_1}(Rv)R =-J^{\mu_1}(v)$$
for all $(\mu_1,v) \in \Lambda_1 \times U$; since $R$ commutes with $G^{\mu_1}$ and $K^{\mu_1}$ we conclude that
$$\hat{H}^{\mu_1}(R\hat{v}) = \hat{H}^{\mu_1}(\hat{v}), \quad R^\ast\hat{\alpha}^{\mu_1}(R\hat{v})= -\hat{\alpha}^{\mu_1}(\hat{v}), \quad R^\ast \hat{J}^{\mu_1}(R\hat{v})R =-\hat{J}^{\mu_1}(\hat{v})$$
for all $(\mu_1,\hat{v}) \in \Lambda_1 \times \hat{U}$. Hypothesis (H2) is therefore also satisfied.

\subsection{Purely imaginary spectrum} \label{pis}
The next step is to examine the purely imaginary spectrum of the linear operator $L$. This task is readily accomplished by
using Fourier-series representations
$$\hat{v}(y,z) = \sum_{k \in {\mathbb Z}} \hat{v}_k(y) \ee^{\ii kz}, \qquad \hat{v}^\star(y,z) = \sum_{k \in {\mathbb Z}} \hat{v}_k^\star(y) \ee^{\ii kz}$$
for $\hat{v} \in X$ and $\hat{v}^\star \in Z$ and examining the resulting decoupled spectral problems for each Fourier mode.
We begin with the following lemma, which is proved by well-established methods for spatial dynamics problems
(see Groves \& Haragus
\cite{GrovesHaragus03}, Bagri \& Groves \cite{BagriGroves15} and the references therein).

\begin{lemma}\label{lemma_spectrum}
\hspace{1in}
\begin{list}{(\roman{count})}{\usecounter{count}}
\item
Suppose that $s \in {\mathbb R}$ and $k \in {\mathbb Z}$ are not both zero. The imaginary number $\ii s$ is a mode $k$ eigenvalue of $L$ if and only if
\begin{equation}
(1+\sigma_k^4)\sigma_k - \frac{\gamma^2(k\nu_0 \sin \theta_1+s \sin \theta_2)^2}{\tanh(\beta^{-1}\sigma_k)} =0,
\label{Imaginary eigenvalue equation}
\end{equation}
where
$$\sigma_k^2 = s^2 + 2 k \nu_0 s \cos(\theta_1-\theta_2)+k^2 \nu_0^2 >0,$$
and in this case the corresponding eigenvector is $\hat{v}_{k,s} \ee^{\ii k z}$, where
\small
\begin{equation}
\hat{v}_{k,s}=\begin{pmatrix}
\dfrac{ \ii\gamma b_k}{1 +\sigma_k^4} \\[5mm]
-\dfrac{ s\gamma b_k}{1 +\sigma_k^4} \\[5mm]
\frac{1}{2} \ee^{-\sigma_k y} (1-t_k)
 +\frac{1}{2} \ee^{\sigma_k y}(1+t_k)\\[5mm]
 \gamma\sin \theta_2-\dfrac{\gamma \sigma_k^2 a_{2k} b_k}{1 +\sigma_k^4}\\[5mm]
-\dfrac{\ii \gamma\sigma_k^2 b_k}{1 +\sigma_k^4} \\[5mm]
\frac{1}{2} \ii a_k
\Big(\ee^{-\sigma_k y} (1-t_k)
+\ee^{\sigma_k y}(1+t_k)\Big)
\end{pmatrix} \label{evec}
\end{equation}
\normalsize
and
$$t_k=\tanh(\beta^{-1}\sigma_k), \quad a_k = s+k \nu_0 \cos(\theta_1-\theta_2), \quad
b_k =k \nu_0  \sin \theta_1+s \sin \theta_2>0.$$
This eigenvalue has a Jordan chain of length at least $2$ with generalised eigenvector $\hat{w}_{k,s} \ee^{\ii k z}$, where
\small
$$\hat{w}_{k,s}=\begin{pmatrix}
\dfrac{\gamma\sin\theta_2}{1+\sigma_k^4}-2\gamma a_kb_k\left(\dfrac{2\sigma_k^2}{(1+\sigma_k^4)^2}+\dfrac{c_k}{2\sigma_k^2(1+\sigma_k^4)}\right) \\[5mm]
\dfrac{\ii\gamma b_k}{1+\sigma_k^4}+\dfrac{\ii s \gamma\sin \theta_2}{1+\sigma_k^4}-2\ii \gamma sa_kb_k\left(\dfrac{2\sigma_k^2}{(1+\sigma_k^4)^2}+\dfrac{c_k}{2\sigma_k^2(1+\sigma_k^4)}\right) \\[5mm]
\dfrac{\ii a_k}{2\sigma_k}\big((1-t_k)y \ee^{-\sigma_k y}-(1+t_k)y \ee^{\sigma_k y}\big)+\dfrac{\ii c_ka_k}{2\sigma_k^2}(\ee^{-\sigma_k y}+\ee^{\sigma_k y})\\[5mm]
\ii\left(\dfrac{c_ka_k}{\sigma_k^2}+ \dfrac{\sigma_k^2a_{2k}}{1+\sigma_k^4}\right)\gamma\sin\theta_2 + \dfrac{\ii\gamma  \sigma_k^2b_k}{1+\sigma_k^4}
-2\ii\gamma   a_k a_{2k} b_k \left(\dfrac{2\sigma_k^4}{(1+\sigma_k^4)^2}+\dfrac{c_k-2}{2(1+\sigma_k^4)}\right) \\[5mm]
-\dfrac{\sigma_k^2}{1+\sigma_k^4}\gamma\sin \theta_2 +2 \gamma a_kb_k \left(\dfrac{2\sigma_k^4}{(1+\sigma_k^4)^2}
+\dfrac{c_k-2}{2(1+\sigma_k^4)}\right)\\[5mm]
\left(\frac{1}{2}(1-t_k)-\dfrac{c_ka_k^2}{2\sigma_k^2}-\dfrac{(1-t_k)a_k^2}{2\sigma_k}y\right)\ee^{-\sigma_k y}+\left(\frac{1}{2}(1+t_k)-\dfrac{c_ka_k^2}{2\sigma_k^2}+\dfrac{(1+t_k)a_k^2y}{2\sigma_k}\right)\ee^{\sigma_k y}
\end{pmatrix}
$$
\normalsize
and
$$c_k=2\beta^{-1}\sigma_k \cosech (2\beta^{-1}\sigma_k),$$
if either
\begin{itemize}
\item[(a)] $\beta>0$, $s=a\beta$, $\nu_0 = \tilde{\nu}_0 \beta$, $(5+\tilde c_k)\tilde a_k\tilde b_k-2\sin\theta_2\tilde\sigma_k^2\neq 0$ and $(\beta,\gamma)$ lies on a point of the curve
$$C_k = \{(\beta_k(a),\gamma_k(a)): a \in (0,\infty)\},$$
where
\begin{align*}
\beta^4_k(a) &= \frac{1}{\tilde\sigma_k^4}\cdot \frac{2\sin\theta_2\tilde\sigma_k^2-(1+\tilde c_k)\tilde a_k\tilde b_k}{(5+\tilde c_k)\tilde a_k\tilde b_k-2\sin\theta_2\tilde\sigma_k^2}, \\
\gamma^2_k(a) & = \frac{(1+\beta_k^4(a)\tilde\sigma_k^4)\tilde\sigma_k\tanh(\tilde\sigma_k)}{\beta_k(a)\tilde b_k^2}
\end{align*}
and
\begin{align*}
	\tilde{\sigma}_k^2 &= a^2 + 2 k \tilde{\nu}_0 a \cos(\theta_1-\theta_2)+k^2 \tilde{\nu}_0^2, \\
	\tilde a_k &= a+k\tilde\nu_0\cos(\theta_1-\theta_2), \\
	\tilde b_k &= a\sin\theta_2+k\tilde\nu_0\sin\theta_1, \\
	\tilde c_k &= 2\tilde\sigma_k \cosech(2\tilde \sigma_k);
\end{align*}
\item[(b)]
 $\beta>0$, $s=a\beta$, $\nu_0 = \tilde{\nu}_0 \beta$ and
$(5+\tilde c_k)\tilde a_k\tilde b_k-2\sin\theta_2\tilde\sigma_k^2=0$, which implies that $\theta_2=0$ and $a=-k\tilde{\nu_0}\cos(\theta_1)$;
\item[(c)]
$\beta=0$ and
$$2\sin\theta_2-\frac{4\sigma_k^2a_kb_k}{1+\sigma_k^4}=\frac{a_kb_k}{\sigma_k^2}.$$
\end{itemize}
\item
Suppose $\beta>0$. Zero is a mode $0$ eigenvalue of $L$ with a Jordan chain of length 2 if $\gamma^{-2} \neq \beta \sin^2 \theta_2$ and length $4$ if $\gamma^{-2} = \beta \sin^2 \theta_2$; the generalised eigenvectors are
$$\hat{f}_1=\begin{pmatrix} 0 \\ 0 \\ 1 \\\gamma\sin \theta_2 \\ 0 \\ 0 \end{pmatrix},
\ \hat{f}_2=\begin{pmatrix} \gamma\sin \theta_2 \\ 0 \\ 0 \\ 0 \\ 0 \\ 1 \end{pmatrix},
\ \hat{f}_3=\begin{pmatrix} 0 \\ \gamma\sin \theta_2 \\ -\frac{1}{2}y^2-\beta^{-1}y \\ 0 \\ 0 \\ 0 \end{pmatrix},
\ \hat{f}_4=\begin{pmatrix} 0 \\ 0 \\ 0 \\ 0 \\ \gamma\sin \theta_2 \\ -\frac{1}{2}y^2-\beta^{-1}y \end{pmatrix},
$$
where $L\hat{f}_1=0$, $L\hat{f}_2=\hat{f}_1$ and $L\hat{f}_3=\hat{f}_2$, $L\hat{f}_4=\hat{f}_3$ if $\gamma^{-2} = \beta \sin^2 \theta_2$.
\item
Suppose $\beta=0$ and that $s=0$ does not solve \eqref{Imaginary eigenvalue equation} for any $k \in {\mathbb Z} \setminus \{0\}$ (so that zero is not a
mode $k$ eigenvalue for any $k \in {\mathbb Z} \setminus \{0\}$). Zero is not a mode $0$ eigenvalue of $L$, which instead has essential spectrum at the origin. More precisely,
the equation $L\hat{v}=\hat{v}^\star$ has a unique solution for each $\hat{v}^\star \in Z$ which satisfies the regularity requirement
\begin{equation*}
\int_{-\infty}^y \int_{-\infty}^t \Psi_0^\star(s) \ds\dt, \ \int_{-\infty}^y \Psi_0^\star(t)\dt \in L^2(-\infty,0)
\end{equation*}
and compatibility condition
\begin{equation*}
\int_{-\infty}^0 \Psi_0^\star(t)\dt - \gamma\sin \theta_2 \eta_0^\star=0.
\end{equation*}
This solution satisfies the estimate
$$\|\hat{v}-\bl \hat{v} \br_0\|_X \lesssim \|\hat{v}^\star\|_Z$$
and its $0$th Fourier component is given by the formula
$$
\begin{pmatrix}
\hat{\eta}_0 \\ \hat{\rho}_0 \\ \hat{\Gamma}_0 \\ \hat{\zeta}_0 \\ \hat{\xi}_0 \\ \hat{\Psi}_0
\end{pmatrix}
=\begin{pmatrix}
\zeta_0^\star \\ \eta_0^\star \\ -\int_{-\infty}^y \int_{-\infty}^t \Psi_0^\star(s) \ds\dt \\ -\xi_0^\star-\gamma\sin\theta_2\int_{-\infty}^0 \int_{-\infty}^t \Psi_0^\star(s) \ds\dt \\ \rho_0^\star \\ \Gamma_0^\star
\end{pmatrix}.
$$
\item
Suppose that $s \in {\mathbb R} \setminus \{0\}$ does not satisfy \eqref{Imaginary eigenvalue equation} for any $k \in {\mathbb Z}$. The imaginary number $\ii s$ belongs to the resolvent set of $L$.
\item
The resolvent estimates
$$\|(\ii s I-L)^{-1}\|_{Z \rightarrow X} \lesssim 1, \qquad \|(\ii s I-L)^{-1}\|_{Z \rightarrow Z} \lesssim \frac{1}{|s|}$$
hold uniformly over all sufficiently large values of $|s|$.
\end{list}

\end{lemma}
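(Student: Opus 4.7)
The plan is to exploit the Fourier decomposition already introduced before the lemma: because every coefficient in $L$ is either constant or a differential operator in $z$ with constant coefficient, the operator preserves the mode-$k$ subspace for each $k\in\mathbb{Z}$, and the full spectral problem decouples into a family of problems for the restrictions $L_k$. In each mode $L_k$ acts on a six-tuple $(\eta_k,\rho_k,\Gamma_k(y),\zeta_k,\xi_k,\Psi_k(y))$ in which $\eta_k,\rho_k,\zeta_k,\xi_k\in\C$ are scalars and $\Gamma_k,\Psi_k$ are functions of $y$, with the boundary conditions $(\Gamma_k)_y|_{y=-1/\beta}=0$ and $(\Gamma_k)_y|_{y=0}+\gamma(\rho_k\sin\theta_2+\ii k\nu_0\eta_k\sin\theta_1)=0$ inherited from those defining $X$.

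For part (i) I would solve the eigenvalue problem $L_k\hat v_k=\ii s\hat v_k$ as follows: the $\eta$-, $\rho$-, $\zeta$- and $\xi$-components yield four scalar linear equations, which can be solved for $\eta_k,\rho_k,\zeta_k,\xi_k$ in terms of $\Gamma_k|_{y=0}$ and $\Psi_k|_{y=0}$, while the $\Gamma$- and $\Psi$-components reduce to a second-order constant-coefficient ODE for $\Gamma_k$ on $(-1/\beta,0)$ whose characteristic roots are $\pm\sigma_k$. The general solution $\Gamma_k=A\ee^{\sigma_k y}+B\ee^{-\sigma_k y}$ is then inserted into the two boundary conditions. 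After eliminating the algebraic unknowns, one obtains a homogeneous $2\times 2$ system whose determinant vanishes precisely when \eqref{Imaginary eigenvalue equation} holds; normalising the resulting kernel vector gives the explicit eigenvector $\hat v_{k,s}$ in \eqref{evec}. The Jordan-chain computation proceeds identically for the inhomogeneous equation $(L_k-\ii s I)\hat w_k=\hat v_{k,s}$: the same $2\times 2$ system arises but with an inhomogeneous right-hand side, and solvability requires the Fredholm condition obtained by testing against the left kernel. Working this condition out yields precisely the trichotomy in cases (a), (b) and (c); case (b) is the degenerate subcase in which the denominator in (a) vanishes, which forces $\theta_2=0$ by inspection of $\tilde b_k$, and case (c) is the $\beta\to 0$ limit in which $t_k=1$.

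For part (ii) the mode $k=0$ equations collapse dramatically since every $\partial_z$ term disappears: the kernel equation reduces to $(\Gamma_0)_{yy}=0$ with homogeneous Neumann conditions at $y=0$ and $y=-1/\beta$, giving a one-dimensional kernel spanned by $\hat f_1$. Solving $L\hat f_2=\hat f_1$ explicitly is a short integration, as is the next step $L\hat f_3=\hat f_2$; the existence of $\hat f_3$ in $X$ requires the modified boundary condition at $y=0$ to hold, and a direct check shows that this collapses to $\gamma^{-2}=\beta\sin^2\theta_2$. A further iteration produces $\hat f_4$ under the same condition. For part (iii) with $\beta=0$ the mode-$0$ slab is the half-line $y\in(-\infty,0)$; I would solve $L_0\hat v_0=\hat v_0^\star$ by direct integration in $y$, treating the decay at $y=-\infty$ as the replacement for the Neumann condition at $y=-1/\beta$. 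The iterated integrals of $\Psi_0^\star$ carry the only obstruction to $X$-regularity and to satisfying the boundary condition at $y=0$; imposing $L^2$-integrability of the first two antiderivatives and matching the top boundary condition produces the compatibility condition stated.

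Parts (iv) and (v) follow from the same Fourier-mode analysis applied to the inhomogeneous problem $(\ii sI-L_k)\hat v_k=\hat v_k^\star$. Whenever $\ii s$ does not satisfy \eqref{Imaginary eigenvalue equation} for any $k\in\Z$, the $2\times 2$ boundary system in each mode is nonsingular and hence invertible, so the resolvent exists mode-by-mode and is combined by Parseval. The uniform estimates in (v) come from the asymptotic behaviour of the explicit solution formulas as $|k|+|s|\to\infty$: $\sigma_k$ is comparable to $|k|+|s|$, the exponentials $\ee^{\pm\sigma_k y}$ produce the expected smoothing in $y$, and the determinant in the boundary system is bounded below by a positive multiple of $\sigma_k^5$, which suffices to obtain the claimed $Z\to X$ and $Z\to Z$ bounds by Plancherel. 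I expect the main obstacle to lie in part (i), specifically in algebraically simplifying the Fredholm condition for the Jordan chain into the compact form stated in cases (a)–(c), and in part (iii), where verifying that the candidate solution indeed lies in $X$ with the claimed estimate requires care in handling the non-decaying $0$th component $\bl\hat v\br_0$ and the interplay between the compatibility condition and the regularity of the iterated integrals of $\Psi_0^\star$.
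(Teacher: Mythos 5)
Your plan is correct and coincides with what the paper actually does: the paper gives no written proof of this lemma, stating only that it follows "by well-established methods for spatial dynamics problems" with references to Groves \& Haragus and Bagri \& Groves, and those methods are precisely the Fourier-mode decomposition, the reduction of the $(\Gamma,\Psi)$-components to the ODE $(\Gamma_k)_{yy}=\sigma_k^2\Gamma_k$, the boundary-determinant computation yielding the dispersion relation, and the Fredholm solvability condition for the Jordan chain that you describe. Your identification of the likely sticking points (the algebraic reduction of the tangency condition to cases (a)--(c), and the regularity/compatibility analysis of the iterated integrals of $\Psi_0^\star$ in part (iii)) is accurate, and spot checks of your intermediate claims (e.g.\ that the characteristic exponent is $\sigma_k$, that the $y=0$ boundary condition for $\hat f_3$ forces $\gamma^{-2}=\beta\sin^2\theta_2$, and that the compatibility condition in (iii) is the $y=0$ boundary condition) all confirm them.
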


We proceed by interpreting equation \eqref{Imaginary eigenvalue equation} geometrically. Let
\begin{align}
\ell_1&=s\sin \theta_2+\nu_0k\sin \theta_1,\label{l1eq}\\
\ell_2&=-s\cos\theta_2 -\nu_0 k \cos\theta_1,\label{l2eq}
\end{align}
and note that
$\ell_1^2+\ell_2^2=\sigma_k^2$, so that \eqref{Imaginary eigenvalue equation} can be written as
\begin{equation}\label{disprelgeo}
\DD(\ell_1,\ell_2):=(1+(\ell_1^2+\ell_2^2)^2)\sqrt{\ell_1^2+\ell_2^2}\tanh\left( \beta^{-1}\sqrt{\ell_1^2+\ell_2^2}\right)-\gamma^2\ell_1^2=0.
\end{equation}
A mode $k$ purely imaginary eigenvalue $\ii s$ (with $(k,s) \neq (0,0)$) therefore corresponds to an intersection in the $(\ell_1,\ell_2)$-plane of
the dispersion curve
$$C_\mathrm{dr}=\{(\ell_1,\ell_2) \in {\mathbb R}^2 \setminus \{(0,0)\}: \DD(\ell_1,\ell_2)=0\}$$
with the straight line $S_k$ defined by equations \eqref{l1eq}, \eqref{l2eq}. (The solution $(\ell_1,\ell_2)=(0,0)$ of $\DD(\ell_1,\ell_2)=0$ is excluded since it corresponds to $(k,s)=(0,0)$.)

The dispersion curve $C_\mathrm{dr}$ is described parametrically by
\begin{equation*}
C_\mathrm{dr}:=\Big\{(\ell_1,\ell_2)\in\mathbb{R}^2:\ell_1^2=\frac{(1+a^4)a}{\gamma^2}\tanh(\beta^{-1} a),\ \ell_2^2=a^2-\frac{(1+a^4)a}{\gamma^2}\tanh(\beta^{-1}a),\ a>0\Big\};
\end{equation*}
its shape is shown in Figure~\ref{Intersections} (a, insets) in the 
indicated regions of the
$(\beta,\gamma)$-parameter plane. The delimiting curves are
 $$D_1=\{(\beta_0(a),\gamma_0(a))\big|_{k=0,\, \theta_2=\frac{\pi}{2}}: a \in (0,\infty)\},$$
at each point of which
the equation ${\mathcal D}(a\beta,0)=0$ has double roots $\pm a$, and
$$D_2=\{(\beta,\beta^{-1/2}): \beta \geq 0\},$$
at each point of which the equation ${\mathcal D}(\ell_1,0)=0$ has a double zero root.
We find that $C_\mathrm{dr}=\emptyset$ in the region below the curve $D_1$.
In the region between the curves $D_1$ and $D_2$ the equation ${\mathcal D}(\ell_1,0)=0$ has two pairs of simple nonzero
roots $\pm \ell_1^{(1)}$, $\pm \ell_1^{(2)}$; the branches of $C_\mathrm{dr}$ intersect the $\ell_1$ axis
vertically at the points $(\pm \ell_1^{(1)},0)$ and $(\pm \ell_1^{(2)},0)$. Notice the qualitative difference in the shape of $C_\mathrm{dr}$
in the two subregions. In the upper subregion the portion $C_\mathrm{dr}^+$ of $C_\mathrm{dr}$ in the
positive quadrant has two points of inflection (it is concave to the left of the first
and to the right of the second and convex in between); passing into the lower subregion, one finds that the
two points of inflection merge and disappear, so that $C_\mathrm{dr}^+$ becomes concave.
The points $(\pm \ell_1^{(1)},0)$ approach the origin as one passes through $D_2$ from below to above, as does the left point of
inflection on $C_\mathrm{dr}^+$. In the region above $D_2$ the branches of $C_\mathrm{dr}$ intersect the $\ell_1$
axis vertically at the points $(\pm \ell_1^{(2)},0)$, while
in a neighbourhood of $(0,0)$ they have the limiting behaviour
$$
\ell_2^2\sim \left(\gamma^2\beta-1\right)\ell_1^2
$$
as $\ell_1 \to 0$ and therefore make angles
$\pm \arctan\sqrt{\gamma^2\beta-1}$ with the $\ell_1$ axis at the origin. The subcurve $C_\mathrm{dr}^+$ has a single
point of inflection, to the left and right of which it is respectively convex and concave. 

For given $\nu$, $\theta_1$ and $\theta_2$ the lines
$S_k$ in the $(\ell_1,\ell_2)$-plane are parallel,
equidistant and form an angle $\theta_2$ with
the positive $\ell_2$-axis. They intersect the line
\[T=\{(\ell_1,\ell_2)\in {\mathbb R}^2\;:\;
\ell_1=\sin\theta_1\,a,\
\ell_2=-\cos\theta_1\,a,\ a\in\R\}\]
(which passes through the origin and makes an angle $\theta_1$ with the positive
$\ell$-axis) at the points $P_k=(\sin\theta_1\,k\nu_0,
-\cos\theta_1\,k\nu_0)$, $k\in\Z$  (see Figure~\ref{Intersections}(b)).
The number of points in the set $S_0\cap C_\mathrm{dr}$ depends only upon
$(\beta,\gamma)$, which determines the shape of $C_\mathrm{dr}$, and $\theta_2$, which determines
the slope of each line $S_k$. Furthermore, for fixed $\beta$, $\gamma$ and $\theta_2$
the number of points in the sets $S_k\cap C_\mathrm{dr}$, $k=\pm1,\pm2,\dots$
depends only upon $\nu_0$, which determines the distance between the lines $S_k$.
At each fixed point of the $(\beta,\gamma)$-parameter plane the number of purely
imaginary eigenvalues of the linear operator $L$ therefore depends
upon the two parameters $\theta_2$ and $\nu_0$; the third parameter
$\theta_1$, which specifies the slope of the line $T$, influences only the
values of these eigenvalues and their relative positions
on the imaginary axis: the imaginary part of a purely imaginary eigenvalue corresponding
to an intersection of $S_k$ and $C_\mathrm{dr}$ is the value of $S_0$ in the $(S_0,T)$-coordinate
system at the intersection (the signed distance between the intersection and the point $P_n$).
The geometric multiplicity of the eigenvalue \(\ii s\) is given by the number of distinct lines in the family \(S_k\) that intersect 
\(C_{\text{dr}}\) at this parameter value, and a tangent intersection between \(S_k\) and \(C_{\text{dr}}\) indicates that each eigenvector in mode \(k\) has an associated Jordan chain of length at least \(2\).
Finally, notice that the sets
$S_k \cap C_\mathrm{dr}$ and $S_{-k} \cap C_\mathrm{dr}$ have the same cardinality:
the purely imaginary number $\ii s$ is a mode $k$ eigenvalue
if and only if the purely imaginary number $-\ii s$ is a mode $-k$ eigenvalue.

\begin{figure}[h!]\centering
\includegraphics{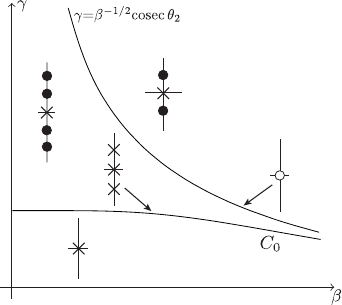}
\caption{Mode $0$ eigenvalues in the $(\beta,\gamma)$-parameter plane. Solid dots, crosses and hollow dots represent
eigenvalues with Jordan chains of length $1$, $2$ and $4$; the zero eigenvalue becomes essential spectrum at $\beta=0$.}
\label{bifdiagram}
\end{figure}

Figure \ref{bifdiagram} illustrates how the purely imaginary mode $0$ eigenvalues depend upon $(\beta,\gamma)$,
nonzero pairs $\pm \ii s$ of which satisfy
$$
(1+s^4)|s|-\frac{\gamma^2\sin^2 \theta_2 s^2}{\tanh(\beta^{-1}|s|)}=0.
$$
The delimiting curves are $C_0$ and $\{(\beta,\beta^{-1/2} \cosec \theta_2): \beta>0\}$,
points of which are associated with respectively non-zero eigenvalues $\pm \ii a \beta_0(a)$ with a Jordan chain of length $2$
and a zero eigenvalue with a Jordan chain of length $4$.

\subsection{Parameter selection}

We now choose $\beta$, $\gamma$ and $\theta_2$ such that $S_0$ does not intersect $C_\mathrm{dr}$ (so that $(\beta,\gamma)$ lies in the region
above the curve $C_0$ in Figure \ref{bifdiagram}), and $\nu_0$ and $\theta_1$ such that $S_1$ and $S_{-1}$ each intersect $C_\mathrm{dr}$
in points with coordinates $(\pm s,\nu_0)$ and $(\pm s,-\nu_0)$ in the $(S_0,T)$-coordinate system, while $S_k$ does not intersect 
$C_\mathrm{dr}$ for $k = \pm 2$, $\pm 3$, \ldots (see Figure \ref{scenario}). In this configuration
$L$ has two mode $1$ eigenvalues $\pm \ii s$,
two mode $-1$ eigenvalues $\pm \ii s$, so that $\pm \ii s$ are geometrically and algebraically double eigenvalues of $L$ with eigenvectors
$\hat{v}_{1,s}\ee^{\ii z}$, $\hat{v}_{-1,s}\ee^{-\ii z}$ and $\hat{v}_{1,-s}\ee^{\ii z}$, $\hat{v}_{-1,-s}\ee^{-\ii z}$, and, for $\beta>0$, a mode $0$ zero eigenvalue with a Jordan chain
$\hat{f}_1$, $\hat{f}_2$ of length $2$. Note that
$L^{\mu_1}$ has two mode $1$ eigenvalues $\ii s_1^{\mu_1}$, $-\ii s_{-1}^{\mu_1}$ and
two mode $-1$ eigenvalues $\ii s_{-1}^{\mu_1}$, $-\ii s_1^{\mu_1}$ which satisfy
\begin{equation}
g(s_{\pm 1}^{\mu_1},\nu_0+\mu_1,\pm 1)=0,
\label{ievals with mu}
\end{equation}
where
\begin{align*}
g(t,\nu,\pm 1)&=(1+(\sigma_{\pm 1})^4)\sigma_{\pm 1} - \frac{\gamma^2(\pm\nu^2 \sin \theta_1+t \sin \theta_2)^2}{\tanh(\beta^{-1}\sigma_{\pm 1})}, \\
(\sigma_{\pm 1})^2 &= t^2 \pm 2\nu t \cos(\theta_1-\theta_2)+\nu^2
\end{align*}
and $s_{\pm 1}^0=s$ (the corresponding eigenvectors are given by $\mathrm{d}G[0](v_{1,\pm s_{\pm}^{\mu_1}})$ and
$\mathrm{d}G[0](v_{-1,\pm s_{\pm}^{\mu_1}})$, where $v_{1,\pm s_{\pm}^{\mu_1}}$, $v_{-1,\pm s_{\pm}^{\mu_1}}$ are defined by the right-hand side of equation \eqref{evec} with $\nu_0$ replaced by $\nu_0+\mu_1$). Differentiating \eqref{ievals with mu} with respect to $\mu_1$, we obtain the formulae
\begin{equation*}
\frac{\mathrm{d}}{\mathrm{d}\mu_1}s_{\pm 1}^{\mu_1}=-\frac{g_{\nu}(s_{\pm1}^\mu,\nu_0+\mu,\pm1)}{g_t(s_{\pm 1}^\mu,\nu_0+\mu,\pm 1)}
\end{equation*}
which imply that
\begin{align*}
\frac{\mathrm{d}}{\mathrm{d}\mu}(s_{1}^{\mu_1}-s_{-1}^{\mu_1})\vert_{\mu_1=0}&=-\frac{g_{\nu}(s,\nu_0,1)}{g_t(s,\nu_0,1)}+\frac{g_{\nu}(s,\nu_0,-1)}{g_t(s,\nu_0,-1)}\\
&=\frac{1}{g_t(s,\nu_0,1)g_t(s,\nu_0,-1)}\left\vert\begin{array}{cc}
g_t(s,\nu_0,1) &g_t(s,\nu_0,-1)\\
g_{\nu}(s,\nu_0,1) & g_{\nu}(s,\nu_0,-1)
\end{array}\right\vert.
\end{align*}
It follows that $\frac{\mathrm{d}}{\mathrm{d}\mu}(s_{1}^\mu-s_{-1}^\mu)\vert_{\mu=0}= 0$ if and only if $\nabla g(s,\nu_0,1),\ \nabla g(s,\nu_0,-1)$ are parallel, in other words if and only if the solution curves of 
\begin{align*}
g(t,\nu_,1)&=0,\\
g(t,\nu,-1)&=0,
\end{align*}
intersect tangentially at the point $(s,\nu_0)$ in the $(t,\nu)$-plane. This observation indicates that generically $\frac{\mathrm{d}}{\mathrm{d}\mu}(s_{1}^\mu-s_{-1}^\mu)\vert_{\mu=0}\neq 0$.

Noting that
$$\hat{v}_{1,-s}\ee^{\ii z}=\overline{\hat{v}_{-1,s}\ee^{-\ii z}}=R(\hat{v}_{-1,s}\ee^{-\ii z}), \qquad
\hat{v}_{-1,-s}\ee^{-\ii z}=\overline{\hat{v}_{1,s}\ee^{\ii z}}=R(\hat{v}_{1,s}\ee^{\ii z})$$
and $R\hat{f}_1=-\hat{f}_1$, $R\hat{f}_2=\hat{f}_2$, we find that there exists parameters such that hypotheses (H3)--(H6) are satisfied (with (H4)(ii) for $\beta=0$
and (H4)(iii) for $\beta>0$). Hypothesis (H7) follows from the observations that $(G^{\mu_1})^{-1}(\hat{f}_1) = \hat{f}_1$ and
that $\hat{J}^{\mu_1}(\hat{v})$, $\hat{H}^{\mu_1}(\hat{v})$ depend upon $\Gamma$ and $\zeta$ only through $\Gamma_y$
and $\zeta-\gamma\sin\theta_1\Gamma|_{y=0}$, while hypothesis (H8) is verified in Section \ref{Iooss condition} below.

Applying Theorem \ref{LCT} thus yields a family of doubly periodic waves whose periodic cells are small perturbations of the basic periodic cell defined by the
periods $2\pi/\nu_0$, $2\pi/s$ and angle $\theta_2-\theta_1$\linebreak between the periodic directions. The following lemma shows that it is in fact
possible to choose the basic periodic cell (and value of $\beta$)
arbitrarily and adjust the value of $\gamma$ and the angle $\theta_1$ to ensure that Theorem \ref{LCT} applies in this configuration (under the additional hypothesis that $S_k$ does not intersect  $C_\mathrm{dr}$ for $k \neq \pm 1$) .

\begin{lemma}
Choose $\beta$, $s$, $\nu_0$ and $\theta_2-\theta_1$. There exist $\theta_1$ and $\gamma$ such that $S_1$ and $S_{-1}$ each intersect $C_\mathrm{dr}$
in points with coordinates $(\pm s,\nu_0)$ and $(\pm s,-\nu_0)$ in the $(S_0,T)$-coordinate system.
\end{lemma}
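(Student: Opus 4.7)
The plan is to translate the two required intersection conditions into an algebraic system in the unknowns $\gamma$ and $\theta_1$ (with $\theta_2=\theta_1+\delta$ and $\delta:=\theta_2-\theta_1$ prescribed) and solve it in closed form. As a first observation, the symmetry $\DD(-\ell_1,-\ell_2)=\DD(\ell_1,\ell_2)$ of the dispersion relation ensures that the prescribed intersections of $S_{-1}$ at $(\pm s,-\nu_0)$ in the $(S_0,T)$-coordinate system follow automatically from those of $S_1$ at $(\pm s,\nu_0)$, so only the latter need to be arranged.

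Substituting $(k,s)\mapsto(1,\pm s)$ in \eqref{l1eq}--\eqref{l2eq} and using $\cos(\theta_1-\theta_2)=\cos\delta$, one finds that the corresponding points on $S_1$ satisfy $\ell_1=\nu_0\sin\theta_1\pm s\sin\theta_2$ and $\ell_1^2+\ell_2^2=\sigma_\pm^2$, where $\sigma_\pm^2:=s^2\pm 2s\nu_0\cos\delta+\nu_0^2>0$. The conditions $\DD=0$ at the two points therefore read
\begin{equation*}
\gamma^2(\nu_0\sin\theta_1+s\sin\theta_2)^2=B_+,\qquad \gamma^2(\nu_0\sin\theta_1-s\sin\theta_2)^2=B_-,
\end{equation*}
with $B_\pm:=(1+\sigma_\pm^4)\sigma_\pm\tanh(\beta^{-1}\sigma_\pm)>0$ (and the convention $\tanh(\infty)=1$ when $\beta=0$) depending only on the prescribed data. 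Taking signed square roots with $\epsilon_\pm\in\{+1,-1\}$ and then adding and subtracting these two equations yields the linear system
\begin{equation*}
\gamma\nu_0\sin\theta_1=\tfrac12(\epsilon_+\sqrt{B_+}+\epsilon_-\sqrt{B_-}),\qquad \gamma s\sin(\theta_1+\delta)=\tfrac12(\epsilon_+\sqrt{B_+}-\epsilon_-\sqrt{B_-}).
\end{equation*}

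Expanding $\sin(\theta_1+\delta)$ and using $\sin\delta\neq 0$ (which holds because $\theta_1\neq\theta_2$ modulo $\pi$) eliminates $\gamma\sin\theta_1$ and yields a closed-form expression for $\gamma\cos\theta_1$; the sum of the squares of $\gamma\sin\theta_1$ and $\gamma\cos\theta_1$ then determines $\gamma^2$, and the pair $(\sin\theta_1,\cos\theta_1)$ fixes $\theta_1$. The only delicate step, and the single minor obstacle I anticipate, is selecting the signs $(\epsilon_+,\epsilon_-)$ so that $\gamma>0$ and both $\theta_1,\theta_2$ can be normalised to $[0,\pi)$. For generic $\delta$ the choice $(\epsilon_+,\epsilon_-)=(+,+)$ makes $\gamma\nu_0\sin\theta_1=\tfrac12(\sqrt{B_+}+\sqrt{B_-})>0$, whence $\gamma>0$ and $\theta_1\in(0,\pi)$; in the exceptional cases where this places $\theta_2$ outside $[0,\pi)$ one uses $(\epsilon_+,\epsilon_-)=(+,-)$ instead. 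The remaining symmetries $S_k\leftrightarrow S_{-k}$ (from $\theta_1\mapsto\theta_1+\pi$) and $s\mapsto-s$ (from $\theta_2\mapsto\theta_2+\pi$) leave the configuration unchanged and permit the standard range to be attained, completing the proof.
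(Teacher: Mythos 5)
Your proof is correct, and it reduces the problem to exactly the same pair of scalar conditions as the paper, namely $\gamma^2 b_{\pm 1}^2=(1+\sigma_{\pm 1}^4)\sigma_{\pm 1}\tanh(\beta^{-1}\sigma_{\pm 1})$ with the $S_{-1}$ intersections following from the $S_1$ ones by the evenness of $\DD$; the only real difference is how this system is solved. You take signed square roots and eliminate explicitly, recovering $\gamma\sin\theta_1$ and $\gamma\cos\theta_1$ in closed form and hence $\gamma>0$ (guaranteed since $B_\pm>0$) and $\theta_1$; your four sign choices $(\epsilon_+,\epsilon_-)$ are precisely the paper's four solutions. The paper instead argues geometrically: writing $\ell_1=s e_z+\nu_0 e_x$, $\ell_2=-s e_z+\nu_0 e_x$, it seeks $v=\gamma i$ with $(v\cdot\ell_j)^2$ prescribed, notes that the two resulting pairs of parallel lines meet in four points whose distance from the origin depends only on $|\ell_1|$, $|\ell_2|$ and the angle between them --- all fixed by the data $\beta$, $s$, $\nu_0$, $\theta_2-\theta_1$ --- and then rotates $\theta_1$ and $\theta_2$ by a common angle to align one such point with the $x_1$-direction, setting $\gamma=|v|$. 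Your route yields explicit formulae but obliges you to handle signs and normalisation by hand; the paper's invariance argument makes it transparent without computation that $\gamma$ depends only on the prescribed periodic cell. One small remark on your normalisation step: switching to $(\epsilon_+,\epsilon_-)=(+,-)$ generally selects a \emph{different} solution point rather than renormalising the same one; the clean fix when $\theta_2$ falls outside $[0,\pi)$ is the observation you make in your final sentence, that $\theta_j\mapsto\theta_j+\pi$ merely reverses the parametrisation $s\mapsto -s$ of $S_k$ and so leaves the symmetric configuration $\{\pm s\}$ unchanged (the paper glosses over this point entirely). This does not affect the correctness of your argument.
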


\begin{proof}
The lines $S_1$ and $S_{-1}$ intersect $C_\mathrm{dr}$ at respectively $(s, \nu_0)$ and $(s,-\nu_0)$ if and only if 
\begin{align*}
\gamma^2 b_1^2 &= \tanh(\beta^{-1} \sigma_1)(1+\sigma_1^4)\sigma_1, \\
\gamma^2 b_{-1}^2 &= \tanh(\beta^{-1} \sigma_{-1})(1+\sigma_{-1}^4)\sigma_{-1},
\end{align*}
and in this case they also intersect $C_\mathrm{dr}$ at respectively $(-s,-\nu_0)$ and $(-s,\nu_0)$ (see the remarks at the end of Section \ref{pis}).
Let $e_x$, $e_z$ and $i$ be unit vectors in the $x$, $z$ and $x_1$ directions (see Figure \ref{intro - periodic domain}) and set
\begin{equation}
\ell_1= s e_z+\nu_0e_x \qquad \ell_2 =  - s e_z+\nu_0e_x, \label{Definition of lines}
\end{equation}
so that $\sigma_1^2 = |\ell_1|^2$, $\sigma_{-1} = |\ell_2|^2$, $i \cdot \ell_1 = b_1$, $i \cdot \ell_2 = b_{-1}$; in this notation our task is to find a solution of the equations
\begin{align}
(v \cdot \ell_1)^2 &= \tanh(\beta^{-1} |\ell_1|)(1+ |\ell_1|^4) |\ell_1|, \label{Lines 1}\\
(v \cdot \ell_2)^2 &= \tanh(\beta^{-1} |\ell_2|)(1+ |\ell_2|^4) |\ell_2| \label{Lines 2},
\end{align}
of the form $v = \gamma i$. Let $\psi$ denote the angle between $\ell_1$ and $\ell_2$; equation \eqref{Definition of lines} shows that rotating $e_x$ and $e_z$ through the same angle $\theta$,
that is changing $\theta_1$ and $\theta_2$ by $\theta$, causes $\ell_1$ and $\ell_2$ to rotate through $\theta$ and thus does not change $\psi$.

Observe that the equations
\begin{align*}
v \cdot n_1 = \pm \alpha_1, \\
v \cdot n_2 = \pm \alpha_2,
\end{align*}
where $n_1$, $n_2 \in {\mathbb R}^2$ are linearly independent and $\alpha_1$, $\alpha_2 > 0$, represent two pairs of parallel lines which intersect in four points. Each of these points $v$ satisfies
$$|v| = \frac{|\alpha_2 n_1 \pm \alpha_1 n_2|}{|n_1||n_2|},$$
so that $|v|$ depends only upon $\alpha_1$, $\alpha_2$, $|n_1|$, $|n_2|$ and the angle between $n_1$ and $n_2$. Using this result we find that the solution set to equations
\eqref{Lines 1}, \eqref{Lines 2} consists of four points. Let $v$ be one of these points, and note that
$|v|$ depends only upon $|\ell_1|$, $|\ell_2|$ and $\psi$
(since the right-hand sides of \eqref{Lines 1}, \eqref{Lines 2} depend only upon $|\ell_1|$ and $|\ell_2|$.) By rotating $\ell_1$ and $\ell_2$ through a suitably chosen angle $\theta$, that is changing $\theta_1$ and $\theta_2$ by $\theta$, we can arrange that $v=|v|i$ and hence $v=\gamma i$ by setting $\gamma=|v|$.

\end{proof}

\subsection{Verification of hypothesis (H8)} \label{Iooss condition}
It remains to verify that hypothesis (H8) is satisfied when $\beta=0$. The condition is that
the equation
\begin{equation}\label{solvability_nonlinear_1}
Lv^\dag=\hat{J}^0(0)^{-1}[\hat{J}^{\mu_1}(\hat{v})\big((\kappa_0+\mu_2)\hat{v}_\tau-\hat{v}_{\mathrm{H}}^{\mu_1}(\hat{v})\big)
+\hat{J}^0(0)L\hat{v}]_0,
\end{equation}
has a unique solution $v^\dag \in X$ which depends smoothly upon $(\hat{v},\mu_1,\mu_2) \in {\mathcal U} \times \Lambda_1 \times
\Lambda_2$.
Since
\begin{equation*}
\hat{J}^0(0)^{-1}[\hat{J}^0(0)L\hat{v}]_{0}=L[\hat{v}]_0
\end{equation*}
one can rewrite equation \eqref{solvability_nonlinear_1} as
\begin{equation}\label{solvability_nonlinear_2}
L\big(v^\dag-[\hat{v}]_0\big)=\hat{J}^0(0)^{-1}[\hat{J}^{\mu_1}(\hat{v})\big((\kappa_0+\mu_2)\hat{v}_\tau-\hat{v}_{\mathrm{H}}^{\mu_1}(\hat{v})\big)]_0.
\end{equation}
In view of equation \eqref{solvability_nonlinear_2} and Lemma \ref{lemma_spectrum}(iii) our task is to show that
\begin{equation}
(\hat{v},\mu_1,\mu_2) \mapsto \int_{-\infty}^y \bl[\hat{w}_\Gamma]_0\br_0, \qquad
(\hat{v},\mu_1,\mu_2) \mapsto \int_{-\infty}^y \int_{-\infty}^t \bl[\hat{w}_\Gamma]_0\br_0 \ds\dt,
\label{Integral mappings}
\end{equation}
where
$$\hat{w}=\hat{J}^{\mu_1}(\hat{v})\big((\kappa_0+\mu_2)\hat{v}_\tau-\hat{v}_{\mathrm{H}}^{\mu_1}(\hat{v})\big),$$
are smooth mappings ${\mathcal U} \times \Lambda_1 \times \Lambda_2 \rightarrow L^2(-\infty,0)$ and that
\begin{equation}
\int_{-\infty}^0\bl[\hat{w}_\Gamma]_0\br_0\ \mathrm{d}y=-\bl[\hat{w}_\zeta]_0\br_0\gamma\sin\theta_2\label{comp_hydroelastic}
\end{equation}
for each $(\hat{v},\mu_1,\mu_2) \in {\mathcal U} \times \Lambda_1 \times \Lambda_2$. Here we use the symbols
$[\cdot]_0$ and $\bl \cdot\br_0$ to denote the projections onto the $0$th Fourier modes in the $\tau$ and $z$ variables respectively.
This task is accomplished in Lemma \ref{solvability_hat_w} below with the help of the following auxiliary result.

\begin{proposition}\label{solvability_v_prop}
The formulae
$$
(v,\mu_1) \mapsto \int_{-\infty}^y \bl[(v_{\mathrm{H}}^{\mu_1}(v))_\Psi]_0\br_0 \dt, \qquad (v,\mu_1) \mapsto \int_{-\infty}^y \int_{-\infty}^t \bl[(v_{\mathrm{H}}^{\mu_1}(v))_\Psi]_0\br_0 \ds\dt
$$
define analytic mapppings ${\mathcal U} \times \Lambda_1 \rightarrow L^2(-\infty,0)$ and
$$\int_{-\infty}^0\bl[(v_{\mathrm{H}}^{\mu_1}(v))_\Psi]_0\br_0 \dy=\bl[(v_{\mathrm{H}}^{\mu_1}(v))_\eta]_0\br_0\gamma\sin \theta_2$$
for each $(v,\mu_1) \in {\mathcal U} \times \Lambda_1$.
\end{proposition}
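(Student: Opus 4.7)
The plan is to exploit the fact that $(v_{\mathrm{H}}^{\mu_1}(v))_\Psi = -\delta H/\delta\Phi$ is given explicitly by equation \eqref{Ham eq 6}, and in particular that every non-$z$-derivative term in that expression is itself a $\partial_y$-derivative. First I would project onto the $z$-average $\bl\cdot\br_0$, which annihilates the pure $\partial_z$ contributions, leaving
$$\bl(v_{\mathrm{H}}^{\mu_1}(v))_\Psi\br_0 = \partial_y G(y,\tau;v,\mu_1),$$
where
\begin{align*}
G &= -\bl K_2(\eta)\Phi_y\br_0 + \bl K_1(\eta)\Psi\rho\br_0 \\
&\quad + \bl\tfrac{K_1(\eta)}{K_2(\eta)}(\nu_0+\mu_1)^2\sin^2(\theta_1-\theta_2)(\Phi_z-K_1(\eta)\eta_z\Phi_y)\eta_z\br_0 \\
&\quad + (\nu_0+\mu_1)\cos(\theta_1-\theta_2)\bl K_1(\eta)\eta_z\Psi\br_0.
\end{align*}
The required antiderivatives are then $[G]_0(y)$ and $\int_{-\infty}^y[G]_0(t)\dt$ (with the constant of integration fixed by the behaviour at $-\infty$), and the task reduces to showing that these lie in $L^2(-\infty,0)$ and depend analytically upon $(v,\mu_1)$, and that $[G]_0(0)$ has the required boundary value.

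The decisive observation for $\beta=0$ is that $K_1(\eta)=e^y/(1+e^y\eta)$ and $K_2(\eta)-1=-e^y\eta/(1+e^y\eta)$ are both of order $e^y$ (uniformly for $\eta$ in a neighbourhood of $0$ in $H^3_\mathrm{per}(S)$). Consequently every term of $G$ carrying a $K_1(\eta)$ factor decays exponentially as $y\to-\infty$, and its iterated antiderivatives automatically belong to $L^2(-\infty,0)$. For the remaining term I would integrate by parts once:
$$\int_{-\infty}^y\bl K_2(\eta)\Phi_y\br_0\dt = \bl K_2(\eta)\Phi\br_0(y) - \int_{-\infty}^y\bl(K_2(\eta))_y\Phi\br_0\dt;$$
the first summand lies in $H^1(-\infty,0)\subset L^2(-\infty,0)$ by the regularity $\Phi\in H^2_\mathrm{per}(\Sigma)$ and $\eta\in H^3_\mathrm{per}(S)$, while $(K_2(\eta))_y=O(e^y)$ makes the second summand exponentially small. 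A further integration by parts controls the double integral in the same fashion. Analyticity of the resulting mappings into $L^2(-\infty,0)$ follows from Proposition \ref{Analytic stuff} together with the boundedness of the relevant bilinear multiplications on the Sobolev spaces in question and the analyticity of $\eta\mapsto e^{-y}(K_2(\eta)-1)$, $\eta\mapsto e^{-y}K_1(\eta)$ and $\eta\mapsto e^{-y}(K_2(\eta))_y$ as mappings into appropriate function spaces.

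For the compatibility identity I would evaluate $G$ at $y=0$ using the boundary condition \eqref{BC 2}, which reads
\begin{align*}
K_2(\eta)\Phi_y\big|_{y=0} &= K_1(\eta)\Psi\rho+\tfrac{K_1(\eta)}{K_2(\eta)}(\nu_0+\mu_1)^2\sin^2(\theta_1-\theta_2)(\Phi_z-K_1(\eta)\eta_z\Phi_y)\eta_z \\
&\quad +(\nu_0+\mu_1)\cos(\theta_1-\theta_2)K_1(\eta)\eta_z\Psi-\gamma(\rho\sin\theta_2+(\nu_0+\mu_1)\eta_z\sin\theta_1).
\end{align*}
Substituting this into the explicit expression for $G(0)$ produces a four-term cancellation which leaves
$$G(0)=\bl\gamma(\rho\sin\theta_2+(\nu_0+\mu_1)\eta_z\sin\theta_1)\br_0=\gamma\sin\theta_2\,\bl\rho\br_0,$$
since $\bl\eta_z\br_0=0$ by $2\pi$-periodicity in $z$. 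Applying $[\cdot]_0$ and recalling that $(v_{\mathrm{H}}^{\mu_1}(v))_\eta=\rho$ then yields
$$\int_{-\infty}^0\bl[(v_{\mathrm{H}}^{\mu_1}(v))_\Psi]_0\br_0\dy=\gamma\sin\theta_2\,\bl[(v_{\mathrm{H}}^{\mu_1}(v))_\eta]_0\br_0,$$
as required.

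The main technical obstacle is the $L^2(-\infty,0)$ control of the \emph{double} integral of $\bl K_2(\eta)\Phi_y\br_0$: in contrast to the $K_1(\eta)$-terms this component has no intrinsic exponential decay at $-\infty$, so the estimate rests delicately on combining the $H^2_\mathrm{per}(\Sigma)$ regularity of $\Phi$ with the exponential smallness of $(K_2(\eta)-1)$ and $(K_2(\eta))_y$ via iterated integration by parts. Everything else — the $z$-averaging, the boundary cancellation and the analyticity — is essentially algebraic once this estimate is in place.
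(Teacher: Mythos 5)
Your proposal is correct and follows essentially the same route as the paper: both proofs observe that the $z$-average of the $\Psi$-component of $v_{\mathrm{H}}^{\mu_1}$ is an exact $y$-derivative of the very same expression, control its iterated antiderivatives by isolating the leading $\Phi_y$ contribution (whose antiderivative is $\Phi$ itself) from the remaining terms, which carry an explicit $\ee^y$ factor and are handled by the boundedness of $u \mapsto \int_{-\infty}^y u(t)\ee^t\,\mathrm{d}t$ on $L^2(-\infty,0)$, and obtain the compatibility identity by evaluating the antiderivative at $y=0$ via the boundary condition \eqref{BC 2} together with $\langle\!\langle \eta_z\rangle\!\rangle_0=0$ and \eqref{Ham eq 1}. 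The only cosmetic difference is that you integrate $K_2(\eta)\Phi_y$ by parts using $(K_2(\eta))_y=O(\ee^y)$, whereas the paper extracts $\Phi_y$ directly via the identity $1-K_1(\eta)\eta=K_2(\eta)$; these manipulations are equivalent.
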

\begin{proof}
It follows from \eqref{Ham eq 6} that
$$\bl[(v_{\mathrm{H}}^{\mu_1}(v))_\Psi]_0\br_0=\bl[(g_0^{\mu_1}(v))_y]_0\br_0,$$
where\pagebreak
\begin{align*}
g_0^{\mu_1}(v)&=\Phi_y-K_1(\eta)\eta\Phi_y-K_1(\eta)\Psi\rho
-\frac{K_1(\eta)}{K_2(\eta)}(\nu_0+\mu_1)^2\sin^2(\theta_1-\theta_2)(\Phi_z-K_1(\eta)\eta_z\Phi_y)\eta_z \\
& \qquad\mbox{}-(\nu_0+\mu_1)\cos(\theta_1-\theta_2)K_1(\eta)\eta_z\Psi \\
&=\Phi_y-\Big(K_2(\eta)\eta\Phi_y-K_2(\eta)\Psi\rho
-(\nu_0+\mu_1)^2\sin^2(\theta_1-\theta_2)(\Phi_z-K_1(\eta)\eta_z\Phi_y)\eta_z \\
& \qquad\qquad\quad\mbox{}-(\nu_0+\mu_1)\cos(\theta_1-\theta_2)K_2(\eta)\eta_z\Psi\Big)\ee^y
\end{align*}
and $v=(\eta,\rho,\Phi,\zeta,\xi,\Psi)$.
Observing that $(v,\mu_1) \mapsto g_0^{\mu_1}(v)$ maps $U \times \Lambda_1$ analytically into $L^2(-\infty,0)$ (see Proposition \ref{Analytic stuff}) and the same is true
of $(v,\mu_1) \mapsto \int_{-\infty}^y g_0^{\mu_1}(v)\dt$ because\linebreak
$u \mapsto \int_{-\infty}^y u(t) \ee^t \dt$ belongs to ${\mathcal L}(L^2(-\infty,0))$, we conclude that
$(v,\mu_1) \mapsto \int_{-\infty}^y \bl[(v_{\mathrm{H}}^{\mu_1}(v))_\Psi]_0\br_0 \dt$ and $(v,\mu_1) \mapsto \int_{-\infty}^y \int_{-\infty}^t \bl[(v_{\mathrm{H}}^{\mu_1}(v))_\Psi]_0\br_0 \ds\dt$
map ${\mathcal U} \times \Lambda_1$ analytically into $L^2(-\infty,0)$.
Finally
$$\int_{-\infty}^0\bl[(v_{\mathrm{H}}^{\mu_1}(v))_\Psi]_0\br_0 \dy=\bl[\rho]_0\br_0\gamma\sin \theta_2=\bl[(v_{\mathrm{H}}^{\mu_1}(v))_\eta]_0\br_0\gamma\sin \theta_2$$
because of \eqref{BC 1}, \eqref{BC 2} and \eqref{Ham eq 1}.
\end{proof}

\begin{lemma} \label{solvability_hat_w}
The formulae \eqref{Integral mappings} define analytic mappings ${\mathcal U} \times \Lambda_1 \times \Lambda_2 \rightarrow L^2(-\infty,0)$
and the formula \eqref{comp_hydroelastic} is satisfied for each $(\hat{v},\mu_1,\mu_2) \in {\mathcal U} \times \Lambda_1 \times \Lambda_2$.
\end{lemma}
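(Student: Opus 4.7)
The plan is to reduce the statement for $\hat{w}$ to the result of Proposition \ref{solvability_v_prop} via the change of variable $v = K^{\mu_1}(\hat{v})$ introduced in Section \ref{Formulation}. Using the identities $\hat{v}_\tau = \widetilde{\mathrm{d}G}\hphantom{}^{\mu_1}[v] v_\tau$, $\hat{v}_{\mathrm{H}}^{\mu_1}(\hat{v}) = \widetilde{\mathrm{d}G}\hphantom{}^{\mu_1}[v] v_{\mathrm{H}}^{\mu_1}(v)$ and $\hat{J}^{\mu_1}(\hat{v}) = \widetilde{\mathrm{d}K}\hphantom{}^{\mu_1}[\hat{v}]^\ast J \widetilde{\mathrm{d}K}\hphantom{}^{\mu_1}[\hat{v}]$ together with $\widetilde{\mathrm{d}K}\hphantom{}^{\mu_1}[\hat{v}]\,\widetilde{\mathrm{d}G}\hphantom{}^{\mu_1}[v] = I$, one rewrites
\[
\hat{w} = \bigl(\widetilde{\mathrm{d}G}\hphantom{}^{\mu_1}[v]^\ast\bigr)^{-1} J A, \qquad A := (\kappa+\mu_2)v_\tau - v_{\mathrm{H}}^{\mu_1}(v).
\]
Consulting the explicit formulae in Proposition \ref{adjoint_proposition}, the crucial observation is that $F^{\mu_1}$ depends on neither $\zeta$ nor $\xi$, so only the third column of the matrix of $\widetilde{\mathrm{d}G}\hphantom{}^{\mu_1}[v]^\ast$ carries non-trivial off-diagonal entries. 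This gives $\hat{w}_\zeta = (JA)_\zeta = A_\eta$ directly, and the scalar identity
\[
\hat{w}_\Gamma + \MM[v]\hat{w}_\Gamma = -A_\Psi,
\]
where $\MM[v]\chi := -\partial_y\bigl[c_1(v)\Delta^{-1}\chi_y\bigr] - \partial_z\bigl[c_2(v)\Delta^{-1}\chi_y\bigr]$ and $c_1(v)$, $c_2(v)$ are the coefficients displayed in the $\Gamma$-component of Proposition \ref{adjoint_proposition}.

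Applying $[\cdot]_0$ annihilates the $\tau$-derivative terms in $A$, yielding $[\hat{w}_\zeta]_0 = -[(v_{\mathrm{H}}^{\mu_1}(v))_\eta]_0$ and $[A_\Psi]_0 = -[(v_{\mathrm{H}}^{\mu_1}(v))_\Psi]_0$. To obtain smoothness of the integral mappings in \eqref{Integral mappings}, I would integrate the identity for $\hat{w}_\Gamma$ from $-\infty$ to $y$, apply $\bl\cdot\br_0$ (which kills the $\partial_z$-part of $\MM[v]\hat{w}_\Gamma$), and use the anti-derivative representation $(v_{\mathrm{H}}^{\mu_1}(v))_\Psi = \partial_y g_0^{\mu_1}(v)$ established in the proof of Proposition \ref{solvability_v_prop}. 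This produces an explicit formula for $\int_{-\infty}^y \bl[\hat{w}_\Gamma]_0\br_0 \,dt$ as a sum of quantities that depend analytically on $(\hat{v},\mu_1,\mu_2)$ with values in $L^2(-\infty,0)$ by virtue of Proposition \ref{Analytic stuff} and Lemma \ref{Properties of vH and F}; the double-integral map is handled by integrating once more.

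Finally, the compatibility identity \eqref{comp_hydroelastic} follows by integrating $(I + \MM[v])\hat{w}_\Gamma = -A_\Psi$ over $y \in (-\infty,0)$ and applying $\bl\cdot\br_0$ and $[\cdot]_0$. The $\partial_z$-part of $\MM[v]\hat{w}_\Gamma$ vanishes under $\bl\cdot\br_0$, while the $\partial_y$-part reduces to the boundary values of $c_1(v)\Delta^{-1}(\hat{w}_{\Gamma,y})$ at $y = 0$ and $y = -\infty$, both of which vanish --- at $y = 0$ because $\Delta^{-1}$ satisfies a Dirichlet boundary condition there, and at $y = -\infty$ because $c_1(v)$ carries factors $K_1(\eta)$ and $K_3 = \ee^y$ (in the $\beta = 0$ case) which decay in this limit. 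Combining this with Proposition \ref{solvability_v_prop}(ii) and the formula $[\hat{w}_\zeta]_0 = -[(v_{\mathrm{H}}^{\mu_1}(v))_\eta]_0$ produces \eqref{comp_hydroelastic}. The main obstacle is that $\MM[v]$ depends on $\tau$ through $v$, so $[\cdot]_0$ does not commute with $(I + \MM[v])^{-1}$; this is circumvented by performing the $y$-integration first and exploiting the Dirichlet structure of $\Delta^{-1}$, which makes the $\tau$-dependence of the coefficients irrelevant in the crucial boundary calculation.
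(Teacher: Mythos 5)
Your proposal is correct and follows essentially the same route as the paper: both start from the identity $\widetilde{\mathrm{d}G}\hphantom{}^{\mu_1}[v]^\ast(\hat{w})=J\big((\kappa+\mu_2)v_\tau-v_{\mathrm{H}}^{\mu_1}(v)\big)$, read off the $\zeta$- and $\Gamma$-components using the explicit adjoint formulae (whose off-diagonal entries involve only $\Delta^{-1}(\hat{w}_{\Gamma,y})$), kill the $\tau$- and $z$-derivative terms with the zeroth Fourier projections, reduce to Proposition \ref{solvability_v_prop} via the $\ee^y$-weighted antiderivative representation, and obtain \eqref{comp_hydroelastic} from the vanishing of the boundary term $\Delta^{-1}(\hat{w}_{\Gamma,y})\big|_{y=0}=0$. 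The only cosmetic difference is that the paper never needs to invert $I+\MM[v]$; it applies the projections directly to the un-inverted identity, which is exactly the workaround you describe.
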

\begin{proof}
We first note that
\begin{equation}\label{Gamma_inverse1}
\widetilde{\mathrm{d}G}\vphantom{}^{\mu_1}[v]^\ast(\hat{w})=J\big((\kappa_0+\mu_2)v_\tau-v_{\mathrm{H}}^{\mu_1}(v)\big),
\end{equation}
where $v=K^{\mu_1}(\hat{v})$, and using the explicit formulae for $\widetilde{\mathrm{d}G}\vphantom{}^{\mu_1}[v]^\ast$ appearing in the proof of Proposition \ref{adjoint_proposition}, we find that the $\zeta$- and
$\Gamma$-components of equation \eqref{Gamma_inverse1} are
\begin{align}
\hat{w}_\zeta &= (\kappa_0+\mu_2)\frac{\mathrm{d}}{\mathrm{d}\tau}v_\eta-\left(v_{\mathrm{H}}^{\mu_1}(w)\right)_\eta, \label{zeta_component} \\
\hat{w}_\Gamma-(g_1^{\mu_1}(v,\hat{w}))_y-(g_2(v,\hat{w}))_z&=-(\kappa_0+\mu_2)\frac{\mathrm{d}}{\mathrm{d}\tau}v_\Psi+\left(v_{\mathrm{H}}^{\mu_1}(v)\right)_\Psi, \label{Gamma_inverse2}
\end{align}
where 
\begin{align*}
g_1^{\mu_1}(v,\hat{w})&=\bigg(K_2(v_\eta)K_3v_\eta-\frac{K_1^2(v_\eta)}{K_2(v_\eta)}(\nu_0+\mu_1)^2\sin^2(\theta_1-\theta_2)(v_\eta)_z^2\bigg)\Delta^{-1}((\hat{w}_\Gamma)_y) \\
&=\bigg(K_2(v_\eta)v_\eta-K_1(v_\eta)(\nu_0+\mu_1)^2\sin^2(\theta_1-\theta_2)(v_\eta)_z^2\bigg)\ee^y\Delta^{-1}((\hat{w}_\Gamma)_y), \\
g_2(v,\hat{w})&=\bigg(\frac{K_1(v_\eta)}{K_2(v_\eta)}(\nu_0+\mu_1)^2\sin^2(\theta_1-\theta_2)(v_\eta)_z\bigg)\Delta^{-1}((\hat{w}_\Gamma)_y);
\end{align*}
note in particular that $(\hat{v},\mu_1) \mapsto g_1^{\mu_1}(v,\hat{w})$ maps $\hat{U} \times \Lambda_1$ analytically into $L^2(-\infty,0)$ (see Proposition \ref{Analytic stuff}) and the same is true of
$(\hat{v},\mu_1) \mapsto \int_{-\infty}^y g_1^{\mu_1}(v,\hat{w})\dt$ because
$u \mapsto \int_{-\infty}^y u(t) \ee^t \dt$ belongs to ${\mathcal L}(L^2(-\infty,0))$.

Equation \eqref{Gamma_inverse2} implies that 
\begin{equation*}
\bl[\hat{w}_\Gamma]_0\br_0=\bl[ (g_1^{\mu_1}(v,\hat{w}))_y]_0\br_0+\bl[\left(v_{\mathrm{H}}^{\mu_1}(v)\right)_\Psi]_0\br_0,
\end{equation*}
and it follows from this identity and Proposition \ref{solvability_v_prop} that $(\hat{v},\mu_1,\mu_2) \mapsto \int_{-\infty}^y \bl[\hat{w}_\Gamma]_0\br_0\dt$ and\linebreak
$(\hat{v},\mu_1,\mu_2) \mapsto \int_{-\infty}^y \int_{-\infty}^t \bl[\hat{w}_\Gamma]_0\br_0 \ds\dt$ map ${\mathcal U} \times \Lambda_1 \times \Lambda_2$ analytically into $L^2(-\infty,0)$.
Furthermore, the calculation
\begin{align*}
\int_{-\infty}^0\bl[\hat{w}_\Gamma]_0\br_0\dy&=\underbrace{\big[\bl[ g_1^{\mu_1}(w,\hat{w})]_0\br_0\big]_{-\infty}^0}_{\displaystyle =0}+\int_{-\infty}^0\bl[\left(v_{\mathrm{H}}^{\mu_1}(w)\right)_\Psi]_0\br_0\dy\\
&=\bl\left(v_{\mathrm{H}}^{\mu_1}(w)\right)_\eta]_0\br_0\gamma\sin\theta_2\\
&=-\bl[\hat{w}_\zeta]_0\br_0\gamma\sin\theta_2,
\end{align*}
shows that \eqref{comp_hydroelastic} is also satisfied; here we have used Proposition \ref{solvability_v_prop} and the facts that
$$\bl[\hat{w}_\zeta]_0\br_0=-\bl[\left(v_{\mathrm{H}}^{\mu_1}(w)\right)_\eta]_0\br_0$$
(see equation \eqref{zeta_component}) and
$\Delta^{-1}((\hat{w}_\Gamma)_y)\big|_{y=0}=0$.
\end{proof}
\section{Declarations}
\noindent \textbf{Funding}\\
Part of this work was carried out while D.N was supported by a grant from the Knut and Alice Wallenberg foundation.\newline

\noindent \textbf{Competing interests}\\
The authors have no competing interests to declare that are relevant to the content of this article.
\appendix

\bibliographystyle{standard}
\bibliography{mdg}

\end{document}